\documentclass[11pt,a4paper,twoside]{amsart}
\usepackage[activeacute,english]{babel}
\usepackage[latin1]{inputenc}
\usepackage{amsfonts}
\usepackage{amsmath,amsthm}
\usepackage{amssymb}
\usepackage{graphics}
\usepackage{pdfsync}
 
\textwidth15.4cm
\evensidemargin.6cm
\oddsidemargin.6cm


\newcommand{\HH}{{\mathcal H}}
\newcommand{\OO}{{\mathcal O}}
\newcommand{\VV}{{\mathcal V}}
\newcommand{\FF}{{\mathcal F}}
\newcommand{\GG}{{\mathcal G}}
\newcommand{\CC}{{\mathcal C}}
\newcommand{\II}{{\mathcal I}}

\newcommand{\TT}{{\mathcal T}}
\newcommand{\LL}{{\mathcal L}}
\newcommand{\SSS}{{\mathcal S}}

\newcommand{\RR}{{\mathcal R}}
\newcommand{\DD}{{\mathcal D}}

\newcommand{\N}{{\mathbb N}}
\newcommand{\Z}{{\mathbb Z}}
\newcommand{\R}{{\mathbb R}}
\newcommand{\C}{{\mathbb C}}
\newcommand{\wit}{\widetilde}

\newcommand{\supp}{{\operatorname{supp}}}
\newcommand{\diam}{{\operatorname{diam}}}

\newcommand{\Lip}{{\operatorname{Lip}}}
\newcommand{\dist}{{\operatorname{dist}}}
\newtheorem{teo}{Theorem}[section]

\newtheorem{lema}[teo]{Lemma}

\newtheorem{prob}[teo]{Problem}

\newtheorem{claim}[teo]{Claim}
{\theoremstyle{remark} \newtheorem{remark}[teo]{Remark}}
\allowdisplaybreaks[1]

\newtheorem*{teo*}{Theorem}


\makeatletter
\@addtoreset {equation}{section}
\makeatother

\title[Variation for singular integrals on Lipschitz graphs]
{Variation and oscillation for\\singular integrals with odd
kernel\\on Lipschitz graphs}
\author[A. MAS AND X. TOLSA]{ALBERT MAS\; AND\; XAVIER TOLSA}
\date{March, 2011}
\subjclass[2010]{Primary 42B20, 42B25.} 
\keywords{$\rho$-variation,
oscillation, Calder\'{o}n-Zygmund singular integrals.}
\thanks{Both authors are partially supported by grants
2009SGR-000420 (Generalitat de
Catalunya) and MTM2010-16232 (Spain). Albert Mas is also
supported by grant AP2006-02416 (FPU program, Spain).}
\address{Albert Mas. Departament de Matem\`atiques, Universitat
Aut\`onoma de Bar\-ce\-lo\-na, 08193 Catalonia, Spain} \email{amblesa@mat.uab.cat}
\address{Xavier Tolsa. Instituci\'{o} Catalana de Recerca
i Estudis Avan\c{c}ats (ICREA) and Departament de
Ma\-te\-m\`a\-ti\-ques, Universitat Aut\`onoma de Bar\-ce\-lo\-na, 08193
Catalonia} \email{xtolsa@mat.uab.cat}


\begin{document}

\begin{abstract}
We prove that, for $\rho>2$, the $\rho$-variation and oscillation for the smooth truncations of the Cauchy
transform on Lipschitz graphs are bounded in $L^p$ for $1<p<\infty$.
The analogous result holds for the $n$-dimensional
Riesz transform on $n$-dimensional Lipschitz graphs, as well as for
other singular integral operators with odd kernel. In particular,
our results strengthen the classical theorem on the $L^2$ boundedness of the Cauchy transform on Lipschitz graphs by Coifman, McIntosh, and Meyer.
\end{abstract}

\maketitle

\section{Introduction}

The $\rho$-variation and oscillation for martingales and some
families of operators have been studied in many recent papers on
probability, ergodic theory, and harmonic analysis (see
\cite{Lepingle}, \cite{Bourgain}, \cite{JKRW-ergodic},
\cite{CJRW-Hilbert}, and \cite{JSW}, for example). The purpose of
this paper is to establish some new results concerning the
$\rho$-variation and oscillation for families of singular integral
operators defined on Lipschitz graphs. In particular, our results
include the $L^p$ boundedness of the $\rho$-variation and the
oscillation for the smooth truncations of the Cauchy transform and the $n$-dimensional Riesz
transform on Lipschitz graphs, for $1<p<\infty$ and $\rho>2$.

Given a Borel measure $\mu$ in $\R^d$, one defines the
$n$-dimensional Riesz transform of a function $f\in L^1(\mu)$ by
$R^\mu f(x)=\lim_{\epsilon\searrow0}R^\mu_\epsilon f(x)$
(whenever the limit exists), where
$$R^\mu_\epsilon f(x)=\int_{|x-y|>\epsilon}\frac{x-y}
{|x-y|^{n+1}}\,f(y)\,d\mu(y),\qquad x\in\R^d.$$
When $d=2$ (i.e., $\mu$ is a Borel measure in $\C$), one defines the
Cauchy transform of  $f\in L^1(\mu)$ by $C^\mu
f(x)=\lim_{\epsilon\searrow0}C^\mu_\epsilon f(x)$ (whenever the
limit exists), where
$$C^\mu_\epsilon f(x)=\int_{|x-y|>\epsilon}\frac{f(y)}
{x-y}\,d\mu(y),\qquad x\in\C.$$
To avoid the problem of existence of the preceding limits, it is useful to consider the maximal
operators $R^\mu_* f(x)=\sup_{\epsilon>0}|R^\mu_\epsilon f(x)|$
and $C^\mu_* f(x)=\sup_{\epsilon>0}|C^\mu_\epsilon f(x)|$.

The Cauchy and Riesz transforms are two very important examples
of singular integral operators with a Calder\'{o}n-Zygmund kernel.
The kernels $K:\R^d\setminus\{0\}\to\R$ that we consider in this
paper satisfy
\begin{equation}\label{eq333}
|K(x)|\leq \frac{C}{|x|^{n}},\quad|\partial_{x^i}K(x)|\leq
\frac{C}{|x|^{n+1}}\quad\text{and}\quad|\partial_{x^i}\partial_{x^j}K(x)|\leq
\frac{C}{|x|^{n+2}},
\end{equation}
for all $1\leq i,j\leq d$ and
$x=(x^1,\ldots,x^d)\in\R^d\setminus\{0\}$, where $0<n<d$ is some
integer and $C>0$ is some constant; and moreover $K(-x)=-K(x)$ for
all $x\neq0$ (i.e. $K$ is odd). Notice that the $n$-dimensional
Riesz transform corresponds to the vector kernel
$(x^1,\ldots,x^d)/|x|^{n+1}$, and the Cauchy transform
to $(x^1,-x^2)/|x|^{2}$ (so, we may consider $K$
to be any scalar component of these vector kernels).

Given an odd kernel $K$ satisfying \eqref{eq333} and a finite Borel
measure $\mu$ in $\R^d$, for each $\epsilon>0$, we consider the $\epsilon$-truncated operator
$$T_\epsilon \mu(x)=\int_{|x-y|>\epsilon}K(x-y)\,d\mu(y),\qquad x\in\R^d,$$
and then we set $T\mu(x)=\lim_{\epsilon\searrow0}T_\epsilon \mu(x)$
whenever the limit makes sense, and
$T_*\mu(x)=\sup_{\epsilon>0}|T_\epsilon \mu(x)|$. Finally, given
$f\in L^1(\mu)$, we define $T^\mu_\epsilon
f(x):=T_\epsilon(f\mu)(x)$, $T^\mu f(x):=T(f\mu)(x)$ and $T_*^\mu
f(x):=T_*(f\mu)(x)$. Thus, for a suitable choice of $K$, the operator $T^\mu$ coincides with the Cauchy or Riesz transforms.

Besides the operator $T_\epsilon$ defined above, one can consider another $\epsilon$-truncated
variant that we proceed to define. First we need some additional notation.
 Given $x=(x^1,\ldots,x^d)\in\R^d,$ we use the
notation $\widetilde x:=(x^1,\ldots,x^n)\in\R^n$.
Let $\varphi_\R:[0,\infty)\to[0,\infty)$ be a non decreasing $\CC^2$
function such that
$\chi_{[3\sqrt{n},\infty)}\leq\varphi_\R\leq\chi_{[2.1\sqrt{n},\infty)}$
(the numbers $3\sqrt{n}$ and $2.1\sqrt{n}$ are chosen just for definiteness and they are not important).
Given $\epsilon>0$ and $x\in\R^d$, we denote
\begin{gather*}
\varphi_{\epsilon}(x):=\varphi_\R(|\widetilde x|/\epsilon)
\quad\text{and}\quad\varphi:=\{\varphi_{\epsilon}\}_{\epsilon>0}.
\end{gather*}

Given $K$ as above, $x\in\R^{d}$, $0<\epsilon$, and a finite Borel measure $\mu$,
we set
\begin{equation*}
(K\varphi_{\epsilon}*\mu)(x) :=\int\varphi_{\epsilon}(x-y)K(x-y)\,d\mu(y).
\end{equation*}
 We also denote $(K\varphi*\mu)(x):=\{(K\varphi_\epsilon*\mu)(x)\}_{\epsilon>0}$.
Finally, given
$f\in L^1(\mu)$, we define $T^\mu_{\varphi_\epsilon}f(x):=(K\varphi_{\epsilon}*(f\mu))(x)$, $T^\mu_{\varphi}f(x):=\lim_{\epsilon\to0}T^\mu_{\varphi_\epsilon}f(x)$ (whenever the limit makes sense),
$T^\mu_{\varphi_*}f(x):=\sup_{\epsilon>0}|T^\mu_{\varphi_\epsilon}f(x)|$, and $\TT^\mu_{\varphi}f(x):=\{T^\mu_{\varphi_\epsilon}f(x)\}_{\epsilon>0}$.

Let $\II$ be a subset of $\R$ (in this paper, we will always have
$\II=(0,\infty)$ or $\II=\Z$),  and let
$\mathcal{F}:=\{F_\epsilon\}_{\epsilon\in\II}$ be a family of
functions defined on $\R^d$. Given  $\rho>0$, the $\rho$-{\em
variation} of $\FF$ at $x\in\R^d$ is defined by
\begin{equation*}
\VV_{\rho}(\FF)(x):=\sup_{\{\epsilon_{m}\}}\bigg(\sum_{m\in\Z}
|F_{\epsilon_{m+1}}(x)-F_{\epsilon_{m}}(x)|^{\rho}\bigg)^{1/\rho},
\end{equation*}
where the pointwise supremum is taken over all decreasing
sequences $\{\epsilon_{m}\}_{m\in\Z}\subset\II$. Fix a decreasing
sequence $\{r_{m}\}_{m\in\Z}\subset\II$. The {\em oscillation} of $\FF$
at $x\in\R^d$ is defined by
\begin{equation*}
\OO(\FF)(x):=\sup_{\{\epsilon_m\},\{\delta_{m}\}}\bigg(\sum_{m\in\Z}
|F_{\epsilon_m}(x)-F_{\delta_m}(x)|^{2}\bigg)^{1/2},
\end{equation*}
where the pointwise supremum is taken over all sequences
$\{\epsilon_m\}_{m\in\Z}\subset\II$ and $\{\delta_{m}\}_{m\in\Z}\subset\II$
such that $r_{m+1}\leq\epsilon_{m}\leq\delta_{m}\leq r_{m}$ for all
$m\in\Z$.

In this paper we are interested in studying the $\rho$-variation and oscillation for the family
$\TT^\mu_{\varphi}f$. That is, we will deal with
\begin{equation*}
\begin{split}
&(\VV_{\rho}\circ\TT^\mu_{\varphi})f(x):=\VV_{\rho}(\TT^\mu_{\varphi}f)(x)
=\VV_{\rho}(K\varphi*(f\mu))(x)\quad\text{and}\\
&(\OO\circ\TT^\mu_{\varphi})f(x):=\OO(\TT^\mu_{\varphi}f)(x)
=\OO(K\varphi*(f\mu))(x),
\end{split}
\end{equation*}
for a Borel measure $\mu$ and $f\in L^1(\mu)$.
Although it is not clear from the definitions, these operators are
$\mu$-measurable (see \cite{CJRW-Hilbert}, \cite{JSW}).

Given $E\subset \R^d$, we denote by $\HH^n_E$ the $n$-dimensional
Hausdorff measure restricted to $E$.
Let $\Gamma:=\{x\in \R^d\,:\,x=(\widetilde x,A(\widetilde
x))\}$ be the graph of a Lipschitz function $A:\R^n\to\R^{d-n}$ with Lipschitz constant $\Lip(A)$. Let
$H^1(\HH^{n}_{\Gamma})$ and $BMO(\HH^{n}_{\Gamma})$ be the (atomic)
Hardy space and the space of functions with bounded mean
oscillation, respectively, with respect to the measure
$\HH^{n}_{\Gamma}$. The following is our main result.

\begin{teo}\label{main theorem}
Let $\rho>2$, let $K$ be a kernel satisfying  \eqref{eq333}, and set $\mu:=\HH^{n}_{\Gamma}$. The operators
$\VV_{\rho}\circ\TT^{\mu}_{\varphi}$ and
$\OO\circ\TT^{\mu}_{\varphi}$ are bounded
\begin{itemize}
\item in $L^p(\mu)$ for $1<p<\infty$,
\item from $H^1(\mu)$ to $L^1(\mu)$,
\item from $L^1(\mu)$ to $L^{1,\infty}(\mu)$, and
\item from $L^\infty(\mu)$ to $BMO(\mu)$.
\end{itemize}
In all the cases above, the norm of $\OO\circ\TT^{\mu}_{\varphi}$ is bounded independently of the sequence that defines $\OO$.
\end{teo}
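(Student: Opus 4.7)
The strategy divides into two stages. First, I would reduce all endpoint conclusions ($L^p$ for $1<p<\infty$, weak $(1,1)$, $H^1(\mu)\to L^1(\mu)$, and $L^\infty(\mu)\to BMO(\mu)$) to the $L^2(\mu)$ boundedness of $\VV_\rho\circ\TT^\mu_\varphi$ and $\OO\circ\TT^\mu_\varphi$ via vector-valued Calder\'on--Zygmund theory on spaces of homogeneous type. Both operators are singular integrals taking values in a Banach space (the $\rho$-variation space for $\VV_\rho$, the Hilbert space $\ell^2$ for $\OO$), with vector-valued kernel built from $\{\varphi_\epsilon(x-y)K(x-y)\}_{\epsilon>0}$. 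Using \eqref{eq333} and the $\CC^2$ smoothness of $\varphi_\R$ one verifies the standard vector-valued size and H\"ormander bounds, with constants independent of the reference sequence $\{r_m\}$; since $\Gamma$ is $n$-Ahlfors regular, the usual CZ machinery then produces every endpoint bound from the $L^2$ one and shows that the oscillation norm is controlled independently of $\{r_m\}$.

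The crux is therefore $\|\VV_\rho(\TT^\mu_\varphi f)\|_{L^2(\mu)}\lesssim\|f\|_{L^2(\mu)}$ (and its analogue for $\OO$). Following the Campbell--Jones--Rosenblatt--Wiener strategy of \cite{CJRW-Hilbert} adapted to the non-convolution Lipschitz setting, I would split the variation into a \emph{short} piece over decreasing sequences inside each dyadic interval $[2^{-m-1},2^{-m})$ and a \emph{long} piece evaluated along $\epsilon_m:=2^{-m}$. The $\CC^2$ regularity of $\varphi_\R$ gives, for $\epsilon,\epsilon'$ in the same dyadic interval,
\[
|T^\mu_{\varphi_\epsilon}f(x) - T^\mu_{\varphi_{\epsilon'}}f(x)| \lesssim \frac{|\epsilon-\epsilon'|}{\epsilon}\int_{|\wit x-\wit y|\sim\epsilon} \frac{|f(y)|}{|x-y|^{n}}\,d\mu(y),
\]
so the short variation is pointwise dominated by a standard $g$-function, whose $L^2(\mu)$ bound follows from the Coifman--McIntosh--Meyer theorem applied scale by scale together with an almost-orthogonality argument. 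The long variation, using $\rho>2$ and a L\'epingle-type inequality, is controlled by the Littlewood--Paley-type square function
\[
Sf(x) := \Big(\sum_{m\in\Z}|\Delta_m f(x)|^2\Big)^{1/2},\qquad \Delta_m f := T^\mu_{\varphi_{2^{-m-1}}}f - T^\mu_{\varphi_{2^{-m}}}f.
\]
The oscillation operator reduces in parallel to the same $g$-function and $S$, with no long/short split needed since the sequences defining $\OO$ are already anchored to $\{r_m\}$.

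The main obstacle is the $L^2(\mu)$ boundedness of $S$. Because $\mu$ is not translation invariant and Fourier methods are unavailable on a Lipschitz graph, the almost-orthogonality between the $\Delta_m$'s must be obtained geometrically. My plan is to combine the Coifman--McIntosh--Meyer theorem (giving a uniform $L^2(\mu)$ bound for the $T^\mu_{\varphi_\epsilon}$'s) with a Cotlar--Stein argument: a direct kernel computation, exploiting the oddness of $K$ together with the smoothness of $\varphi_\R$, should yield off-diagonal operator bounds $\|\Delta_m\Delta_k^*\|_{L^2\to L^2}\lesssim 2^{-c|m-k|}$. If those estimates prove too delicate in the non-flat setting, a natural fallback is a corona decomposition of $\Gamma$ into pieces on which $A$ is close to affine \`a la David--Semmes, reducing to a small perturbation of the flat case on each good piece while the bad pieces are handled by Carleson packing; this combines cleanly with a $T(b)$ argument on spaces of homogeneous type. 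The essential technical difficulty throughout is extracting the cancellation furnished by the odd symmetry of $K$ in a geometric setting where no exact symmetries are present.
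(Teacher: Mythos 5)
Your proposal contains a genuine gap in the treatment of the long variation. You claim that, using $\rho>2$ and ``a L\'epingle-type inequality,'' the long variation is controlled by the square function
$Sf(x) = \big(\sum_{m\in\Z}|\Delta_m f(x)|^2\big)^{1/2}$ with $\Delta_m f := T^\mu_{\varphi_{2^{-m-1}}}f - T^\mu_{\varphi_{2^{-m}}}f$.
This is false: the $\rho$-variation of a sequence $\{a_m\}$ is \emph{not} bounded by the $\ell^2$ norm of its consecutive differences $\{a_{m+1}-a_m\}$. (Take $a_m=m/N$ on $0\leq m\leq N$ and constant outside; then $\sum_m|a_{m+1}-a_m|^2=1/N\to0$ while the $\rho$-variation is bounded below by $|a_N-a_0|=1$.) L\'epingle's inequality produces an $L^2$ bound on $\VV_\rho$ only for \emph{martingales}; on Lipschitz graphs there is no Fourier identity analogous to $Q_\epsilon = P_\epsilon*H$ (as the paper itself notes), so one cannot simply invoke it. The entire difficulty that motivated the paper is that the sequence $\{T^\mu_{\varphi_{2^{-m}}}f(x)\}_m$ is not a martingale and must first be \emph{compared} with one. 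Concretely, the paper constructs an explicit averaged martingale $E_m\mu$ (averages of $T\mu$ over translated dyadic v-cubes, averaged over the translation parameter $a\in\R^n$) and writes $a_m = E_m + (a_m - E_m)$; L\'epingle handles $\VV_\rho(\{E_m\})$, while the error is controlled in the plain $\ell^2$ norm $W\mu = \|\{a_m-E_m\}\|_{\ell^2}$ (not the $\ell^2$ norm of consecutive differences), which does bound $\VV_\rho$ for $\rho\ge2$. Proving $\|W\mu\|_{L^2(\mu)}\lesssim\mu(P)^{1/2}$ is then a delicate multi-scale estimate via the $\alpha$ and $\beta$ coefficients of Jones, David--Semmes, and Tolsa, and is the technical core (Section 4). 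Your proposal never supplies a martingale, never makes the $a_m-E_m$ comparison, and rests on a false reduction to $S$; a Cotlar--Stein or $T(b)$ argument would at best bound $S$ in $L^2$, which does not imply the variational bound.

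Your outer framing (reduce the endpoints to an $L^2$ estimate via Calder\'on--Zygmund theory for operators valued in the $\rho$-variation Banach space / $\ell^2$, and split into short and long variation along dyadic scales) is consistent with the paper's strategy, and your short-variation estimate via the $\CC^2$ regularity of $\varphi_\R$ is in the right spirit of the paper's $S\mu$. The ``corona + $T(b)$'' fallback does point toward the $\alpha/\beta$ machinery the paper actually uses, but as stated it aims at almost-orthogonality of the $\Delta_m$'s, which is the wrong target. The missing ingredient is the martingale construction and the long-variation decomposition $\VV_\rho(\{a_m\})\le\VV_\rho(\{E_m\}) + \|\{a_m-E_m\}\|_{\ell^2}$; without it, the argument does not close.
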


Let us recall that the $L^2(\HH^1_\Gamma)$ boundedness of the Cauchy
transform on Lipschitz graphs $\Gamma\subset\C$ with slope small
enough was proved by A.\ P.\ Calder\'{o}n in his celebrated  paper
\cite{Calderon}. The $L^2$ boundedness on Lipschitz graphs in full
generality was proved later on by R. Coifman, A. McIntosh, and Y.
Meyer \cite{CMM}.

Consider the Cauchy kernel $K(z)=1/z$ ($z\in\C$), and set $\mu:=\HH^1_\Gamma$, so $C_\epsilon^\mu=T_{\epsilon}^\mu$. By standard Calder\'on-Zygmund theory
(namely, Cotlar's inequality), the $L^2(\mu)$ boundedness of the
Cauchy transform $C^{\mu}$ is equivalent to the $L^2(\mu)$
boundedness of the maximal operator $C_*^{\mu}$. Let $M^\mu$ denote the Hardy-Littlewood maximal operator with respect to the measure $\mu$. It is easy to check
that, for $f\in L^1(\mu)$ with compact support, there exists some constant $C_0>0$ such that
$$C_\epsilon^{\mu}f(x)\leq T_{\varphi_\epsilon}^\mu f(x)+C_0 M^\mu f(x)\leq (\VV_\rho\circ \TT_{\varphi}^\mu)f(x)+C_0 M^\mu f(x)$$ for all $\epsilon>0$, thus
$(\VV_\rho\circ\TT_{\varphi}^\mu)+C_0M^\mu$ controls the maximal operator $C_*^{\mu}$
and, in this sense, Theorem \ref{main theorem} (together with the known $L^p(\mu)$ boundedness of $M^\mu$) strengthens the
results of \cite{Calderon} and \cite{CMM}. Analogous conclusions
hold for the $n$-dimensional Riesz transform and
the maximal operator $R_*^\mu$.

The operator $\VV_\rho\circ\TT_\varphi^\mu$ is also related to an
important open problem posed by G. David and S. Semmes which actually is our main motivation to prove Theorem \ref{main theorem}. We need some definitions to state it.

Recall that a measure $\mu$ is said to be $n$-dimensional Ahlfors-David
regular, or simply AD regular, if there exists some constant $C$
such that $C^{-1}r^n\leq\mu(B(x,r))\leq Cr^n$ for all $x\in\supp\mu$
and $0<r\leq\diam(\supp\mu)$. It  is not difficult to
see that such a measure $\mu$ must be of the form
$\mu=h\,\HH^n_{\supp\mu}$, where $h$ is some positive function
bounded above and away from zero. A Borel set $E\subset\R^d$ is
called AD regular if the measure $\HH^n_{E}$ is AD regular.
One says that $\mu$ is uniformly $n$-rectifiable, or simply
uniformly rectifiable, if there exist $\theta,M>0$ so that, for each
$x\in\supp\mu$ and $R>0$, there is a Lipschitz mapping $g$ from
the $n$-dimensional ball $B^n(0,R)\subset\R^n$ into $\R^d$ such that
$\Lip(g)\leq M$ and
$\mu\big(B(x,R)\cap g(B^n(0,R))\big) \geq \theta R^n,$
where $\Lip(g)$ stands for the Lipschitz constant of $g$. In the
language of \cite{DS2}, this means that {\em $\supp\mu$  has big
pieces of Lipschitz images of $\R^n$}. A Borel set $E\subset\R^d$ is
called uniformly $n$-rectifiable if $\HH^n_{E}$ is $n$-uniformly
rectifiable. Of course, the $n$-dimensional graph of a Lipschitz
function is uniformly $n$-rectifiable.

David and Semmes asked the following question, which is still open (see \cite[Chapter 7]{Pajot}):

\begin{prob}\label{problema DS}
Is it true that
an $n$-dimensional AD regular measure $\mu$ is $n$-uniformly
rectifiable if and only if $R_*^\mu$ is bounded in $L^2(\mu)$?
\end{prob}

It is proved in \cite{DS1} that if $\mu$ is uniformly rectifiable,
then $R_*^\mu$ is  bounded in $L^2(\mu)$. However, the converse
implication has been proved only in the case $n=1$ and
$d=2$, by P. Mattila, M. Melnikov and J. Verdera \cite{MMV}, using
the notion of curvature of measures (which seems to be useful only
in this case).

Set $\RR^\mu:=\{R^\mu_\epsilon\}_{\epsilon>0}$. By combining some techniques from \cite{DS2}
and \cite{To}, in our forthcoming paper \cite{MT} we show that the $L^2(\mu)$ boundedness of
$\VV_\rho\circ\RR^\mu$ implies that $\mu$ is uniformly $n$-rectifiable. Moreover, 
we also prove that $\VV_\rho\circ\RR^\mu$ is bounded in $L^2(\mu)$ for all AD regular uniformly $n$-rectifiable measures $\mu$. So we obtain the following theorem, which might be considered as a first 
approach to a possible solution of Problem \ref{problema DS}:

\begin{teo}
Let $\rho>2$. An $n$-dimensional AD regular measure $\mu$ is uniformly
$n$-rectifiable if and only if $\VV_\rho\circ\RR^\mu$ is a bounded operator in
$L^2(\mu)$.
\end{teo}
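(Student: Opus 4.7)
The plan is to establish the two directions separately, using Theorem \ref{main theorem} as the engine for sufficiency and a geometric-measure-theoretic argument for necessity.

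For the direction \emph{uniform rectifiability implies $L^2$-boundedness of $\VV_\rho\circ\RR^\mu$}, the first step is to pass from sharp to smooth truncations: using the size estimates on $K$ in \eqref{eq333} and the construction of $\varphi_\epsilon$, one should verify the pointwise domination
\[
\bigl|(\VV_\rho\circ\RR^\mu)f(x) - (\VV_\rho\circ\TT^\mu_\varphi)f(x)\bigr| \leq C\,M^\mu f(x),
\]
reducing matters to bounding $\VV_\rho\circ\TT^\mu_\varphi$ in $L^2(\mu)$. The second step is to invoke the David--Semmes corona decomposition of uniformly rectifiable measures: $\supp\mu$ is partitioned into ``trees'' of dyadic cubes, over each of which $\mu$ is well approximated by a Lipschitz graph $\Gamma$. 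On each such graph, Theorem \ref{main theorem} applies to give an $L^2(\HH^n_\Gamma)$ bound for $\VV_\rho\circ\TT^{\HH^n_\Gamma}_\varphi$; the local estimate is transferred to $\mu$ by a comparison argument, and the tree-by-tree bounds are summed via a Carleson packing. Since $\VV_\rho$ is only subadditive, the gluing needs to be phrased through the quasi-triangle inequality for $\rho$-variations rather than through linear superposition.

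For the direction \emph{$L^2$-boundedness of $\VV_\rho\circ\RR^\mu$ implies uniform rectifiability}, the strategy would follow the philosophy of \cite{DS2} and \cite{To}: convert the analytic hypothesis into a Carleson packing estimate on flatness coefficients on $\supp\mu$. Concretely, for each dyadic cube $Q$ on $\supp\mu$ one extracts a scale-localized piece of $\VV_\rho\circ\RR^\mu$ by testing against $\mathbf{1}_Q$ (or a smooth bump adapted to $Q$); the point is that when $\mu$ is far from flat on $Q$, the Riesz transform $R^\mu_\epsilon\mathbf{1}_Q$ must oscillate appreciably as $\epsilon$ ranges through the diameter of $Q$, and this oscillation is seen by $\VV_\rho$ but \emph{not} by $R^\mu_*$ alone. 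Summing these lower bounds over the lattice, while using the hypothesis as an upper bound, should yield a Carleson-type inequality
\[
\sum_{Q\subset R} \beta_{\mu,2}(Q)^{2}\,\mu(Q) \leq C\,\mu(R),
\]
which by the results of \cite{DS1} characterizes uniform $n$-rectifiability.

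The main obstacle is the necessity direction. At the level of $L^2$-boundedness alone, the converse to \cite{DS1} is currently known only for $n=1$ via the curvature method of \cite{MMV}, which does not generalize. The whole point of introducing $\VV_\rho$ is that it carries quantitative multi-scale information which $R^\mu_*$ does not, and extracting geometric flatness from this extra analytic data is substantially more delicate than the Lipschitz-pieces argument used in sufficiency; this is the technical core of the companion paper \cite{MT}. A secondary technical issue, present in both directions, is that the $\varphi_\epsilon$ used here are parametrized by the \emph{horizontal} coordinate $|\widetilde x|$ rather than $|x|$, so comparisons with genuine sharp truncations $R^\mu_\epsilon$ must be carried out along the graph, a matter that has to be handled uniformly across the corona trees.
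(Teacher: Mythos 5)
The paper does not actually prove this theorem: after stating it, the text explicitly says the proof is out of the scope of the present paper and appears in the companion work \cite{MT}. What the paper does provide is a high-level description of the strategy, namely that Theorem~\ref{main theorem} on Lipschitz graphs is the essential ingredient, that the passage from Lipschitz graphs to general uniformly rectifiable measures uses the corona decomposition of \cite{DS1}, and that the converse direction (boundedness of $\VV_\rho\circ\RR^\mu$ implies uniform rectifiability) is obtained by combining techniques from \cite{DS2} and \cite{To}. Your sketch is consistent with that description: you correctly identify the corona decomposition and Theorem~\ref{main theorem} as the engine for sufficiency, you correctly point to the packing-condition characterization via $\beta$ (or $\alpha$) coefficients from \cite{DS1}/\cite{To} as the target for necessity, and you correctly flag that the necessity direction is the technical core of \cite{MT} and is not reducible to the $R^\mu_*$ result (which is known to characterize uniform rectifiability only when $n=1$, by \cite{MMV}).

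Two cautionary remarks. First, the pointwise domination $|(\VV_\rho\circ\RR^\mu)f - (\VV_\rho\circ\TT^\mu_\varphi)f| \leq C M^\mu f$ that you posit is not automatic: what one has directly is $|R^\mu_\epsilon f - T^\mu_{\varphi_\epsilon} f|\leq C M^\mu f$ for each $\epsilon$, and since $\VV_\rho$ involves an $\ell^\rho$-sum of increments over an unbounded decreasing sequence of scales, a uniform bound on each difference does not, by itself, control $\VV_\rho$ of the difference family. The smooth-versus-sharp truncation issue must therefore be handled at the level of the variation (using, e.g., cancellation across scales or a telescoping dyadic argument), which is one of the delicate points the paper alludes to and is handled in \cite{MT}. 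Second, in the necessity direction, your proposed approach of ``testing against $\mathbf{1}_Q$ and extracting oscillation'' is one plausible heuristic, but the paper's own pointer to \cite{DS2} and \cite{To} suggests that the actual argument in \cite{MT} proceeds through the quasiorthogonality and $\alpha$-coefficient machinery rather than a direct cube-by-cube lower bound on variation. Since this paper does not spell out that argument, your sketch can be taken as consistent with, but not verified against, the intended proof.
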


An essential ingredient for the proof of this result is
Theorem \ref{main theorem} above. The arguments and techniques used to derive the $L^2$ boundedness of $\VV_\rho\circ\RR^\mu$ on uniformly rectifiable measures from the $L^2$ boundedness of $\VV_\rho\circ\RR_\varphi^\mu$
on Lipschitz graphs are quite delicate ($\RR^\mu_\varphi$ is defined as $\RR^\mu$ but using the family $\varphi$ for the truncations). In particular, they involve the corona type decomposition introduced in \cite{DS1}. For this reason, the proof of the preceding
theorem is out of the scope of this paper and will appear in \cite{MT}.

Concerning the background on the $\rho$-variation and oscillation, a
fundamental result is L\'epingle's inequality \cite{Lepingle}, from
which the $L^p$ boundedness of the $\rho$-variation and oscillation
for martingales follows, for $\rho>2$ and $1<p<\infty$ (see Theorem \ref{osc-var
martingala} below for more details). From this result on
martingales, one deduces that the $\rho$-variation and oscillation
are also bounded in $L^p$ for the averaging operators (also called
differentiation operators, see \cite{JKRW-ergodic}):
\begin{equation}\label{eqaver11}
D_\epsilon
f(x)=\frac1{|B(x,\epsilon)|}\int_{B(x,\epsilon)}f(y)\,dy,\qquad
x\in\R.
\end{equation}

As far as we know, the first work dealing with the $\rho$-variation
and oscillation for singular integral operators is the one of J.\
Campbell, R.\ L.\ Jones, K.\ Reinhold and M.\ Wierdl
\cite{CJRW-Hilbert}, where the $L^p$ and weak $L^1$ boundedness of
the $\rho$-variation (for $\rho>2$) and oscillation for the Hilbert transform was
proved. Recall that, for $f\in L^p(\R)$ and $x\in\R$,
$$H_\epsilon f(x) = \frac1\pi \int_{|x-y|>\epsilon}\frac{1}{x-y}\,f(y)\,dy,$$
and then the Hilbert transform of $f$ is defined by
$Hf(x)=\lim_{\epsilon\to0}H_\epsilon f(x)$, whenever the limit
exists.
Later on, there appeared other papers showing the $L^p$
boundedness of the $\rho$-variation and oscillation for singular
integrals in $\R^d$ (\cite{CJRW-singular integrals}), with weights
(\cite{GT}), or for other operators such as the spherical averaging
operator or singular integral operators on parabolas (\cite{JSW}).
Finally, we remark that, very recently, the
case of the Carleson operator has been considered too (\cite{Lacey},
\cite{OSTTW}).

Notice that the Hilbert transform is one of the simplest examples
where Theorem \ref{main theorem} applies (one sets
$\Gamma=\R$, i.e. $A\equiv0$), and so one obtains a new proof of the $L^p$
boundedness of the $\rho$-variation and oscillation for the Hilbert
transform. In the original proof in \cite{CJRW-Hilbert}, a key
ingredient was the following classical identity, which follows via
the Fourier transform:
\begin{equation}\label{hilbert}
Q_\epsilon =P_\epsilon*H,
\end{equation}
where $P_\epsilon$ is the Poisson kernel and $Q_\epsilon$ is the
conjugated Poisson kernel. Using this identity and the close
relationship between the operators $Q_\epsilon$ and $H_\epsilon$,
Campbell {\it et al.} derived the $L^p$ boundedness of the
$\rho$-variation and oscillation for the Hilbert transform from the
one of the family $\{D_\epsilon(H f)\}_{\epsilon>0}$, where
$D_\epsilon$ is the averaging operator in \eqref{eqaver11} (notice
that $P_\epsilon$ can be written as a convex combination of
operators $D_\delta$, $\delta>0$).

In most of the previous results concerning $\rho$-variation and
oscillation of families of operators from harmonic analysis, the
Fourier transform is a fundamental tool. However, this is not useful
in order to prove Theorem \ref{main theorem}, since the graph
$\Gamma$ is not invariant under translations in general. Moreover,
even for the Cauchy transform, there is no formula like
(\ref{hilbert}), which relates the truncations of a singular
integral operator with an averaging operator applied to a singular
integral operator, when $\Gamma$ is a general Lipschitz graph.

The main ingredients of our proof of Theorem \ref{main theorem} are the known results on the $\rho$-variation and oscillation
for martingales (L\'{e}pingle's inequality \cite{Lepingle}) and a
multiscale analysis which stems from the geometric proof of the
$L^2$ boundedness of the Cauchy transform on Lipschitz graphs by P.\
W.\ Jones \cite{Jones-Escorial} and his celebrated work
\cite{Jones-salesman} on quantitative rectifiability in the plane,
using the so called $\beta$ coefficients. Some of the techniques in
these papers were further developed in higher dimensions by David
and Semmes \cite{DS1} for Ahlfors-David regular sets. More recently,
in \cite{To} some coefficients denoted by $\alpha$, in the spirit of
the Jones' $\beta$'s, were introduced, and they were shown to be
useful for the study of the $L^p$-boundedness of Calder\'on-Zygmund
operators on Lipschitz graphs and on uniformly rectifiable sets (see
the definition below Theorem \ref{teoappli}). In our paper, the
$\alpha$ and $\beta$ coefficients play a fundamental role.

Let us remark that L\'{e}pingle's inequality, which asserts the
$L^p$ boundedness of the $\rho$-variation of martingales, fails if
one assumes $\rho\leq2$ (see \cite{Qi} and \cite{JW}, for example). Moreover,
this fact can be brought to the $\rho$-variation of averaging
operators and singular integral operators, thus it is essential to
assume $\rho>2$ in Theorem \ref{main theorem}. Analogous
conclusions hold if one replaces the $\ell^2$-norm by and
$\ell^\rho$-norm with $\rho<2$ in the definition of $\OO$. See
\cite{CJRW-Hilbert}, or \cite{Akcoglu} for the case of martingales.

Concerning the direct applications of Theorem \ref{main theorem}, it
is easily seen that the $L^p$ boundedness of
$\VV_\rho\circ\TT_\varphi^\mu$ yields a new proof of the existence of
the principal values $T_{\varphi}^{\mu}
f(x):=\lim_{\epsilon\to0}T_{\varphi_\epsilon}^{\mu}f(x)$  for all $f\in L^p(\mu)$ and almost all $x\in\Gamma$,
without using a dense class of functions in $L^p(\mu)$ as
in the classical proof. Moreover, from Theorem \ref{main theorem} one also gets some information on the speed of convergence.
In fact, a classical result derived from variational inequalities is
the boundedness of the {\em $\lambda$-jump operator} $N_\lambda\circ\TT_\varphi^\mu$ and
the {\em $(a,b)$-upcrossings operator} $N_a^b\circ\TT_\varphi^\mu$. Given $\lambda>0$,
$f\in L^1_{loc}(\mu)$ and $x\in\R^d$, one defines
$(N_\lambda\circ\TT_\varphi^\mu)f(x)$ as the supremmum of all integers $N$ for which
there exists
$0<\epsilon_1<\delta_1\leq\epsilon_2<\delta_2\leq\cdots\leq\epsilon_N<\delta_N$
so that
$$|T^{\mu}_{\varphi_{\epsilon_i}}f(x)-T^{\mu}_{\varphi_{\delta_i}}f(x)|>\lambda$$
for each $i=1,\ldots,N$. Similarly, given $a<b$, one defines $(N_a^b\circ\TT_\varphi^\mu)
f(x)$ to be the supremmum of all integers $N$ for which there exists
$0<\epsilon_1<\delta_1\leq\epsilon_2<\delta_2\leq\cdots\leq\epsilon_N<\delta_N$
so that
$T^{\mu}_{\varphi_{\epsilon_i}}f(x)<a$ and $T^{\mu}_{\varphi_{\delta_i}}f(x)>b$
for each $i=1,\ldots,N$. Using Theorem \ref{main theorem} one
obtains (by the same arguments as in \cite[Theorem 1.3 and Corollary
7.1]{CJRW-Hilbert}) the following:

\begin{teo}\label{teoappli}
Let $\rho>2$, $\lambda>0$, and let $K$, and $\mu$ be as in
Theorem \ref{main theorem}. For $1<p<\infty$, there exist
constants $C_1$ and $C_2$ depending on $\rho$, $n$, $d$, $K$, and
$\Lip(A)$ (and on $p$ for the case of $C_1$) such that
\begin{gather*}
\|\big((N_\lambda\circ\TT_\varphi^\mu)f\big)^{1/\rho}\|_{L^p(\mu)}\leq\frac{C_1}
{\lambda}\,\|f\|_{L^p(\mu)}\,\text{ and }\\
\mu(\{x\in\Gamma:\,(N_\lambda\circ\TT_\varphi^\mu)f(x)>m\})\leq\frac{C_2}{\lambda
m^{1/\rho}}\,\|f\|_{L^1(\mu)}.
\end{gather*}
\end{teo}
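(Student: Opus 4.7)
The plan is to derive Theorem \ref{teoappli} directly from Theorem \ref{main theorem} via the trivial pointwise domination of the $\lambda$-jump operator by the $\rho$-variation. Specifically, for any family $\FF=\{F_\epsilon\}$ and any $x$, if $N:=(N_\lambda\circ\FF)(x)$ and $0<\epsilon_1<\delta_1\leq\cdots\leq\epsilon_N<\delta_N$ is a witnessing sequence, then by the very definition of $\VV_\rho$,
\begin{equation*}
\VV_\rho(\FF)(x)^\rho \;\geq\; \sum_{i=1}^{N}|F_{\epsilon_i}(x)-F_{\delta_i}(x)|^\rho \;\geq\; N\lambda^\rho,
\end{equation*}
so taking supremum over such sequences gives the key pointwise inequality
\begin{equation*}
\lambda\,\bigl((N_\lambda\circ\FF)(x)\bigr)^{1/\rho}\;\leq\;\VV_\rho(\FF)(x).
\end{equation*}

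Applied to $\FF=\TT^\mu_{\varphi}f$, this reduces the two claimed estimates to the $L^p$ and weak $L^1$ bounds already provided by Theorem \ref{main theorem}. For the first inequality, I would simply write
\begin{equation*}
\bigl\|\bigl((N_\lambda\circ\TT^\mu_{\varphi})f\bigr)^{1/\rho}\bigr\|_{L^p(\mu)}\;\leq\;\frac{1}{\lambda}\,\bigl\|(\VV_\rho\circ\TT^\mu_{\varphi})f\bigr\|_{L^p(\mu)}\;\leq\;\frac{C_1}{\lambda}\,\|f\|_{L^p(\mu)},
\end{equation*}
where the last step uses the $L^p(\mu)$ bound from Theorem \ref{main theorem}. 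For the weak-type inequality, I would note the inclusion
\begin{equation*}
\{x\in\Gamma:(N_\lambda\circ\TT^\mu_{\varphi})f(x)>m\}\;\subset\;\bigl\{x\in\Gamma:(\VV_\rho\circ\TT^\mu_{\varphi})f(x)>\lambda m^{1/\rho}\bigr\},
\end{equation*}
and apply the $L^1(\mu)\to L^{1,\infty}(\mu)$ bound for $\VV_\rho\circ\TT^\mu_{\varphi}$ from Theorem \ref{main theorem} to obtain the constant $C_2/(\lambda m^{1/\rho})$.

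There is no real obstacle here: the content has been absorbed entirely into Theorem \ref{main theorem}, and what remains is the standard jump-versus-variation comparison used in \cite{CJRW-Hilbert}. The same reasoning yields the corresponding upcrossing estimates for $N_a^b\circ\TT^\mu_{\varphi}$, since each $(a,b)$-upcrossing produces a jump of size at least $b-a$, so $(N_a^b\circ\TT^\mu_{\varphi})f\leq (N_{b-a}\circ\TT^\mu_{\varphi})f$ pointwise and the previous estimates transfer verbatim.
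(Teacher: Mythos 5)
Your proof is correct and takes exactly the route the paper has in mind: the paper discharges this theorem by citing the arguments of \cite[Theorem 1.3 and Corollary 7.1]{CJRW-Hilbert}, and those arguments are precisely the pointwise domination $\lambda\big((N_\lambda\circ\TT^\mu_\varphi)f(x)\big)^{1/\rho}\leq(\VV_\rho\circ\TT^\mu_\varphi)f(x)$ followed by the $L^p$ and weak $(1,1)$ bounds of Theorem \ref{main theorem}. Nothing is missing; the observation on the upcrossing operator also matches the remark made in the paper immediately after the theorem.
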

Trivially, $(N_a^b\circ\TT_\varphi^\mu)f\leq (N_{b-a}\circ\TT_\varphi^\mu)f$,
thus Theorem \ref{teoappli} also holds replacing $\lambda$ by $b-a$
and $N_\lambda$ by $N_a^b$. In \cite{JSW} it is shown that the
results of Theorem \ref{teoappli} still hold when $\rho=2$ for the
particular case of the Hilbert transform. In our paper we do not pursue this endpoint result.


\section{Preliminaries}\label{s preliminaries}
Throughout all the paper, $n$ and $d$ are two fixed integers such
that $0<n<d$. Given a point $x=(x^1,\ldots,x^d)\in\R^d,$ we use the
notation $\widetilde x:=(x^1,\ldots,x^n)\in\R^n$. Given a
function $f:\R^m\to\R$, we denote by $\nabla f$ its gradient (when
it makes sense), and by $\nabla^2f$ the matrix of second derivatives
of $f$. If $f$ depends on different points $x_1,x_2,\ldots\in\R^m$,
then $\nabla_{x_i}f$ denotes the gradient of $f$ with respect to the
$x_i$ variable, and analogously for $\nabla_{x_i}^2f$.

For two sets $F_1,F_2\subset\R^d$, we denote by $\dist_\HH(F_1,F_2)$
the Hausdorff distance between $F_1$ and $F_2$. We denote by $\LL^n$
the Lebesgue measure on $\R^n$, and for the sake of simplicity, we
set $\|\cdot\|_{p}:=\|\cdot\|_{L^p(\LL^n)}$ for $1\leq p\leq\infty$,
and $dy:=d\LL^n(y)$ for $y\in\R^n$.

In the paper, when we refer to the angle between two affine
$n$-planes in $\R^d$, we mean the angle between the $n$-dimensional subspaces associated to the $n$-planes. As usual, the letter `$C$' stands
for some constant which may change its value at different
occurrences, and which quite often only depends on $n$ and $d$. The notation $A\lesssim B$ ($A\gtrsim B$) means that
there is some fixed constant $C$ such that $A\leq CB$ ($A\geq CB$),
with $C$ as above. Also, $A\approx B$ is equivalent to $A\lesssim B
\lesssim A$.

\subsection{More about the family $\varphi$}
Given $x\in\R^{d}$, $0<\epsilon\leq\delta$, and a finite Borel measure $\mu$,
we set $\varphi_\epsilon^\delta(x):=\varphi_\epsilon(x)-\varphi_\delta(x)$ and we define
\begin{equation*}
(K\varphi_{\epsilon}^\delta*\mu)(x):=\int\varphi_{\epsilon}^\delta(x-y)K(x-y)\,d\mu(y),
\end{equation*}
thus
$(K\varphi_{\epsilon}^\delta*\mu)(x)=(K\varphi_{\epsilon}*\mu)(x)-(K\varphi_\delta*\mu)(x)$.

For $m\in\N$, $x\in\R^m$, and $R\geq r>0$, we denote by $B^m(x,r)$
the closed ball of $\R^m$ with center $x$ and radius $r$, and by
$A^m(x,r,R)$ the closed annulus of $\R^m$ centered at $x$ with inner
radius $r$ and outer radius $R$. We also use the notation $B(x,r)$
and $A(x,r,R)$ when there is no possible confusion about $m$.

Each function
$\varphi_\epsilon^\delta$ is non negative, and
$\supp\varphi_\epsilon^\delta\subset
A^n(0,2.1\epsilon\sqrt{n},3\delta\sqrt{n})\times\R^{d-n}\subset\R^d.$ Moreover,
$\sum_{j\in\Z}\varphi_{2^{-j-1}}^{\,2^{-j}}(x)=1$ for $\wit x\neq0$, and
there are at most two terms that do not vanish in the previous sum
for a given $x\in\R^d$.

\subsection{The $\alpha$ and $\beta$ coefficients. Special dyadic lattice}\label{ss alpha i beta}

Given $m\in\N$, $\lambda>0$, and a cube $Q\subset\R^m$  (i.e.
$Q:=[0,b)^m+a$ with $a\in\R^m$ and $b>0$), $\ell(Q)$ denotes the
side length of $Q$, $z_Q$ denotes the center of $Q$ and $\lambda Q$
denotes the cube with center $z_Q$ and side length $\lambda\ell(Q)$.
Throughout the paper, we will only use cubes with sides parallel to
the axes.

Let $\mu$ be a locally finite Borel measure on $\R^d$.
Given $1\leq p<\infty$ and a cube $Q\subset\R^d$, one sets (see \cite{DS2})
\begin{equation}\label{def beta_p}
\beta_{p,\mu}(Q) = \inf_L
\biggl\{ \frac1{\ell(Q)^n}\int_{2Q} \biggl(\frac{\dist(y,L)}{\ell(Q)}\biggr)^pd\mu(y)\biggr\}^{1/p},
\end{equation}
where the infimum is taken over all $n$-planes $L$ in $\R^d$.
For $p=\infty$ one replaces the $L^p$ norm by the supremum norm:
\begin{equation}\label{def beta_infty}
\beta_{\infty,\mu}(Q) = \inf_L \biggl\{ \sup_{y\in \supp\mu\cap 2Q}
\frac{\dist(y,L)}{\ell(Q)}\biggr\},
\end{equation}
where the infimum is taken over all $n$-planes $L$ in $\R^{d}$
again.  These coefficients were introduced by P. W. Jones in
\cite{Jones-Escorial} for $p=\infty$ and by G. David and S. Semmes
in \cite{DS1} for $1\leq p<\infty$.

Let $F\subset\R^d$ be the closure of an open set. Given two finite
Borel measures $\sigma$, $\nu$ on $\R^d$, one sets
\begin{equation}\label{wasserstein dist}
\dist_F(\sigma,\nu):= \sup\Bigl\{ \Bigl|{\textstyle \int f\,d\sigma -
\int f\,d\nu}\Bigr|:\,{\rm Lip}(f) \leq1,\,\supp f\subset
F\Bigr\}.
\end{equation}
It is easy to check that this is a distance in the space
of finite  Borel measures $\sigma$ such that $\supp\sigma\subset F$
and $\sigma(\partial F)=0$. Moreover, it turns out that this distance is a variant of the well known Wasserstein distance $W_1$ from optimal transportation (see \cite[Chapter 1]{Vi}).
 See \cite[Chapter 14]{Mattila-llibre}
for other properties of $\dist_F$.

Given a cube $Q$ which intersects $\supp\mu$,
consider the closed ball $B_Q:=B(z_Q,6\ell(Q))$. Then one defines (see \cite{To})
\begin{equation}\label{def alpha}
\alpha_\mu^n(Q):=\frac1{\ell(Q)^{n+1}}\,\inf_{c\geq0,L} \,\dist_{B_Q}(\mu,\,c\HH^n_{L}),
\end{equation}
where the infimum is taken over all constants $c\geq0$ and all $n$-planes $L$ in $\R^d$.
For convenience, if $Q$ does not intersect $\supp\mu$, we set $\alpha^n_\mu(Q)=0$.
To simplify notation, sometimes
we will write $\alpha_\mu(Q)$ or $\alpha(Q)$ instead of $\alpha_\mu^n(Q)$ (and analogously for the $\beta$'s).

The following result characterizes uniform rectifiability in terms of the $\alpha$ and $\beta$ coefficients.
\begin{teo}\label{teounif}
Let $\mu$ be an $n$-dimensional AD regular measure on $\R^d$, and consider any $p\in[1,2]$. Then, the following are equivalent:
\begin{itemize}
\item[(a)] $\mu$ is uniformly $n$-rectifiable.
\item[(b)] For any cube $R\subset\R^d$,
\begin{equation}\label{pack0}
\sum_{Q\in\DD_{\R^d}(R)}\beta_{p,\mu}(Q)^2\ell(Q)^n\leq C\ell(R)^n
\end{equation}
with $C$ independent of $R$, where $\DD_{\R^d}(R)$ stands for the
collection of cubes of $\R^d$ contained in $R$ which are obtained by
splitting $R$ dyadically.
\item[(c)] There exists $C>0$ such that, for any cube $R\subset\R^d$,
\begin{equation}\label{pack2}
\sum_{Q\in\DD_{\R^d}(R)}\alpha_\mu(Q)^2\ell(Q)^n\leq C\ell(R)^n.
\end{equation}
\end{itemize}
\end{teo}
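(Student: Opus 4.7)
The plan is to assemble the statement from two classical characterizations. The equivalence (a)$\Leftrightarrow$(b) is essentially the main theorem of David--Semmes \cite{DS1}, and (a)$\Leftrightarrow$(c) is the main result of \cite{To}. The only genuinely new content is to cover the full range $p\in[1,2]$ uniformly, and this is purely organizational.

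For the $\beta$-side, I would first observe that AD-regularity yields $\mu(2Q)\lesssim\ell(Q)^n$, so by H\"older's inequality one has the pointwise monotonicity
\[
\beta_{p,\mu}(Q)\;\lesssim\;\beta_{q,\mu}(Q)\qquad\text{whenever }1\le p\le q,
\]
with implicit constant depending only on the AD-regularity constant. Consequently, among the packing conditions \eqref{pack0} for $p\in[1,2]$, the case $p=2$ is the weakest and $p=1$ the strongest. The implication (a)$\Rightarrow$(b)$_{p=2}$ is proved in \cite{DS1}, and it then passes to every $p\in[1,2]$ by the monotonicity above. Conversely, (b)$_{p=2}\Rightarrow$(a) is also proved in \cite{DS1}, and since this is the statement with the \emph{weakest} hypothesis in our range, it closes the $\beta$ loop for all $p\in[1,2]$.

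For the $\alpha$-side, the argument in \cite{To} proceeds via a pointwise comparison with $\beta_{1}$. The easy inequality $\beta_{1,\mu}(Q)\lesssim\alpha_\mu(Q)$ is obtained by testing the Wasserstein-type distance \eqref{wasserstein dist} against Lipschitz bumps of the form $\phi(y):=\max\{0,\,\ell(Q)/2-\dist(y,L^*)\}$, where $L^*$ and $c^*$ nearly realize the infimum in \eqref{def alpha}; comparing $\int\phi\,d\mu$ with $c^*\int\phi\,d\HH^n_{L^*}$ forces $c^*\approx 1$ (so the density is of the expected size) and immediately yields the claimed majorization. Combined with the $\beta$-side already established, this gives (c)$\Rightarrow$(b)$_{p=1}\Rightarrow$(a). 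In the reverse direction, one uses the David--Semmes corona decomposition on a uniformly rectifiable $\mu$ to construct, scale by scale, approximating $n$-planes and densities for which $c\,\HH^n_{L}$ is close to $\mu$ on $B_Q$; the ensuing pointwise bound of $\alpha_\mu(Q)$ by a localized average of $\beta_{1,\mu}$-coefficients on nearby cubes, summed over $Q\in\DD_{\R^d}(R)$ and controlled by (b)$_{p=1}$, produces the packing estimate (c).

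The main obstacle, hidden inside both \cite{DS1} and \cite{To}, is the implication \emph{packing control} $\Rightarrow$ \emph{Lipschitz structure}: manufacturing big pieces of Lipschitz images of $\R^n$ out of the quantitative flatness encoded by the Carleson condition. This goes through the corona decomposition of \cite{DS1} and is by far the deepest ingredient of the whole theorem. Since it has already been carried out in those references, I would present Theorem~\ref{teounif} as a compilation of \cite{DS1} and \cite{To}, with the H\"older comparison above serving only to unify the range $p\in[1,2]$.
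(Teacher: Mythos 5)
Your overall strategy matches the paper exactly: after stating this theorem, the paper provides no proof of its own and simply attributes (a)$\Leftrightarrow$(b) to \cite{DS1} and (a)$\Leftrightarrow$(c) to \cite{To}, so a compilation of those two references (adapted to the v-cube lattice, as Remark \ref{remark packing} notes) is precisely what is expected.

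However, your monotonicity bookkeeping for the $\beta$-side is inverted, and this creates a real gap. From $\beta_{p,\mu}(Q)\lesssim\beta_{q,\mu}(Q)$ for $p\le q$ one gets that \eqref{pack0} with $p=2$ \emph{implies} \eqref{pack0} with $p=1$; thus (b)$_{p=2}$ is the \emph{strongest} packing hypothesis in the range and (b)$_{p=1}$ the \emph{weakest}, the opposite of what you wrote. Your deduction (a)$\Rightarrow$(b)$_{p=2}\Rightarrow$(b)$_p$ for $p\le 2$ is correct precisely because $p=2$ gives the strongest conclusion. But to close the converse you assert that (b)$_{p=2}\Rightarrow$(a) suffices "since this is the statement with the weakest hypothesis in our range"; this reasoning fails, because (b)$_p$ for $p<2$ does \emph{not} imply (b)$_{p=2}$, so you have not shown (b)$_p\Rightarrow$(a) for $p<2$. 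The implication you actually need is (b)$_{p=1}\Rightarrow$(a), which is indeed proved in \cite{DS1} (their geometric lemma covers $1\le p<2n/(n-2)$, hence all of $[1,2]$); then (b)$_p\Rightarrow$(b)$_{p=1}\Rightarrow$(a) follows for every $p\in[1,2]$ by the Hölder monotonicity. You in fact rely on exactly this correct form later, when you write (c)$\Rightarrow$(b)$_{p=1}\Rightarrow$(a) on the $\alpha$-side via $\beta_{1,\mu}\lesssim\alpha_\mu$; the fix is simply to cite the same implication on the $\beta$-side. Your sketch of the content of \cite{To} (testing the transport distance \eqref{wasserstein dist} against Lipschitz bumps to obtain $\beta_{1,\mu}\lesssim\alpha_\mu$, and using the corona decomposition for the converse packing estimate on $\alpha_\mu$) is a fair account and matches Remark \ref{remark alpha2}.
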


The equivalence (a)$\Longleftrightarrow$(b) in Theorem \ref{teounif}
was proved by G. David and S. Semmes  in \cite{DS1}, and the
equivalence (a)$\Longleftrightarrow$(c) was proved by X. Tolsa in
\cite{To}.

In this paper we will use a slightly different definition of the
$\alpha$  and $\beta$ coefficients adapted to the $n$-uniformly
rectifiable measure $\mu=f\HH^n_\Gamma$, where
$\Gamma:=\{x\in \R^d\,:\,x=(\widetilde x,A(\widetilde x))\}$
is the $n$-dimensional graph of a given Lipschitz function
$A:\R^n\to\R^{d-n}$  and $f\in L^\infty(\HH^n_\Gamma)$ satisfies
$f(x)\approx1$ for almost all $x\in\Gamma$. To this end, we need to
introduce a special dyadic lattice of sets related to $\Gamma$.
Given a cube $\wit Q\subset\R^n$ (i.e. $\wit Q:=[0,b)^n+a$ with
$a\in\R^n$ and $b>0$), we define $Q:=\wit Q\times\R^{d-n}$. This
type of set will be called {\em v-cube} (``vertical'' cube). We
denote by $\ell(Q)$ and $\wit z_Q$ the side length and center of
$\wit Q$, respectively, and given $\lambda>0$ we set $\lambda
Q:=\lambda \wit Q\times\R^{d-n}$. Let $\wit\DD$ denote the standard
dyadic lattice of $\R^n$, and set $\DD:=\{Q:\wit Q\in\wit\DD\}.$ It
is easy to check that the v-cubes of $\DD$ intersected with $\Gamma$
provide a dyadic lattice associated to the graph $\Gamma$ in the
sense of \cite[Appendix 1]{David-LNM}. Finally, for $m\in\Z$, set
$\DD_m:=\{Q\in\DD:\,\ell(Q)=2^{-m}\}$.

Fix a constant $C_\Gamma>10\sqrt{n}(1+\Lip(A))$ (the
precise  value of $C_\Gamma$ will not be relevant in the proofs
given in the paper). Given $1\leq p\leq\infty$ and a v-cube
$Q\subset\R^d$, we define the coefficient $\beta_{p,\mu}(Q)$ as in
(\ref{def beta_p}) and (\ref{def beta_infty}) but replacing $2Q$ by
$C_\Gamma Q$. We also define $\alpha_\mu(Q)$ as in (\ref{def alpha})
but taking $B_Q:=B(\wit
z_Q,C_\Gamma\ell(Q))\times\R^{d-n}\subset\R^d$. This new definition
of the $\alpha$ and $\beta$ coefficients (adapted to the graph
$\Gamma$) is the one that we will use in the whole paper.

\begin{remark}\label{remark packing}
It is an exercise to check that, with this new definition of the
$\alpha$'s  and $\beta$'s, inequalities (\ref{pack0}) and
(\ref{pack2}) of Theorem \ref{teounif} still hold. Moreover, the
following is an easy consequence of (\ref{pack0}) and (\ref{pack2}):
{\em Let $\Gamma$ be an $n$-dimensional Lipschitz graph, $f\in
L^\infty(\HH^n_\Gamma)$ such that $f(x)\approx1$ for almost all
$x\in\Gamma$, and $\mu=f\HH^n_\Gamma$. Let $1\leq p\leq2$. Given
$C_1,C_2,C_3\geq1$, there exists a constant $C_4>0$ such that, for
any $R\in\DD$,}
\begin{equation*}
\begin{split}
\sum_{Q\in\DD:\,Q\subset C_1R}\big(\,\beta_{p,\mu}(C_2Q)^2+\alpha_\mu(C_3Q)^2\,\big)\,\mu(Q)\leq C_4\mu(R),
\end{split}
\end{equation*}
and the dependence of $C_4$ with respect to $\Gamma$ is only on $\Lip(A)$.
\end{remark}

\begin{remark}\label{remark alpha2}
It is shown in \cite[Lemma 3.2]{To}, that
$\beta_{1,\mu}(Q)\lesssim\alpha_\mu(Q)$  for all $Q\in\DD$. Given
$Q\in\DD$, let $L_Q$ be a minimizing $n$-plane for $\alpha_\mu(Q)$. In
general, $\beta_{\infty,\mu}(Q)$ can not be controlled by
$\beta_{1,\mu}(Q)$, so given $x\in\supp\mu\cap C_\Gamma Q$, we can
not control $\dist(x,L_Q)$ by means of $\alpha_\mu(Q)$. But it is
shown in \cite[Lemma 5.2]{To} that
$\dist(x,L_Q)\lesssim\sum_{R\in\DD:\,x\in R\subset Q}\alpha_\mu(R)\ell(R),$ and in particular, if $P\in\DD$ is such that $P\subset Q$ and
$x\in\supp\mu\cap C_\Gamma P$, and $L_P$ denotes a minimizing
$n$-plane for $\alpha_\mu(P)$, one has (see \cite[Remark 5.3]{To})
\begin{equation}\label{remark alpha2 eq1}
\begin{split}
\dist(x,L_Q)\lesssim\dist(x,L_P)+\sum_{R\in\DD:\,P\subset R\subset Q}\alpha_\mu(R)\ell(R).
\end{split}
\end{equation}
\end{remark}

\subsection{Martingales}\label{ss particular martingale}
First of all, let us recall a particular case of L\'{e}pingle's inequality (see \cite{JSW}, or \cite{Lepingle} and \cite[Theorem
6.4]{JKRW-ergodic} for martingales in a probability space):
\begin{teo}\label{osc-var martingala}
Let $(X,\Sigma,\lambda)$ be a $\sigma$-finite measure space and $\rho>2$. Then,
there exist constants $C_1,C_2>0$ such that, for every martingale
$\GG:=\{G_m\}_{m\in\Z}\in L^2(\lambda)$,
\begin{equation*}
\begin{split}
\|\VV_\rho(\GG)\|_{L^2(\lambda)}\leq
C_1\|\GG\|_{L^2(\lambda)}\quad\text{and}\quad
\|\OO(\GG)\|_{L^2(\lambda)}\leq C_2\|\GG\|_{L^2(\lambda)},
\end{split}
\end{equation*}
where $\|\GG\|_{L^2(\lambda)}:=\sup_{m\in\Z}\|G_m\|_{L^2(\lambda)}$.
The constants $C_1$ and $C_2$ do not depend on the measure
$\lambda$, and $C_2$ neither depends on the fixed sequence that
defines $\OO$.
\end{teo}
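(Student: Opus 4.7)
My plan is to prove this via the classical Bourgain--Pisier--Xu strategy based on the $L^{2}$ orthogonality of martingale differences together with a long/short variation decomposition. First, by a standard truncation one reduces to the case of finite martingales $\GG=\{G_m\}_{m=M_1}^{M_2}$ with $\sup_m\|G_m\|_{L^2(\lambda)}\leq\|\GG\|_{L^2(\lambda)}$; the bounds for the full martingale follow by monotone convergence once uniform constants are produced. The fundamental identity driving everything is that, for any stopping times $\tau_0\leq\tau_1\leq\cdots\leq\tau_N$ (integer valued and bounded),
\begin{equation*}
\sum_{k=0}^{N-1}\|G_{\tau_{k+1}}-G_{\tau_k}\|_{L^2(\lambda)}^2
=\|G_{\tau_N}-G_{\tau_0}\|_{L^2(\lambda)}^2\leq 4\|\GG\|_{L^2(\lambda)}^2,
\end{equation*}
by pairwise orthogonality of the martingale differences. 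This single estimate is the engine; everything else is about converting it into a variational statement.

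The second step is the $L^{2}$ jump inequality. For $\lambda_0>0$, let $N_{\lambda_0}(\GG)(x)$ denote the maximal $N$ for which there exist integers $\sigma_1<\tau_1\leq\sigma_2<\tau_2\leq\cdots\leq\sigma_N<\tau_N$ with $|G_{\tau_k}(x)-G_{\sigma_k}(x)|>\lambda_0$ for each $k$. Selecting these stopping times greedily and applying the orthogonality bound above yields
\begin{equation*}
\lambda_0^2\,\|N_{\lambda_0}(\GG)\|_{L^1(\lambda)}\leq C\,\|\GG\|_{L^2(\lambda)}^2 .
\end{equation*}
A refinement gives the same bound with $\|\cdot\|_{L^1}$ replaced by $\|\cdot\|_{L^{1,\infty}}$, which is what is needed at the endpoint.

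The third step is the long/short variation split, which is the heart of Lépingle's argument. Given any decreasing sequence $\{\epsilon_m\}\subset\II$, classify each gap $(\epsilon_{m+1},\epsilon_m)$ as \emph{short} if both endpoints lie in the same dyadic block $[2^{-j-1},2^{-j})$ of $\II$ and \emph{long} otherwise. For the short part, within a fixed dyadic scale we use orthogonality together with the Carleson embedding / Rademacher--Menshov type trick to pick up a $\log$ factor absorbed by summing in $j$; here $\rho>2$ enters so that $\ell^{\rho}\subset\ell^{2}$ with the appropriate geometric control. For the long part, each such gap crosses a dyadic boundary, so the long variation is dominated, after a layer-cake dyadic slicing in the jump size, by
\begin{equation*}
\sum_{k\in\Z}2^{k\rho}\,N_{2^k}(\GG)(x).
\end{equation*}
Applying the jump inequality from the second step scale by scale, using $\rho>2$ to make the resulting geometric series in $k$ converge, and then exploiting that $2/\rho<1$ so that $(\sum a_k)^{2/\rho}\leq\sum a_k^{2/\rho}$, one sums up to obtain the desired $L^{2}(\lambda)$ bound for $\VV_\rho(\GG)$. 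The main delicate point is squeezing the short variation estimate out of orthogonality, since the naive bound gives only $\rho=2$, which is known to fail; the Rademacher--Menshov rearrangement is precisely what distinguishes $\rho>2$ from $\rho=2$ and is the step I expect to be the trickiest.

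Finally, the oscillation inequality is considerably easier: for the fixed sequence $\{r_m\}_{m\in\Z}$ and any admissible $\{\epsilon_m\},\{\delta_m\}$ with $r_{m+1}\leq\epsilon_m\leq\delta_m\leq r_m$, the differences $G_{\epsilon_m}-G_{\delta_m}$ restricted to the dyadic block indexed by $r_m$ are controlled, after a further stopping-time argument that replaces $\epsilon_m,\delta_m$ by stopping times in that block, by an orthogonal sum; summing in $m$ and invoking orthogonality across non-overlapping blocks gives $\|\OO(\GG)\|_{L^2(\lambda)}\leq C\|\GG\|_{L^2(\lambda)}$ with a constant that does not depend on the specific sequence $\{r_m\}$, as claimed. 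No dependence on $\lambda$ appears anywhere since the whole argument uses only orthogonality and $L^{2}$ norms, which are computed fiberwise with respect to $\lambda$.
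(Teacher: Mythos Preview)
The paper does not prove this theorem: it is quoted as a known result (L\'epingle's inequality) with references to \cite{Lepingle}, \cite{JKRW-ergodic}, and \cite{JSW}, and is then used as a black box. So there is no ``paper's proof'' to compare against; you are supplying an argument the authors deliberately omitted.

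Your overall strategy---orthogonality of martingale differences, the $L^2$ jump inequality via greedy stopping times, and then a conversion from jump bounds to $\rho$-variation using $\rho>2$---is indeed the modern route to L\'epingle's inequality (Bourgain, Pisier--Xu). The oscillation sketch is also in the right spirit. However, Step 3 contains a genuine confusion: you import the short/long decomposition based on dyadic blocks $[2^{-j-1},2^{-j})$ of the parameter set. That decomposition is what the \emph{paper} uses for the family $\{K\varphi_\epsilon*\mu\}_{\epsilon>0}$ with continuous parameter $\epsilon\in(0,\infty)$; it has no meaning for a martingale $\{G_m\}_{m\in\Z}$ indexed by integers, where there are no ``dyadic scales'' of the index and hence no ``short'' gaps. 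In the discrete martingale setting the conversion from the jump inequality $\lambda_0^2\|N_{\lambda_0}(\GG)\|_{L^1}\lesssim\|\GG\|_{L^2}^2$ to the variational bound is done directly, without any short/long split: one slices in the \emph{jump size} (your layer-cake sum $\sum_k 2^{k\rho}N_{2^k}$), not in the parameter. The Rademacher--Menshov/Carleson embedding machinery you invoke for the ``short part'' is unnecessary here and is not part of the standard L\'epingle argument. If you delete the short/long split and run the jump-to-variation conversion directly (as in Pisier--Xu or \cite[Lemma 2.1]{JSW}), the argument goes through.
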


To prove Theorem \ref{main theorem}, we need to introduce a particular martingale, and to review some known results.

\begin{lema}\label{lema Tmu}
Fix a cube $\widetilde P\subset\R^n$ (not necessarily dyadic) and a
Lipschitz graph $\Gamma:=\{x\in \R^d\,:\,x=(\widetilde
x,A(\widetilde x))\}$ such that $\supp A\subset\widetilde P$. Consider
the measure $\mu:=f\HH^n_{\Gamma}$, where $f(x)=1$ for all
$\widetilde x\in\widetilde{P}^c$ and $C_0^{-1}\leq f(x)\leq C_0$  for all
$\widetilde x\in\widetilde P$, for some fixed constant $C_0>0$. Also
set $P:=\widetilde P\times \R^{d-n}$. Then, the following hold:
\begin{gather}
T_*\mu\in L^1_{loc}(\mu),\quad T_*(\chi_E\mu)\in L^1_{loc}(\mu)
\text{ for every compact set } E\subset\R^d,\text{ and}\label{condicio mu martingala2}\\
\|T\mu\|_{L^2(\mu)}\lesssim\mu(P)^{1/2}.\label{condicio mu
martingala3}
\end{gather}
\end{lema}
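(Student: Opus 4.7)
The plan is to reduce \eqref{condicio mu martingala3} to the $L^2(\HH^n_\Gamma)$-boundedness of $T$ on Lipschitz graphs (Coifman, McIntosh, and Meyer for the Cauchy kernel; David's extension to general odd kernels satisfying \eqref{eq333}). Since $C_0^{-1}\le f\le C_0$, the measure $\mu$ is $n$-dimensional AD regular with constants depending only on $C_0$ and $\Lip(A)$, and hence $T$ is bounded on $L^2(\mu)$ with constants of the same nature.

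Set $\sigma:=\HH^n_{\R^n\times\{0\}}$ and $\nu:=\mu-\sigma$. Outside $P$ one has $A\equiv0$ and $f\equiv1$, so $\mu=\sigma$ there; thus $\nu$ is a finite signed measure supported in the compact set $K_0:=\overline{P}\cap(\R^n\times[-\|A\|_\infty,\|A\|_\infty]^{d-n})$, with $|\nu|(\R^d)\lesssim\mu(P)$. Write $T\mu=T\sigma+T\nu$, with $T\sigma$ interpreted as a principal value. By oddness of $K$ and symmetric integration in $\widetilde z=\widetilde x-\widetilde y$, one has $T\sigma(x)=0$ for $x$ on the plane. For $x=(\widetilde x,x^\perp)$ off the plane (with $x^\perp\in\R^{d-n}\setminus\{0\}$), the cancellation identity
\begin{equation*}
T\sigma(x)=\int_{\R^n}\bigl[K(\widetilde z,x^\perp)-K(\widetilde z,0)\bigr]\,d\widetilde z
\end{equation*}
combined with the gradient bound $|\nabla K|\lesssim|\cdot|^{-n-1}$ shows that $|T\sigma(x)|\lesssim1$ uniformly. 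Since $T\sigma$ vanishes on the flat part of $\Gamma$, this gives $\|T\sigma\|_{L^2(\mu)}\lesssim\mu(P)^{1/2}$.

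To bound $T\nu$, I decompose $\nu=\nu_1-\nu_2$ with $\nu_1:=f\chi_P\HH^n_\Gamma$ and $\nu_2:=\chi_{\widetilde P}\sigma$. Since $\|f\chi_P\|_{L^2(\HH^n_\Gamma)}\lesssim\mu(P)^{1/2}$, the $L^2(\HH^n_\Gamma)$-boundedness of $T$ yields $\|T\nu_1\|_{L^2(\mu)}\lesssim\mu(P)^{1/2}$. For $\nu_2$: at points on the flat part of $\Gamma$, $T\nu_2$ reduces to the classical $L^2(\R^n)$-bounded odd CZ operator with kernel $K(\,\cdot\,,0)$ applied to $\chi_{\widetilde P}$; at points $x=(\widetilde x,A(\widetilde x))$ on the bumpy part, the difference $T\nu_2(\widetilde x,A(\widetilde x))-T\nu_2(\widetilde x,0)$ is controlled uniformly by the gradient bound on $K$ together with $|A(\widetilde x)|\lesssim\ell(\widetilde P)$, giving $\|T\nu_2\|_{L^2(\mu)}\lesssim\mu(P)^{1/2}$.

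For \eqref{condicio mu martingala2}, Cotlar's inequality for $n$-AD regular measures yields $T_*\mu(x)\lesssim M^\mu(|T\mu|^s)(x)^{1/s}+1$ for any $s\in(0,1)$; combined with $T\mu\in L^2(\mu)$ and the boundedness of $M^\mu$ on $L^{2/s}(\mu)$, this gives $T_*\mu\in L^2(\mu)\subset L^1_{loc}(\mu)$. For $T_*(\chi_E\mu)$ with compact $E$, apply the same argument to the finite measure $\chi_E\mu$ using the $L^2(\mu)$-boundedness of $T$. The main technical subtlety lies in estimating $T\nu_2$ on the bumpy part of $\Gamma$ when $\widetilde x$ is near $\partial\widetilde P$, so that the evaluation point $x$ can be close to $\supp\nu_2=\widetilde P\times\{0\}$; this is handled by truncation at scale $|A(\widetilde x)|$ and the gradient bound on $K$.
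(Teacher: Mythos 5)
Your proof is correct in substance and relies on the same two ingredients as the paper's: the $L^2$ boundedness of odd Calder\'on--Zygmund operators on Lipschitz graphs (CMM/David--Semmes) and the cancellation of the odd kernel $K$ against a hyperplane, combined with the gradient bound $|\nabla K|\lesssim|\cdot|^{-n-1}$ to control far-field errors. But you implement these differently. You decompose at the level of the measure, $\mu=\sigma+(\nu_1-\nu_2)$ with $\sigma=\HH^n_L$, $\nu_1=f\chi_P\HH^n_\Gamma$, $\nu_2=\chi_P\sigma$, and bound $T$ on each piece, then recover $T_*\mu\in L^1_{loc}$ via Cotlar. The paper instead bounds $T_*\mu$ from the start, via $T_*\mu\le T_*(\chi_{3P}\mu)+T_*(\chi_{(3P)^c}\mu)$: the first piece by the $L^2(\mu)$-boundedness of $T_*^\mu$, the second piece using that $\chi_{(3P)^c}\mu=\HH^n_{L\setminus 3P}$ together with the exact identity $T_*\HH^n_L\equiv 0$ on $L$ (triangle inequality on $L$) and a comparison with the value at the center $z_P$ on the bumpy part. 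The paper's route is slightly more economical because it proves the genuinely stronger statement $\|T_*\mu\|_{L^2(\mu)}\lesssim\mu(P)^{1/2}$ in one pass and then deduces \eqref{condicio mu martingala3} by $|T\mu|\le T_*\mu$; your route proves \eqref{condicio mu martingala3} directly and then uses Cotlar for \eqref{condicio mu martingala2}. One small slip in your last step: Cotlar gives $T_*\mu\lesssim M^\mu(|T\mu|^s)^{1/s}+M^\mu f$, and since $f\approx 1$ and $\mu(\R^d)=\infty$, the tail term is bounded but not in $L^2(\mu)$, so the intermediate assertion ``$T_*\mu\in L^2(\mu)$'' is false; what you actually get (and all you need) is $T_*\mu\in L^1_{loc}(\mu)$. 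If you want the cleaner $L^2$ statement, replace the Cotlar step by bounding $T_*\sigma$, $T_*\nu_1$, $T_*\nu_2$ directly, using that $T_*\HH^n_L$ vanishes identically on $L$ — which is precisely what the paper does.
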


\begin{remark}\label{remark intergacio infty}
To avoid the problem of non-integrability near infinity, for this
type of measures $\mu$ we redefine
$T_\epsilon\mu(x):=\lim_{M\to\infty}\int\chi_{(\epsilon,M)}(|x-y|)K(x-y)\,d\mu(y)$,
which exists because $\mu$ is flat outside a compact set and $K$ is
odd. All the results in this paper remain valid with this new
definition and the adjustments that have to be done in the proofs
are minimal.

In this paper, we will deal with other integrals which concern the
kernel $K$ and the measure $\mu$ near infinity. The
non-integrability problem can be avoided in the same manner.
\end{remark}

\begin{proof}[Proof of Lemma \ref{lema Tmu}]
It is known that the operator $T_*^\mu$ is bounded in $L^2(\mu)$,
because $T_*^\mu$ is the maximal operator associated to a
Calder\'{o}n-Zygmund singular integral  and $\mu$ is a uniformly
rectifiable measure (see \cite{DS1}). Thus,
$T_*(\chi_E\mu)=T_*^\mu(\chi_E)\in L^1_{loc}(\mu)$ for every compact
set $E\subset\R^d$.

We are going to check that
$\|T_*\mu\|_{L^2(\mu)}\lesssim\mu(P)^{1/2}$.  This will imply that
$T_*\mu\in L^1_{loc}(\mu)$ and, since $T\mu$ exists (because $\mu$
is uniformly rectifiable) and $|T\mu|\leq T_*\mu$, we will also
obtain $\|T\mu\|_{L^2(\mu)}\lesssim\mu(P)^{1/2}$; so the lemma will
be proved.

Using that $T_*^\mu$ is bounded in $L^2(\mu)$, we have
\begin{equation}\label{martingala1}
\begin{split}
\|T_*\mu\|_{L^2(\mu)}&\leq\|T_*(\chi_{3P}\mu)\|_{L^2(\mu)}+\|T_*(\chi_{(3P)^c}\mu)\|_{L^2(\mu)}\\
&\lesssim\mu(P)^{1/2}+\|T_*(\chi_{(3P)^c}\mu)\|_{L^2(\mu)}.
\end{split}
\end{equation}

Set $L:=\R^n\times\{0\}^{d-n}\subset\R^d$; obviously $\chi_{P^c}\mu=\HH^n_{L\setminus P}$.
Since $L$ is an $n$-plane and $K$ is odd, $T_*\HH^n_L(x)=0$ for all $x\in L$. Thus,
\begin{equation}\label{martingala2}
\begin{split}
\|T_*\HH^n_{L\setminus3P}\|_{L^2(\HH^n_L)}\leq\|T_*\HH^n_{L}\|_{L^2(\HH^n_{L})}+\|T_*\HH^n_{L\cap3P}\|_{L^2(\HH^n_L)}
\lesssim\mu(P)^{1/2}.
\end{split}
\end{equation}

Set $z_P:=(\widetilde z_P,0,\ldots,0)\in L$ (recall that $\wit z_P$
denotes the center of $\wit P$) and $\chi_\epsilon(x):=\chi_{(\epsilon,\infty)}(|x|)$. It is obvious that
$\int\chi_{\epsilon}(z_P-y)K(z_P-y)\,d\HH^n_{L\setminus3P}(y)=0$ for
all $\epsilon>0$. Thus, given $x\in\supp\mu\cap P$,
\begin{equation*}
\begin{split}
|(K\chi_{\epsilon}*\HH^n_{L\setminus3P})(x)|&\leq\int\chi_{\epsilon}(x-y)|K(x-y)-K(z_P-y)|\,d\HH^n_{L\setminus3P}(y)\\
&\quad+\int|\chi_{\epsilon}(x-y)-\chi_{\epsilon}(z_P-y)||K(z_P-y)|\,d\HH^n_{L\setminus3P}(y).
\end{split}
\end{equation*}

Since $\Gamma$ is a Lipschitz graph, $|x-z_P|\lesssim\ell(P)$. So,
the first term on right hand side of the previous inequality is
easily bounded by an absolute constant independent of $\epsilon$, by
standard arguments. For the second term, notice that
$\supp(\chi_{\epsilon}(x-\cdot)-\chi_{\epsilon}(z_P-\cdot))\cap(L\setminus
3P)=\emptyset$ for all $\epsilon<\ell(P)$, and
$\HH^n_L(\{y\in\R^n\,:\,\chi_{\epsilon}(x-y)-\chi_{\epsilon}(z_P-y)\neq0\})\lesssim\ell(P)\epsilon^{n-1}$
for all $\epsilon\geq\ell(P)$. Therefore, since
$|z_P-y|\approx\epsilon$ for all
$y\in\supp(\chi_{\epsilon}(x-\cdot)-\chi_{\epsilon}(z_P-\cdot))\cap(L\setminus
3P)$, the second term can also be estimated by an absolute constant.
Thus, we conclude
$T_*\HH^n_{L\setminus3P}(x)=\sup_{\epsilon>0}|(K\chi_\epsilon*\HH^n_{L\setminus3P})(x)|\lesssim1$
for all $x\in\supp\mu\cap P$.

Using the previous observations and (\ref{martingala2}), we have
\begin{equation*}
\begin{split}
\|T_*(\chi_{(3P)^c}\mu)\|^2_{L^2(\mu)}&=\|T_*\HH^n_{L\setminus3P}\|^2_{L^2(\chi_P\mu)}+
\|T_*\HH^n_{L\setminus3P}\|^2_{L^2(\chi_{P^c}\mu)}\\
&\leq\|T_*\HH^n_{L\setminus3P}\|^2_{L^2(\chi_P\mu)}+
\|T_*\HH^n_{L\setminus3P}\|^2_{L^2(\HH^n_{L})}\lesssim\mu(P),
\end{split}
\end{equation*}
which, combined with (\ref{martingala1}), gives $\|T_*\mu\|_{L^2(\mu)}\lesssim\mu(P)^{1/2}$, as desired.
\end{proof}

We are ready to define the martingale. Let $P$ and $\mu$ be as  in
Lemma \ref{lema Tmu}. Given $m\in\Z$ and $a\in\R^{n}$, we set
$$\widetilde D^{\,a}_{m}:=a+[0,2^{-m})^{n}\subset\R^n\quad\text{and}\quad
D^{\,a}_{m}:=\widetilde D^{\,a}_{m}\times\R^{d-n}\subset\R^d.$$ Set
$\DD_m^{\,a}:=\{D^{a+2^{-m}k}_m\subset\R^d\,:\,k\in\Z^n\}$  (notice
that $\DD_m^{\,a}$ coincides with $\DD_m$ translated by a parameter
$a\in\R^n$ and, for a fixed $a$, $\bigcup_{m\in\Z}\DD^{\,a}_m$ is a
translation of the standard dyadic lattice). Notice that $\mu(D_m^{\,a})\approx2^{-mn}$ for all $m\in\Z$, $a\in\R^n$. For $D\in\DD_m^{\,a}$
and $x\in D$, we set
\begin{equation*}
E_D\mu(x):=\frac{1}{\mu(D)}\int_{D}\int_{D^{c}}K(z-y)\,d\mu(y)\,d\mu(z)
\end{equation*}
(take into account Remark \ref{remark intergacio infty} for the
meaning of $\int_{D^{c}}K(z-y)\,d\mu(y)$). Finally, for $x\in\R^d$,
we define the martingale
$E^{\,a}_{m}\mu(x):=\sum_{D\in\DD_m^{\,a}}\chi_{D}(x)E_D\mu(x)$, $m\in\Z$.

Let us make some comments to understand better the nature of
$E^{\,a}_m\mu$. First of all notice that, since $\mu(\partial D)=0$,
for any $D\in\DD_m^{\,a}$ and $\mu$-almost all $z\in D$ we have
\begin{equation}\label{existencia Em1}
\int_{D^c}K(z-y)\,d\mu(y)=\lim_{\epsilon\to0}\int_{D^c}\chi_{\epsilon}(z-y)K(z-y)\,d\mu(y),
\end{equation}
and for any $\epsilon>0$, we have
\begin{equation}\label{existencia Em2}
\int_{D}\int_{D}\chi_{\epsilon}(z-y)K(z-y)\,d\mu(y)\,d\mu(z)=0
\end{equation}
because of the antisymmetry of $K$. Therefore, by (\ref{existencia
Em1}), (\ref{existencia Em2}), (\ref{condicio mu martingala2}), and
the dominated convergence theorem,
$\int_{D}\left|\int_{D^c}K(z-y)\,d\mu(y)\right|d\mu(z)<\infty$ (in
particular, we have seen that $E^{\,a}_{m}\mu$ is well defined) and
$\int_{D}T(\chi_D\mu)\,d\mu=0$. Using this and (\ref{existencia Em1}), we finally have that
\begin{equation}\label{existencia Em4}
\begin{split}
E^{\,a}_{m}\mu(x)=\frac{1}{\mu(D)}\int_{D}T(\chi_{D^c}\mu)\,d\mu=\frac{1}{\mu(D)}\int_{D}T\mu\,d\mu
\end{split}
\end{equation}
for $x\in D\in\DD_m^{\,a}$, thus $E^{\,a}_{m}\mu(x)$ is the average
of the function $T\mu$ on the v-cube $D\in\DD_m^{\,a}$ which contains
$x$. So, it is completely clear that, for a fixed $a\in\R^n$,
$\{E^{\,a}_{m}\mu\}_{m\in\Z}$ is a martingale. In \cite{MV} it is
shown that $\{E^{\,a}_{m}\mu\}_{m\in\Z}$ is well defined and it is a
martingale without the assumption of the existence of $T\mu$ (i.e.,
for more general measures $\mu$).

Now, we can use (\ref{existencia Em4}), the $L^2$ boundedness of
the dyadic maximal operator and (\ref{condicio mu martingala3}) to
deduce that
\begin{equation}\label{existencia Em5}
\|E^{\,a}_{m}\mu\|_{L^2(\mu)}\lesssim \|T\mu\|_{L^2(\mu)}\lesssim\mu(P)^{1/2}
\end{equation}
for all $a\in\R^n$ and $m\in\Z$, where the constants that appear  in
the previous inequalities only depend on $C_0$, $n$, $d$ and
$\Lip(A)$.

Set $E^{\,a}\mu:=\{E^{\,a}_{m}\mu\}_{m\in\Z}$. Then, the martingale
$E^{\,a}\mu$ belongs to $L^2(\mu)$ by (\ref{existencia Em5}); thus
by Theorem \ref{osc-var martingala}, for all $a\in\R^n$,
\begin{equation}\label{existencia Em6}
\begin{split}
\|\VV_\rho(E^{\,a}\mu)\|_{L^2(\mu)}&\lesssim\|E^{\,a}\mu\|_{L^2(\mu)}\lesssim\mu(P)^{1/2} \qquad\text{for }\rho>2,\\
\|\OO (E^{\,a}\mu)\|_{L^2(\mu)}&\lesssim\|E^{\,a}\mu\|_{L^2(\mu)}\lesssim\mu(P)^{1/2},
\end{split}
\end{equation}
where the constants in the previous inequalities only depend  on
$C_0$, $n$, $d$, and $\Lip(A)$ (and on $\rho$, in the
case of $\VV_\rho$).

Finally, for $x\in\R^d$, we define
\begin{equation*}
E_{m}\mu(x):=2^{mn}\int_{\{a\,:\,x\in D^{\,a}_{m}\}}E_{m}^{\,a}\mu(x)\,da
\end{equation*}
(notice that $\LL^n(\{a\,:\,x\in D^{\,a}_{m}\})=2^{-mn}$). Thus,
$E_m\mu$ is an average (of the $m$'th term) of some martingales
depending on a parameter $a\in\R^n$.

Set $E\mu:=\{E_{m}\mu\}_{m\in\Z}$. We want to obtain estimates like
(\ref{existencia Em6})  for $\VV_\rho(E\mu)$ and $\OO(E\mu)$. We
will only show the details for $\VV_\rho(E\mu)$, because the case of
$\OO(E\mu)$ follows by similar arguments.

One can easily check that
$E_m\mu(x)=2^{Mn}\int_{[0,2^{-M}]^n}E_m^{\,a}\mu(x)\,da$ for all
$m,M\in\Z$ with $M\leq m$. Therefore, for all $M,r,s\in\Z$ with
$M\leq r\leq s$, we have
\begin{equation}\label{existencia Em7}
E_r\mu(x)-E_s\mu(x)=2^{Mn}\int_{[0,2^{-M}]^n}(E_r^{\,a}\mu(x)-E_s^{\,a}\mu(x))\,da.
\end{equation}

Given $M\in\Z$, we consider the auxiliary transformation
\begin{equation*}
\VV_{\rho,M}(E\mu)(x):=\sup_{\{r_{m}\}}\bigg(\sum_{m\in\Z}
|E_{r_{m+1}}\mu(x)-E_{r_{m}}\mu(x)|^{\rho}\bigg)^{1/\rho},
\end{equation*}
where the pointwise supremum is taken over all decreasing
sequences of integers $\{r_{m}\}_{m\in\Z}$ such that $r_m\geq M$ for
all $m\in\Z$. With this definition it is obvious that the sequence
$\{\VV_{\rho,M}(E\mu)(x)\}_{M\in\Z}$ is non increasing and
$\VV_{\rho}(E\mu)(x)=\lim_{M\to-\infty}\VV_{\rho,M}(E\mu)(x)$ for
all $x\in\R^d$. Minkowski's integral inequality and (\ref{existencia
Em7}) yield the pointwise estimate
\begin{equation*}
\begin{split}
\VV_{\rho,M}(E\mu)(x)&=\sup_{\{r_m\}\,:\,r_m\geq
M}\bigg(\sum_{m\in\Z}
|E_{r_{m+1}}\mu(x)-E_{r_{m}}\mu(x)|^{\rho}\bigg)^{1/\rho}\\
&\leq 2^{Mn}\int_{[0,2^{-M}]^n}\sup_{\{r_m\}}\bigg(\sum_{m\in\Z}
|E_{r_{m+1}}^{\,a}\mu(x)-E_{r_m}^{\,a}\mu(x))|^{\rho}\bigg)^{1/\rho}da\\
&=2^{Mn}\int_{[0,2^{-M}]^n}\VV_\rho(E^{\,a}\mu)(x)\,da.\\
\end{split}
\end{equation*}
Therefore, by the previous estimate, Minkowski's integral inequality and (\ref{existencia Em6}),
\begin{equation*}
\|\VV_{\rho,M}(E\mu)\|_{L^2(\mu)}\leq2^{Mn}\int_{[0,2^{-M}]^n}\|\VV_\rho(E^{\,a}\mu)\|_{L^2(\mu)}\,da\leq C\mu(P)^{1/2},
\end{equation*}
where $C>0$ only depends on $C_0$, $n$, $d$, $\Lip(A)$,  and $\rho$. By the monotone convergence theorem, we
conclude that $\|\VV_{\rho}(E\mu)\|_{L^2(\mu)}\lesssim\mu(P)^{1/2}.$ Thus we have proved the following theorem (which can be considered
the starting point to prove Theorem \ref{main theorem}):
\begin{teo}\label{osc-var martingala2}
Fix a cube $\widetilde P\subset\R^n$. Set $\Gamma:=\{x\in
\R^d\,:\,x=(\widetilde x,A(\widetilde x))\},$ where
$A:\R^n\to\R^{d-n}$ is a Lipschitz function supported in $\widetilde
P$, and set $P:=\widetilde P\times \R^{d-n}$. Set
$\mu:=f\HH^n_{\Gamma}$, where $f(x)=1$ for all $\widetilde x\in
\widetilde P^c$ and $C_0^{-1}\leq f(x)\leq C_0$ for all $\widetilde
x\in\widetilde P$, for some constant $C_0>0$.

Let $\rho>2$. Then,
there exist constants $C_1,C_2>0$ such that
$\|\VV_{\rho}(E\mu)\|_{L^2(\mu)}\leq C_1\mu(P)^{1/2}$ and
$\|\OO(E\mu)\|_{L^2(\mu)}\leq C_2\mu(P)^{1/2},$ where $C_1$ and
$C_2$ only depend on $C_0$, $n$, $d$, and $\Lip(A)$
(and on $\rho$ in the case of $C_1$).
\end{teo}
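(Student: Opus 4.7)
The plan is to pass from the $L^2$ bound on the Calder\'on--Zygmund operator $T^\mu$ (Lemma \ref{lema Tmu}) to $L^2$ bounds on the $\rho$-variation and oscillation of the averaged martingale $E\mu$, in three stages: (i) bound each fixed-shift martingale $E^{\,a}\mu$ in $L^2(\mu)$; (ii) invoke L\'epingle's inequality (Theorem \ref{osc-var martingala}) to obtain $L^2$ bounds for $\VV_\rho(E^{\,a}\mu)$ and $\OO(E^{\,a}\mu)$ for each $a$; (iii) integrate over the shift parameter $a$ to recover the full operator $E\mu$.

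First I would verify that for each fixed $a\in\R^n$ the sequence $\{E^{\,a}_{m}\mu\}_{m\in\Z}$ really is a martingale in $L^2(\mu)$ with respect to the $\sigma$-algebras generated by the v-cubes $\DD^{\,a}_m$. The key identity is the one already observed in the excerpt, namely
\[
E^{\,a}_{m}\mu(x)=\frac{1}{\mu(D)}\int_{D}T\mu\,d\mu\quad\text{for }x\in D\in\DD^{\,a}_m,
\]
which says that $E^{\,a}_m\mu$ is the dyadic conditional expectation of $T\mu$ with respect to $\DD^{\,a}_m$. This immediately makes it a martingale, and the $L^2$-boundedness of the dyadic maximal operator together with $\|T\mu\|_{L^2(\mu)}\lesssim\mu(P)^{1/2}$ from Lemma \ref{lema Tmu} yields $\sup_{m\in\Z}\|E^{\,a}_m\mu\|_{L^2(\mu)}\lesssim\mu(P)^{1/2}$, uniformly in $a$. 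Applying Theorem \ref{osc-var martingala} to each such martingale then gives, for every $a\in\R^n$,
\[
\|\VV_\rho(E^{\,a}\mu)\|_{L^2(\mu)}+\|\OO(E^{\,a}\mu)\|_{L^2(\mu)}\lesssim\mu(P)^{1/2},
\]
with constants depending only on $C_0,n,d,\Lip(A),\rho$ (and independent of the sequence defining $\OO$).

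The main obstacle is the last step: $E_m\mu$ is defined by averaging $E^{\,a}_m\mu$ over $a$, and one must show that the variation/oscillation \emph{of the average} is controlled by the average of the variations/oscillations. A naive attempt to pull the supremum through the integral is illegitimate because the $a$-integration involves a measurable selection over all decreasing sequences. The device to handle this is the truncated operator
\[
\VV_{\rho,M}(E\mu)(x):=\sup_{\{r_m\}\subset[M,\infty)\cap\Z}\Bigl(\sum_{m\in\Z}|E_{r_{m+1}}\mu(x)-E_{r_m}\mu(x)|^{\rho}\Bigr)^{1/\rho},
\]
and analogously for $\OO$. For fixed $M$ the identity $E_m\mu(x)=2^{Mn}\int_{[0,2^{-M}]^n}E^{\,a}_m\mu(x)\,da$ (valid whenever $M\le m$) converts increments of $E\mu$ into $a$-averages of increments of $E^{\,a}\mu$, and Minkowski's inequality in $\ell^\rho$ legitimately pulls the supremum over $\{r_m\}$ inside the integral. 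Then Minkowski in $L^2(\mu)$ combined with the bound from the previous paragraph yields $\|\VV_{\rho,M}(E\mu)\|_{L^2(\mu)}\lesssim\mu(P)^{1/2}$ uniformly in $M$.

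Finally, I would observe that $M\mapsto \VV_{\rho,M}(E\mu)(x)$ is monotone non-decreasing as $M\to-\infty$ and converges pointwise to $\VV_\rho(E\mu)(x)$; the monotone convergence theorem then delivers $\|\VV_\rho(E\mu)\|_{L^2(\mu)}\lesssim \mu(P)^{1/2}$. The argument for the oscillation operator $\OO$ is parallel: define its truncated version by restricting the sequences $\{\epsilon_m\},\{\delta_m\}$ to $[M,\infty)\cap\Z$, apply the same Minkowski/averaging step with the pair $(\epsilon_m,\delta_m)$ playing the role of $(r_{m+1},r_m)$, and conclude by monotone convergence. Since the constants from Theorem \ref{osc-var martingala} are independent of the sequence defining $\OO$, this property is preserved throughout, which gives the final uniform dependence claimed in the statement.
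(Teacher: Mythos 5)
Your proposal is correct and follows essentially the same route as the paper: identify $E^{\,a}_m\mu$ as the conditional expectation of $T\mu$ over $\DD^{\,a}_m$, invoke Lemma \ref{lema Tmu} and the dyadic maximal function to get a uniform $L^2(\mu)$ bound, apply L\'epingle's inequality (Theorem \ref{osc-var martingala}) to each $E^{\,a}\mu$, and then handle the average over $a$ via the truncated operator $\VV_{\rho,M}$ together with the identity $E_r\mu-E_s\mu=2^{Mn}\int_{[0,2^{-M}]^n}(E^{\,a}_r\mu-E^{\,a}_s\mu)\,da$, two applications of Minkowski's integral inequality, and monotone convergence. This is exactly the argument presented in subsection \ref{ss particular martingale}, including the observation that the $\OO$ case is parallel.
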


We need to introduce additional notation in order to express
$E_m\mu$ in a more convenient way for our purposes. Let
$\mu_1,\ldots,\mu_k$ be a finite collection of positive Borel
measures  such that $\mu_l(D^{\,a}_{m})>0$ for all $a\in\R^n$,
$m\in\Z$ and $l=1,\ldots,k$. Given $m\in\Z$ and
$\,x_1,\ldots,x_i,y_1,\ldots,y_j\in\R^{d}$, we define
\begin{equation*}
\Lambda^{\mu_1,\ldots,\mu_k}_m(x_1,\ldots,x_i\,;\,y_1,\ldots,y_j):=2^{nm}\int_{\left\{a\,:\,x_1,\ldots,x_i\in D^{\,a}_{m},\,y_1,\ldots,y_j\notin D^{\,a}_{m}\right\}}\frac{da}{\prod_{l=1}^k\mu_l(D^{\,a}_{m})}.
\end{equation*}

Then, by Fubini's theorem,
\begin{equation}\label{def1 Em}
\begin{split}
E_{m}\mu(x)&=\int_{\{a\,:\,x\in D^{\,a}_{m}\}}
\frac{2^{mn}}{\mu(D^{\,a}_{m})}\int_{D^{\,a}_{m}}\int_{(D^{\,a}_{m})^{c}}K(z-y)\,d\mu(y)\,d\mu(z)\,da\\
&=\iint\bigg(2^{mn}\int_{\left\{a\,:\,x,z\in D^{\,a}_{m},\,y\notin
D^{\,a}_{m}\right\}}
\frac{da}{\mu(D^{\,a}_{m})}\bigg)K(z-y)\,d\mu(z)\,d\mu(y)\\
&=\iint \Lambda^{\mu}_{m}(x,z\,;\,y)K(z-y)\,d\mu(z)\,d\mu(y).
\end{split}
\end{equation}

\section{Sketch of the proof of Theorem \ref{main theorem}}\label{seccio sketch
proof}

The proof relies on two basic facts: the known $L^2$ boundedness
of the $\rho$-variation and oscillation of martingales explained in the previous section and the
good geometric properties of Lipschitz graphs from a
measure-theoretic point of view.

As we said above, the starting point of the proof is Theorem \ref{osc-var
martingala2}, where the $L^2$ boundedness of the $\rho$-variation and
oscillation (of a convex combination) of some particular martingales is
stated. So, the first step consists in relating the results on
martingales in Theorem \ref{osc-var martingala2} with the $\rho$-variation and oscillation of 
singular integrals on Lipschitz graphs, and this is the aim
of the following two theorems:

\begin{teo}\label{teorema salts llarg martingala}
Let $\Gamma$ and $\mu$ be as in Theorem \ref{osc-var martingala2}. For each $x\in\Gamma$, define
\begin{equation}\label{definicio Wmu}
W\mu(x)^2:=\sum_{m\in\Z}|(K\varphi_{2^{-m}}*\mu)(x)-E_m\mu(x)|^{2}.
\end{equation}
Then, $\left\|W\mu\right\|^{2}_{L^{2}(\mu)}\leq C_1\sum_{Q\in\DD}\big(\,\alpha_\mu(C_2Q)^{2}+\beta_{2,\mu}(Q)^2\,\big)\mu(Q),$ where $C_1,C_2>0$ depend only on $C_0$, $n$, $d$, $K$, and $\Lip(A)$.
\end{teo}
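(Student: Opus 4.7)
I aim to show a scale-by-scale comparison between the smooth truncation $(K\varphi_{2^{-m}}*\mu)(x)$ and the martingale average $E_m\mu(x)$, with the difference controlled by the local flatness of $\mu$ at scale $2^{-m}$. Writing $\Delta_m(x):=(K\varphi_{2^{-m}}*\mu)(x)-E_m\mu(x)$ and using the decomposition
\[\|W\mu\|_{L^{2}(\mu)}^{2}=\sum_{m\in\Z}\sum_{Q\in\DD_{m}}\int_{Q}|\Delta_{m}(x)|^{2}\,d\mu(x),\]
the target is a bound of the form $\int_{Q}|\Delta_{m}|^{2}\,d\mu\lesssim(\alpha_{\mu}(C_{2}Q)^{2}+\beta_{2,\mu}(Q)^{2})\mu(Q)$, possibly plus a tail involving $\alpha_{\mu}(CP)^{2}$ of parent cubes $P\supsetneq Q$ weighted by a geometric factor. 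Such a tail can then be reassembled to the stated Carleson-type right-hand side by a Fubini swap and the packing estimate of Remark \ref{remark packing}.

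\textbf{Flat approximation.} Fix $Q\in\DD_{m}$ and $x\in Q\cap\supp\mu$. Let $L_{Q}$ be an $n$-plane almost minimizing $\alpha_{\mu}(C_{2}Q)$ with associated density $c_{Q}\geq0$, and set $\sigma_{Q}:=c_{Q}\HH^{n}_{L_{Q}}$. Insert this flat reference measure to split
\begin{align*}
\Delta_{m}(x) &=\bigl[(K\varphi_{2^{-m}}*\mu)(x)-(K\varphi_{2^{-m}}*\sigma_{Q})(x)\bigr]\\
&\quad-\bigl[E_{m}\mu(x)-E_{m}\sigma_{Q}(x)\bigr]+\bigl[(K\varphi_{2^{-m}}*\sigma_{Q})(x)-E_{m}\sigma_{Q}(x)\bigr],
\end{align*}
where $E_{m}\sigma_{Q}$ is defined by extending (\ref{def1 Em}) to $\sigma_{Q}$ in place of $\mu$. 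The first two bracketed differences measure the error of replacing $\mu$ by its flat approximation and are controlled by $\alpha$'s; the third is a pure flat-measure term, governed by the oddness of $K$ and the $\beta$ coefficients.

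\textbf{Two types of error.} For the first bracket, split the integration domain in $y$ into the local part inside $C_{\Gamma}Q$ and the far part. Locally, $y\mapsto\varphi_{2^{-m}}(x-y)K(x-y)$ is Lipschitz with constant $\lesssim\ell(Q)^{-(n+1)}$ on the support of $\varphi_{2^{-m}}(x-\cdot)$ (where $|\widetilde{x-y}|\gtrsim 2^{-m}$ forces a uniform lower bound on $|x-y|$), so the Wasserstein-type inequality $\dist_{B_{Q}}(\mu,\sigma_{Q})\lesssim\alpha_{\mu}(C_{2}Q)\ell(Q)^{n+1}$ produces an error of order $\alpha_{\mu}(C_{2}Q)$. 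For the far part, decompose into annuli corresponding to a chain of parent v-cubes $P\supsetneq Q$ and rerun the Lipschitz-Wasserstein estimate on each; the derivative bound in (\ref{eq333}) yields a geometric factor $2^{-(m(Q)-m(P))}$ through the kernel decay, so the tail contributes $\sum_{P\supsetneq Q}2^{-(m(Q)-m(P))}\alpha_{\mu}(CP)$. The second bracket is handled analogously using (\ref{def1 Em}) together with $\mu(D^{\,a}_{m})\approx2^{-mn}$ (where the discrepancy between $\mu(D^{\,a}_{m})$ and its $\sigma_{Q}$-analog is itself absorbed into $\alpha$'s through the same Wasserstein argument). For the third bracket, cancellation of the odd kernel against $\sigma_{Q}$ is exact when $x\in L_{Q}$ and $L_{Q}$ is horizontal; in general, $\dist(x,L_{Q})$ is controlled (after integration over $x\in Q$) by $\beta_{2,\mu}(Q)\ell(Q)$, and the tilt of $L_{Q}$ away from $\R^{n}\times\{0\}^{d-n}$ is controlled by a similar $L^{2}$ flatness quantity. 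A Taylor expansion of $K$ around the projection of $x$ onto $L_{Q}$, combined with Remark \ref{remark alpha2} to compare $L_{Q}$ with the minimizing planes of nearby cubes, produces a pointwise (respectively $L^{2}$) bound of order $\beta_{2,\mu}(Q)+\alpha_{\mu}(C_{2}Q)$.

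\textbf{Main obstacle.} The delicate point is the third bracket: one has to exploit the odd-kernel cancellation for $\sigma_{Q}$ while the smooth truncation $\varphi_{2^{-m}}$ and the v-cube structure $\DD_{m}$ are keyed to the horizontal direction rather than to $L_{Q}$, so the ``naive'' symmetry is broken; showing that the resulting asymmetry costs exactly $\beta_{2,\mu}(Q)$ (and not more) requires careful bookkeeping with the second-derivative estimate in (\ref{eq333}). Once the pointwise/$L^{2}$-over-$Q$ estimate is in place, squaring and integrating over $Q$, then summing over $Q\in\DD$ and swapping the order of summation between $Q$ and the parent cubes $P$ in the tail, yields the claimed bound via Remark \ref{remark packing}.
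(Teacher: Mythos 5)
Your idea of inserting a single flat measure $\sigma_Q$ at the top level and splitting $\Delta_m:=(K\varphi_{2^{-m}}*\mu)(x)-E_m\mu(x)$ into the three brackets is natural, but estimating the first two brackets separately does not work, and the step where you claim a geometric factor $2^{-(m(Q)-m(P))}$ in the far tail of the first bracket is the place where the argument breaks. On the annulus at scale $\ell(P)\sim 2^{-j}$ (with $j<m$), the Lipschitz constant of $y\mapsto K(x-y)$ is $\sim 2^{j(n+1)}$, while the Wasserstein distance between $\mu$ and the fixed plane measure $\sigma_Q$ on $B_P$ is only $\sim 2^{-j(n+1)}\bigl(\alpha(P)+\sum_{Q\subset R\subset P}\alpha(R)\bigr)$ after telescoping through the intermediate cubes, so the product has no gain: the annulus contributes $\alpha(P)+\sum_R\alpha(R)$, and the sum over $j<m$ does not converge. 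In fact, both $(K\varphi_{2^{-m}}*(\mu-\sigma_Q))(x)$ and $E_m\mu(x)-E_m\sigma_Q(x)$ are individually of size $O(1)$ (not $O(\alpha)$), because at scales much larger than $\ell(Q)$ the graph measure $\mu$ and the single plane measure $\sigma_Q$ look completely different; it is only their difference that is small, and your decomposition throws that cancellation away before estimating.

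The paper avoids this by first representing \emph{both} terms as double integrals via the averaged $\Lambda$-function (formula \eqref{def1 Em} and the analogous rewriting of $(K\varphi_{2^{-m}}*\mu)(x)$), and only then subtracting, so that the integrand carries the explicit factor $K(x-y)-K(z-y)$ with the auxiliary variable $z$ constrained to $|x-z|\lesssim 2^{-m}$. It is this factor, combined with $|\partial K(w)|\lesssim|w|^{-n-1}$, that produces the geometric gain $|x-z|/|x-y|\sim 2^{j-m}$ for $y$ at scale $2^{-j}$; see the $F^m_j$ terms in \eqref{R-E eq1} and the estimate \eqref{F eq10}. The flat approximation is then performed \emph{inside} this already-cancelled expression, at the scale relevant to each annulus, rather than once at the top. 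In short: combine your brackets 1 and 2 before localizing, not after.

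Two smaller points. First, your statement that the odd-kernel cancellation for $E_m\sigma_Q(x)$ and $(K\varphi_{2^{-m}}*\sigma_Q)(x)$ is exact only when $x\in L_Q$ \emph{and} $L_Q$ is horizontal is too restrictive: the cancellation holds whenever $x\in L_Q$, tilt or no tilt, because the average over the martingale parameter $a$ together with the fact that $\varphi$ and the v-cube constraints depend only on the first $n$ coordinates yields a reflection symmetry through $\tilde x$ (this is exactly what \eqref{eq1}--\eqref{eq2} implement). Consequently the "tilt of $L_Q$" does not need to be controlled by a flatness quantity — and indeed it cannot be, since it is generically of size $\Lip(A)$, not $o(1)$. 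Second, the treatment of the second bracket is genuinely delicate because changing $\mu\to\sigma_Q$ also changes the normalization $\mu(D^a_m)$ inside $\Lambda^\mu_m$; this needs a separate splitting (cf.\ the terms $F3^m_j$, $G4^m_j$, $G5^m_j$ in the paper), not just "the same Wasserstein argument."
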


\begin{teo}\label{teorema salts curt martingala}
Let $\Gamma$ and $\mu$ be as in Theorem \ref{osc-var martingala2}. For each $x\in\Gamma$, define
\begin{equation}\label{definicio Smu}
S\mu(x)^2:=\sup_{\{\epsilon_{m}\}}\sum_{j\in\Z}
\,\sum_{m\in\Z:\,\epsilon_{m},\epsilon_{m+1}\in I_j}
|(K\varphi_{\epsilon_{m+1}}^{\,\epsilon_{m}}*\mu)(x)|^{2},
\end{equation}
where $I_j=[2^{-j-1},2^{-j})$ and the supremum is taken over all decreasing
sequences of positive numbers $\{\epsilon_{m}\}_{m\in\Z}$.
Then, $\left\|S\mu\right\|^{2}_{L^{2}(\mu)}\leq
C\sum_{Q\in\DD}\big(\,\alpha_\mu(Q)^{2}+\beta_{2,\mu}(Q)^2\,\big)\mu(Q)$, where $C>0$ only
depends on $C_0$, $n$, $d$, $K$, and $\Lip(A)$.
\end{teo}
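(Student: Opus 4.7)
My plan is to convert the supremum defining $S\mu$ into a single integral in $t$, to bound the integrand pointwise via the $\alpha$-coefficients, and to conclude with the $\alpha$-packing estimate. First, I would introduce $\Psi_t(z):=-\partial_t\varphi_t(z)=(|\wit z|/t^2)\,\varphi_\R'(|\wit z|/t)$, so that $\varphi_\epsilon^\delta=\int_\epsilon^\delta\Psi_t\,dt$ for $\epsilon<\delta$. The function $\Psi_t$ is a non-negative smooth bump depending only on $|\wit z|$, supported in $\{|\wit z|\in[2.1\sqrt n\,t,3\sqrt n\,t]\}$, with $\|\Psi_t\|_\infty\lesssim 1/t$ and $\|\nabla\Psi_t\|_\infty\lesssim 1/t^2$. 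Fubini then yields $(K\varphi_{\epsilon_{m+1}}^{\epsilon_m}*\mu)(x)=\int_{\epsilon_{m+1}}^{\epsilon_m}(K\Psi_t*\mu)(x)\,dt$. Applying Cauchy--Schwarz on this integral and using that the intervals $(\epsilon_{m+1},\epsilon_m)\subset I_j$ are disjoint and of length at most $|I_j|=2^{-j-1}$ gives, uniformly in $\{\epsilon_m\}$,
\[
\sum_{m:\,\epsilon_m,\epsilon_{m+1}\in I_j}|(K\varphi_{\epsilon_{m+1}}^{\epsilon_m}*\mu)(x)|^2\lesssim 2^{-j}\int_{I_j}|(K\Psi_t*\mu)(x)|^2\,dt,
\]
and summing over $j$ (with $2^{-j}\sim t$ on $I_j$) yields $S\mu(x)^2\lesssim\int_0^\infty t\,|(K\Psi_t*\mu)(x)|^2\,dt$.

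The heart of the proof is the pointwise bound I aim for on $(K\Psi_t*\mu)(x)$. Given $x\in\Gamma$ and $t\in I_j$, I would take $Q=Q_j(x)\in\DD_j$ to be the v-cube containing $x$ and let $(c_Q,L_Q)$ be a minimizing pair for $\alpha_\mu(Q)$. Splitting
\[
(K\Psi_t*\mu)(x)=(K\Psi_t*(\mu-c_Q\HH^n_{L_Q}))(x)+c_Q\,(K\Psi_t*\HH^n_{L_Q})(x),
\]
the first summand is handled by noting that the test function $y\mapsto K(x-y)\Psi_t(x-y)$ has Lipschitz constant $\lesssim t^{-n-2}$ and, after a smooth cutoff to $B_Q$ (which does not affect either integral since the support of $\Psi_t(x-\cdot)$ on $\Gamma\cup L_Q$ lies within $B_Q$), the definition of $\alpha_\mu(Q)$ produces a bound $\lesssim t^{-n-2}\cdot\alpha_\mu(Q)\ell(Q)^{n+1}\sim\alpha_\mu(Q)/t$. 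For the second summand, I intend to exploit that $K\Psi_t$ is odd: $K$ is odd and $\Psi_t(z)$ depends only on $|\wit z|$, hence is even. Translating $L_Q$ by $-\pi_{L_Q}(x)$ to a plane through the origin and substituting $z\mapsto -z$ then shows $(K\Psi_t*\HH^n_{L_Q})(\pi_{L_Q}(x))=0$. A first-order Taylor expansion around $\pi_{L_Q}(x)$, together with $\|\nabla(K\Psi_t)\|_\infty\lesssim t^{-n-2}$ and the mass bound $\HH^n(L_Q\cap\supp\Psi_t(x-\cdot))\lesssim t^n$, will give $|(K\Psi_t*\HH^n_{L_Q})(x)|\lesssim\dist(x,L_Q)/t^2$. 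Combining,
\[
|(K\Psi_t*\mu)(x)|^2\lesssim\alpha_\mu(Q_j(x))^2/t^2+\dist(x,L_{Q_j(x)})^2/t^4.
\]

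Finally, I would integrate these two pieces in $t$ and $x$. For the $\alpha$-piece, $\int_{I_j}dt/t\sim 1$, so $\int_\Gamma\sum_j\alpha_\mu(Q_j(x))^2\,d\mu=\sum_{Q\in\DD}\alpha_\mu(Q)^2\mu(Q)$, which is finite by Remark \ref{remark packing}. For the distance piece, $\int_{I_j}dt/t^3\sim 2^{2j}$, so I need to estimate $\sum_j 2^{2j}\dist(x,L_{Q_j(x)})^2$. Here I would invoke Remark \ref{remark alpha2}: $\dist(x,L_{Q_j(x)})\lesssim\sum_{R\in\DD:\,x\in R\subset Q_j(x)}\alpha_\mu(R)\ell(R)$. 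Writing $R_k(x)\in\DD_k$ for the dyadic v-cube containing $x$ at scale $2^{-k}$ and applying Cauchy--Schwarz (with the geometric sum $\sum_{k\geq j}\ell(R_k(x))\lesssim 2^{-j}$), I obtain $\dist(x,L_{Q_j(x)})^2\lesssim 2^{-j}\sum_{k\geq j}\alpha_\mu(R_k(x))^2 2^{-k}$. Swapping the order of summation (using $\sum_{j\leq k}2^j\lesssim 2^k$) gives $\sum_j 2^{2j}\dist(x,L_{Q_j(x)})^2\lesssim\sum_k\alpha_\mu(R_k(x))^2$, which integrated in $x$ yields $\sum_Q\alpha_\mu(Q)^2\mu(Q)$ once more.

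The hard part will be the pointwise bound in the middle step, where extracting enough cancellation from the oddness of $K$ to produce $\dist(x,L_Q)/t^2$ (rather than the trivial $O(1/t)$) is essential; without this, the $t$-integral diverges logarithmically. The rest is a careful but routine dyadic packing computation based on Remark \ref{remark alpha2}; in particular, the $\beta_{2,\mu}$ term appearing on the right-hand side of the theorem is not actually needed (it is dominated trivially).
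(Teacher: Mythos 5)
Your proposal is correct and takes a route that is genuinely (if modestly) different from the paper's. The key pointwise estimates are the same in spirit: you split $K\Psi_t*\mu$ against the minimizing measure $c_Q\HH^n_{L_Q}$ for $\alpha_\mu(Q)$, bound the Wasserstein piece by the Lipschitz constant of $y\mapsto K(x-y)\Psi_t(x-y)$ to get $\alpha_\mu(Q)/t$, and extract the cancellation from the odd kernel on the flat plane to get $\dist(x,L_Q)/t^2$. This mirrors what the paper does with the discrete truncations $\varphi_{\epsilon_{m+1}}^{\,\epsilon_m}$, where the analogous estimates are $2^j|\epsilon_m-\epsilon_{m+1}|\alpha(D)$ and $2^{2j}|\epsilon_m-\epsilon_{m+1}|\dist(x,L_D)$ (the paper translates $L_D$ to a parallel plane through $x$, you project $x$ onto $L_Q$ and do a mean-value estimate; these are the same cancellation). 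Where you genuinely differ is in two places, both of which are clean improvements. First, by writing $\varphi_\epsilon^\delta=\int_\epsilon^\delta\Psi_t\,dt$ and applying Cauchy--Schwarz, you remove the supremum over sequences at the outset and dominate $S\mu$ by a fixed continuous square function $\int_0^\infty t\,|(K\Psi_t*\mu)(x)|^2\,dt$; the paper instead fixes a near-maximizing sequence $\{\epsilon_m\}$ depending on $x$ and keeps it through the whole argument, bounding $\sum_m(|\epsilon_m-\epsilon_{m+1}|/2^{-j})^2\leq 1$ at the end. These two maneuvers are morally the same use of convexity, but yours decouples the sequence from the pointwise work and also sidesteps the measurability issue the paper has to remark on. Second, and more substantively, for the distance piece $\sum_j 2^{2j}\dist(x,L_{Q_j(x)})^2$ you invoke Remark \ref{remark alpha2} and a Cauchy--Schwarz over scales to dominate it by $\sum_k\alpha_\mu(R_k(x))^2$, whereas the paper compares $L_D$ to the $\beta_2$-minimizing plane and pays a $\beta_{2,\mu}(D)^2$ term (the same $W_1\mu$ term appears in both theorems and the paper handles it once via the $\beta_2$ comparison). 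Your route shows the $\beta_{2,\mu}$ term on the right-hand side is in fact superfluous here, which is a slightly stronger conclusion, though of course consistent with the theorem as stated. The one implicit hypothesis you rely on that deserves a word is that $L_Q$ is a graph over $\R^n$ with uniformly bounded slope (so that $\HH^n(L_Q\cap\supp\Psi_t(x-\cdot))\lesssim t^n$ and $\HH^n_{L_Q}(B_Q)<\infty$); this is automatic once $\alpha_\mu(Q)\lesssim 1$, which holds here, but it is worth stating. Also note that no smooth cutoff to $B_Q$ is needed at all: $\Psi_t(x-\cdot)$ is already supported in the tube $B_Q$ by the choice of $C_\Gamma$, so the $\alpha$-distance applies directly to $K(x-\cdot)\Psi_t(x-\cdot)$.
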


Two fundamental tools to study $W\mu$ and $S\mu$ are the $\alpha$
and $\beta$ coefficients, which will be used to measure the flatness
of $\Gamma$ at different scales, in order to estimate the terms
which appear  in the sums in (\ref{definicio Wmu}) and
(\ref{definicio Smu}). This will be done in sections \ref{s teorema
salts llarg martingala} and \ref{s teorema salts curt martingala}.
To use the $\alpha$ coefficients to relate the $\rho$-variation of
martingales with the $\rho$-variation of singular integrals, it is a
key fact that we are considering a ``smooth'' family like
$\varphi$, because the $\alpha$'s are
defined in terms of Lipschitz functions but $T_\epsilon$ is defined by means of a rough truncation. Moreover, we are taking a
truncation only on the first $n$-coordinates because the average of martingales that we are
using is taken over the parameter $a\in\R^n$, using the v-cubes
$D_M^{\,a}$ (see subsection \ref{ss particular martingale}).

Combining Theorem \ref{teorema salts llarg martingala} and Theorem
\ref{teorema salts curt martingala} with the $L^2$ estimates of the
$\rho$-variation and oscillation on the average of martingales
$E\mu$ in Theorem \ref{osc-var martingala2}, we are able to obtain
local $L^2$ estimates of
$\VV_\rho\circ\TT_{\varphi}^{\HH^n_\Gamma}$ and
$\OO\circ\TT_{\varphi}^{\HH^n_\Gamma}$ when $\Gamma$ is any
Lipschitz graph. More precisely, we separate the sum in the
definition of  $\VV_\rho\circ\TT_{\varphi}^{\HH^n_\Gamma}$
into two parts, which are classically called short and long
variation (and analogously for
$\OO\circ\TT_{\varphi}^{\HH^n_\Gamma}$). The short variation
corresponds to the sum $S\mu$ in Theorem \ref{teorema salts curt
martingala} (here $\mu$ is a suitable modification of
${\HH^n_\Gamma}$), where the indices run over $m\in\Z$ such that
both $\epsilon_m$ and $\epsilon_{m+1}$ lie in the same dyadic
interval, and can be handled using the $\alpha$'s and $\beta$'s. The
long variation corresponds to the sum over the indices $m\in\Z$ such
that $\epsilon_m$ and $\epsilon_{m+1}$ lie in different dyadic
intervals, so one may assume that the $\epsilon_m$'s are dyadic
numbers. It is handled by comparing $K\varphi_{2^{-m}}*\mu$ with
$E_m\mu$, and then using Theorem \ref{teorema salts llarg
martingala} and the fact the $\rho$-variation and oscillation of
$E\mu$ are bounded in $L^2(\mu)$, by Theorem \ref{osc-var
martingala2}. This will be done in section \ref{seccio localitzacio}
(see Theorem \ref{teorema localitzacio}).

Using the local $L^2$ estimates of Theorem \ref{teorema
localitzacio}, combined with rather standard techniques in
Calder\'{o}n-Zygmund theory, in section \ref{seccio Lp suau} we
obtain the $H^1({\HH^n_\Gamma})\to L^1({\HH^n_\Gamma})$ and
$L^\infty({\HH^n_\Gamma})\to BMO({\HH^n_\Gamma})$ boundedness of
$\VV_\rho\circ\TT_{\varphi}^{\HH^n_\Gamma}$ and
$\OO\circ\TT_{\varphi}^{\HH^n_\Gamma}$. Then, by
interpolation, we obtain the $L^p$ boundedness of these operators in
the whole range $1<p<\infty$, and in particular the $L^2$
boundedness (see Theorem \ref{teorema interpolacio}). Moreover, \cite[Theorem B]{CJRW-singular integrals} can be adapted to prove that the $L^2(\HH^n_\Gamma)$ boundedness of $\VV_\rho\circ\TT_{\varphi}^{\HH^n_\Gamma}$ and
$\OO\circ\TT_{\varphi}^{\HH^n_\Gamma}$ also yields the boundedness of these operators from $L^{1}(\HH^n_\Gamma)$ to $L^{1,\infty}(\HH^n_\Gamma)$.

Let us stress that almost all the estimates in the proof of
Theorem \ref{main theorem} (in particular, the constants involved in
the relationships $\lesssim$, $\gtrsim$ and $\approx$) depend
either on $n$, $d$, $K$ or $\Lip(A)$, and possibly on other
variables such as $\rho$ or $p$.

\section{Proof of Theorem \ref{teorema salts llarg martingala}}\label{s teorema salts llarg martingala}

In order to study the difference $(K
\varphi_{2^{-m}}*\mu)(x)-E_m\mu(x)$, we  are going to split
$E_m\mu(x)$ into two parts, the one we will compare with
$(K\varphi_{2^{-m}}*\mu)(x)$ (which corresponds to
integrate, in the definition of $E_m\mu(x)$, over the points
$y\in\R^d$ such that $2^{-m}\lesssim|\widetilde x-\widetilde y|$),
and the remaining part. Then, we will estimate each part of
$(K\varphi_{2^{-m}}*\mu)(x)-E_m\mu(x)$ separately, using
the cancelation properties of the kernel $K$ and the uniform
rectifiability of $\mu$.

Recall from (\ref{def1 Em}) that
$E_{m}\mu(x)=\iint \Lambda^\mu_{m}(x,z\,;\,y)K(z-y)\,d\mu(z)\,d\mu(y).$
Given $\epsilon>0$, we set $\gamma_\epsilon:=1-\varphi_\epsilon$. Then,
\begin{equation*}
\begin{split}
E_{m}\mu(x)&=\iint \varphi_{2^{-m}}(x-y)\Lambda^\mu_{m}(x,z\,;\,y)K(z-y)\,d\mu(z)\,d\mu(y)\\
&\quad+\iint \gamma_{2^{-m}}(x-y)\Lambda^\mu_{m}(x,z\,;\,y)K(z-y)\,d\mu(z)\,d\mu(y).
\end{split}
\end{equation*}

The first term in the previous sum is the one that we will compare
with $(K\varphi_{2^{-m}}*\mu)(x)$. For all $a\in\R^{n}$
such that $x\in D^{\,a}_{m}$, we have
$\supp\,\varphi_{2^{-m}}(x-\cdot)\cap
D^{\,a}_{m}=\emptyset$, and thus
$(K\varphi_{2^{-m}}*\mu)(x)=(K\varphi_{2^{-m}}*(\chi_{(D^{\,a}_{m})^{c}}\mu))(x)$.
Hence, using Fubini's theorem and the definition of
$\Lambda^\mu_{m}(x,z\,;\,y)$,
\begin{equation*}
\begin{split}
(K\varphi_{2^{-m}}*\mu)(x)&=2^{mn}\int_{\{a\,:\,x\in D^{\,a}_{m}\}}(K\varphi_{2^{-m}}*(\chi_{(D^{\,a}_{m})^{c}}\mu))(x)\,da\\
&=2^{mn}\int_{\{a\,:\,x\in D^{\,a}_{m}\}}\mu(D^{\,a}_{m})^{-1}\int_{D^{\,a}_{m}}(K\varphi_{2^{-m}}*(\chi_{(D^{\,a}_{m})^{c}}\mu))(x)\,d\mu(z)\,da\\
&=\iint \varphi_{2^{-m}}(x-y)\Lambda^\mu_{m}(x,z\,;\,y)K(x-y)\,d\mu(z)\,d\mu(y).
\end{split}
\end{equation*}

We can decompose $(K\varphi_{2^{-m}}*\mu)(x)-E_m\mu(x)$ as
\begin{equation}\label{R-E eq1}
\begin{split}
(K\varphi_{2^{-m}}&*\mu)(x)-E_m\mu(x)\\
&=\iint \varphi_{2^{-m}}(x-y)\Lambda^\mu_{m}(x,z\,;\,y)(K(x-y)-K(z-y))\,d\mu(z)\,d\mu(y)\\
&\quad-\iint \gamma_{2^{-m}}(x-y)\Lambda^\mu_{m}(x,z\,;\,y)K(z-y)\,d\mu(z)\,d\mu(y)\\
&=\sum_{j<m}F_{j}^{m}(x)-\sum_{j\in\Z}G_{j}^{m}(x),
\end{split}
\end{equation}
where
\begin{gather}
F_{j}^{m}(x):=\iint \varphi^{\,2^{-j}}_{2^{-j-1}}(x-y)\Lambda^\mu_{m}(x,z\,;\,y)
(K(x-y)-K(z-y))\,d\mu(z)\,d\mu(y),\\
G_{j}^{m}(x):=\iint\varphi^{\,2^{-j}}_{2^{-j-1}}(z-y)\gamma_{2^{-m}}(x-y)
\Lambda^\mu_{m}(x,z\,;\,y)K(z-y)\,d\mu(z)\,d\mu(y).\label{R-E eq5}
\end{gather}

Fix a v-cube $D\in\DD_m$, for some $m\in\Z$. In subsection \ref{F s1} (see (\ref{F estimacio final})) we will prove that
\begin{equation}\label{F estimacio final2}
\sum_{j<m}|F_{j}^{m}(x)|\lesssim\frac{\dist(x,L_D)}{\ell(D)}+\sum_{Q\in\DD\,:\,D\subset Q}\frac{\ell(D)}{\ell(Q)}\,\alpha(Q)
\end{equation}
for all $x\in D\cap\Gamma$, where $L_D$ denotes an $n$-plane that minimizes $\alpha(D)$, and in subsection \ref{G s1} (see (\ref{G estimacio final})) we will prove that there exists a constant $C_b>1$ such that
\begin{equation}\label{G estimacio final2}
\sum_{j\in\Z}|G_j^m(x)|\lesssim\alpha(C_bD)+
\sum_{Q\in\DD\,:\,Q\subset C_bD}\frac{\ell(Q)^{n+1}}{\ell(D)^{n+1}}\,\alpha(Q)
\end{equation}
for all $x\in D\cap\Gamma$. Assuming that these estimates hold, by (\ref{R-E eq1}),
\begin{equation}\label{R-E eq6}
\begin{split}
\|W\mu\|^{2}_{L^{2}(\mu)}&=\sum_{m\in\Z}\sum_{D\in\DD_m}\int_D|(K\varphi_{2^{-m}}*\mu)(x)-E_m\mu(x)|^{2}\,d\mu(x)\\
&\lesssim\sum_{D\in\DD}\int_D\bigg(\frac{\dist(x,L_D)}{\ell(D)}\bigg)^2\,d\mu(x)
+\sum_{D\in\DD}\bigg(\sum_{\begin{subarray}{c}Q\in\DD:\\D\subset Q\end{subarray}}\frac{\ell(D)}{\ell(Q)}\,\alpha(Q)\bigg)^2\mu(D)\\
&\quad+\sum_{D\in\DD}\alpha(C_bD)^2\mu(D)+\sum_{D\in\DD}\bigg(
\sum_{\begin{subarray}{c}Q\in\DD:\\Q\subset C_bD\end{subarray}}\frac{\ell(Q)^{n+1}}{\ell(D)^{n+1}}\,\alpha(Q)\bigg)^2\mu(D)\\
&=:W_1\mu+W_2\mu+W_3\mu+W_4\mu.
\end{split}
\end{equation}

If $L_D^1$ and $L_D^2$ denote a minimizing $n$-plane for $\beta_1(D)$ and $\beta_2(D)$, respectively, one can show that
$\dist_\HH(L_D\cap C_\Gamma D,L_D^1\cap C_\Gamma D)\lesssim\alpha(D)\ell(D)$ and $\dist_\HH(L_D^1\cap C_\Gamma D,L_D^2\cap C_\Gamma D)\lesssim\beta_2(D)\ell(D)$. This easily implies that, for $x\in D\cap\Gamma$, $\dist(x,L_D)\lesssim\dist(x,L^2_D)+\beta_2(D)\ell(D)+\alpha(D)\ell(D)$, so  $W_1\mu\lesssim\sum_{D\in\DD}(\alpha(D)^2+\beta_2(D)^2)\mu(D).$

By Cauchy-Schwarz inequality,
\begin{equation*}
\begin{split}
W_2\mu&\leq\sum_{D\in\DD}\mu(D)
\bigg(\sum_{Q\in\DD:\,D\subset Q}\frac{\ell(D)}{\ell(Q)}\,\alpha(Q)^2\bigg)
\bigg(\sum_{Q\in\DD:\,D\subset Q}\frac{\ell(D)}{\ell(Q)}\bigg)\\
&\approx\sum_{D\in\DD}\,\sum_{Q\in\DD:\,D\subset Q}\frac{\ell(D)^{n+1}}{\ell(Q)}\,\alpha(Q)^{2}\approx
\sum_{Q\in\DD}\alpha(Q)^{2}\mu(Q),
\end{split}
\end{equation*}
and also
\begin{equation*}
\begin{split}
W_4\mu&\leq\sum_{D\in\DD}\mu(D)
\bigg(\sum_{Q\in\DD:\,Q\subset C_bD}\frac{\ell(Q)^{n+1}}{\ell(D)^{n+1}}\,\alpha(Q)^2\bigg)
\bigg(\sum_{Q\in\DD:\,Q\subset C_bD}\frac{\ell(Q)^{n+1}}{\ell(D)^{n+1}}\bigg)\\
&\approx\sum_{D\in\DD}\,\sum_{Q\in\DD:\,Q\subset C_bD}\ell(Q)^n\frac{\ell(Q)}{\ell(D)}\,\alpha(Q)^2
\lesssim\sum_{Q\in\DD}\alpha(Q)^2\mu(Q).\\
\end{split}
\end{equation*}
Therefore, using (\ref{R-E eq6}) and that $\alpha(Q)\lesssim\alpha(C_b Q)$, we conclude that
$$\|W\mu\|^{2}_{L^{2}(\mu)}\lesssim\sum_{Q\in\DD}(\alpha(C_bQ)^2+\beta_2(Q)^2)\mu(Q),$$
and the theorem follows. It only remains to prove (\ref{F estimacio final2}) and (\ref{G estimacio final2}).

\subsection{Estimate of $\sum_{j<m}F_{j}^{m}(x)$ when $x\in D\cap\Gamma$ for some $D\in\DD_m$}\label{F s1}
Assume that $x\in D\cap\Gamma$ for some $D\in\DD_{m}$ and $j<m$.
Let $L_{D}$ be an $n$-plane that minimizes $\alpha(D)$ and let $\sigma_{D}:=c_{D}\HH^{n}_{L_{D}}$ be a minimizing measure of $\alpha(D)$. Let $L_D^x$ be the $n$-plane parallel to $L_D$ that contains $x$ and set $\sigma^x_{D}:=c_{D}\HH^{n}_{L^x_{D}}$.

Notice that, because of $x\in L_D^x$, the antisymmetry of $\varphi^{\,2^{-j}}_{2^{-j-1}}K$, and since $j<m$ (so, if $x\in D^{\,a}_m$ and $y\in\supp\varphi^{\,2^{-j}}_{2^{-j-1}}(x-\cdot)$, then $y\notin D^{\,a}_m$), we have
\begin{equation}\label{eq1}
\begin{split}
0&=\int\varphi^{\,2^{-j}}_{2^{-j-1}}(x-y)K(x-y)\,d\sigma_{D}^x(y)\\
&=\int_{\{a\,:\,x\in D^{\,a}_{m}\}}\frac{2^{mn}}{\sigma_D^x(D^{\,a}_{m})}\int_{D^{\,a}_{m}}\int_{(D^{\,a}_{m})^c}
\varphi^{\,2^{-j}}_{2^{-j-1}}(x-y)K(x-y)\,d\sigma_{D}^x(y)\,d\sigma_{D}^x(z)\,da\\
&=\iint\varphi^{\,2^{-j}}_{2^{-j-1}}(x-y)\Lambda_m^{\sigma_D^x}(x,z\,;\,y)K(x-y)\,d\sigma_{D}^x(z)\,d\sigma_{D}^x(y).
\end{split}
\end{equation}

Given $a\in\R^n$, let $b:=a+\{2^{-m-1}\}^{n}\in\R^n$ be the center of $\wit D_m^{\,a}$. For $u\in\R^n$ we denote $\|u\|_\infty:=\max_{i=1,\ldots,n}|u^i|$. Then, given $t\in\R^d$, it is clear that $t\in \overline{D_m^{\,a}}$ if and only if $\|\tilde t-b\|_\infty\leq2^{-m}$. Using that $\sigma_D^x$ is a Hausdorff measure on an $n$-plane, that $K$ is antisymmetric and that $\varphi^{\,2^{-j}}_{2^{-j-1}}$ is symmetric, one can show that
\begin{equation*}
\begin{split}
0&=\int_{\|\wit x-b\|_\infty\leq2^{-m}}
\int_{\|\wit z-b\|_\infty\leq2^{-m}}\int_{\|\wit y-b\|_\infty>2^{-m}}
\varphi^{\,2^{-j}}_{2^{-j-1}}(x-y)K(z-y)
\,d\sigma_{D}^x(y)\,d\sigma_{D}^x(z)\,db.
\end{split}
\end{equation*}
By the change of variable $b=a+\{2^{-m-1}\}^{n}$, it is easy to see
that this triple integral is equal to  $\int_{\{a\,:\,x\in
D^{\,a}_{m}\}}\int_{D^{\,a}_{m}}\int_{(D^{\,a}_{m})^c}\varphi^{\,2^{-j}}_{2^{-j-1}}(x-y)K(z-y)
\,d\sigma_{D}^x(y)\,d\sigma_{D}^x(z)\,da$. Thus, since
$\sigma_D^x(D^{\,a}_{m})$ does not depend on $a\in\R^n$ because
$\sigma_D^x$ is flat,
\begin{equation}\label{eq2}
\begin{split}
0&=\int_{\{a\,:\,x\in D^{\,a}_{m}\}}\frac{2^{mn}}{\sigma_D^x(D^{\,a}_{m})}\int_{D^{\,a}_{m}}\int_{(D^{\,a}_{m})^c}\varphi^{\,2^{-j}}_{2^{-j-1}}(x-y)K(z-y)
\,d\sigma_{D}^x(y)\,d\sigma_{D}^x(z)\,da\\
&=\iint\varphi^{\,2^{-j}}_{2^{-j-1}}(x-y)\Lambda_m^{\sigma_D^x}(x,z\,;\,y)K(z-y)\,d\sigma_{D}^x(z)\,d\sigma_{D}^x(y).
\end{split}
\end{equation}

By (\ref{eq1}) and (\ref{eq2}), we conclude that
\begin{equation}\label{eq3}
0=\iint\varphi^{\,2^{-j}}_{2^{-j-1}}(x-y)\Lambda_m^{\sigma_D^x}(x,z\,;\,y)(K(x-y)-K(z-y))
\,d\sigma_{D}^x(z)\,d\sigma_{D}^x(y).
\end{equation}

By definition, it is clear that $\Lambda_m^{\sigma_D^x}(x,z\,;\,y)=\Lambda_m^{\sigma_D}(x,z\,;\,y)$. Therefore, using (\ref{eq3}), we can decompose
\begin{equation}\label{F eq9}
F_j^m(x)=F1_j^m(x)+F2_j^m(x)+F3_j^m(x)+F4_j^m(x),
\end{equation}
where
\begin{multline}\label{F eq5}
F1_j^m(x):=\iint\varphi^{\,2^{-j}}_{2^{-j-1}}(x-y)\Lambda_m^{\mu}(x,z\,;\,y)\\
(K(x-y)-K(z-y))\,d(\mu-\sigma_D)(z)\,d\mu(y),
\end{multline}
\vskip-20pt
\begin{multline}\label{F eq6}
F2_j^m(x):=\iint\varphi^{\,2^{-j}}_{2^{-j-1}}(x-y)\Lambda_m^{\mu}(x,z\,;\,y)\\
(K(x-y)-K(z-y))\,d\sigma_D(z)\,d(\mu-\sigma_D)(y),
\end{multline}
\vskip-20pt
\begin{multline}
F3_j^m(x):=\iint\varphi^{\,2^{-j}}_{2^{-j-1}}(x-y)
(\Lambda_m^{\mu}(x,z\,;\,y)-\Lambda_m^{\sigma_D}(x,z\,;\,y))\\
(K(x-y)-K(z-y))\,d\sigma_D(z)\,d\sigma_D(y),
\end{multline}
\vskip-20pt
\begin{multline}\label{F eq8}
F4_j^m(x):=\iint\varphi^{\,2^{-j}}_{2^{-j-1}}(x-y)\Lambda_m^{\sigma_D^x}(x,z\,;\,y)\\
(K(x-y)-K(z-y))\,d(\sigma_D\times\sigma_D-\sigma_D^x\times\sigma_D^x)(z,y).
\end{multline}

In the next subsections we will prove the following estimates:
\begin{gather}
|F1^m_j(x)|+|F3^m_j(x)|\lesssim2^{j-m}\alpha(D), \label{F eq10}\\
|F2^m_j(x)|\lesssim2^{j-m}\sum_{Q\in\DD\,:\,D\subset Q,\,\ell(Q)\leq2^{-j}}\alpha(Q), \label{F eq11}\\
|F4^m_j(x)|\lesssim2^{j-m}\,\frac{\dist(x,L_D)}{\ell(D)}. \label{F eq13}
\end{gather}
Then, using (\ref{F eq9}), we will finally get that, for all $D\in\DD_m$ and $x\in D\cap\Gamma$,
\begin{equation}\label{F estimacio final}
\begin{split}
\sum_{j<m}|F_{j}^{m}(x)|&\lesssim\frac{\dist(x,L_D)}{\ell(D)}+\sum_{j\leq m}2^{j-m}\sum_{Q\in\DD:\,
D\subset Q,\,\ell(Q)\leq2^{-j}}\alpha(Q)\\
&\lesssim\frac{\dist(x,L_D)}{\ell(D)}+\sum_{Q\in\DD\,:\,D\subset Q}\frac{\ell(D)}{\ell(Q)}\,\alpha(Q),
\end{split}
\end{equation}
which gives (\ref{F estimacio final2}).

\subsubsection{{\bf Estimate of $F1_{j}^{m}(x)$}}\label{F ss1}
Notice that, if $|\widetilde x-\widetilde z|>2^{-m}\sqrt{n}$, there is no $a\in\R^n$ such that $x,z\in D_m^a$, and this means that $\Lambda_m^{\mu}(x,z\,;\,y)=0$. Thus, we can assume that $|\widetilde x-\widetilde z|\leq2^{-m}\sqrt{n}$. Therefore, if the constant $C_\Gamma$ (see the definition of the $\alpha$'s in subsection \ref{ss alpha i beta}) is big enough, $\supp\Lambda_m^{\mu}(x,\cdot\,;\,y)\subset B_D$.

For $y,z\in\Gamma$ such that $y\in\supp\varphi^{\,2^{-j}}_{2^{-j-1}}(x-\cdot)$, $j<m$ and $|\widetilde x-\widetilde z|\leq2^{-m}\sqrt{n}$ (so, in particular, $|x-z|\lesssim|x-y|$), we have the following estimates:
\begin{gather*}
|K(x-y)-K(z-y)|\lesssim|x-z||x-y|^{-n-1}\lesssim2^{j(n+1)-m},\\
|\nabla_{z}(K(x-y)-K(z-y))|=|\nabla_{z}K(z-y)|\lesssim2^{j(n+1)}.
\end{gather*}
\begin{claim}\label{F1 eq3}
We have $|\Lambda_m^{\mu}(x,z\,;\,y)|\lesssim2^{mn}$ and $|\nabla_z\Lambda_m^{\mu}(x,z\,;\,y)|\lesssim2^{m(n+1)}$ for all $x,y,z\in\R^d$.
\end{claim}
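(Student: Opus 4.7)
The strategy is direct estimation from the definition
$$\Lambda^{\mu}_m(x,z\,;\,y)=2^{nm}\int_{\{a\,:\,x,z\in D^{\,a}_{m},\,y\notin D^{\,a}_{m}\}}\frac{da}{\mu(D^{\,a}_{m})}.$$
For the sup bound, I would first invoke the standing hypotheses of Lemma~\ref{lema Tmu} (namely $f\approx1$ on $\Gamma$, $f\equiv1$ off $\wit P$, and $A$ Lipschitz) to get $\mu(D^{\,a}_m)\approx 2^{-mn}$ uniformly in $a\in\R^n$. Second, I would observe that the integration region in $a$ is contained in the set $\{a:x\in D^{\,a}_m\}$, which is a translate of the cube $-\wit D^{\,0}_m\subset\R^n$ and hence has $\LL^n$-measure $2^{-mn}$. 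Multiplying the three factors yields $|\Lambda^\mu_m(x,z;y)|\lesssim 2^{nm}\cdot 2^{-mn}\cdot 2^{mn}=2^{mn}$.

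For the gradient estimate, I would interpret $\nabla_z$ in the Lipschitz (equivalently, a.e.) sense, and note that since $D^{\,a}_m$ is a v-cube, $\Lambda^\mu_m$ depends on $z$ only through $\wit z\in\R^n$, so that the gradient in the last $d-n$ coordinates vanishes. The plan is then to write, for $z,z'\in\R^d$ with $|\wit z-\wit z'|\leq 2^{-m}$,
$$\Lambda^\mu_m(x,z;y)-\Lambda^\mu_m(x,z';y)=2^{nm}\int_{A_{\wit z}\triangle A_{\wit z'}}\frac{\chi_{\{x\in D^{\,a}_m\}}(a)\,\chi_{\{y\notin D^{\,a}_m\}}(a)}{\mu(D^{\,a}_m)}\,da,$$
where $A_{\wit z}:=\{a\in\R^n:\wit z-a\in[0,2^{-m})^n\}$ is an axis-aligned cube of side $2^{-m}$ translated by $\wit z$. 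An elementary computation shows that $\LL^n(A_{\wit z}\triangle A_{\wit z'})\lesssim 2^{-m(n-1)}|\wit z-\wit z'|$ (the symmetric difference of two nearby translates of such a cube is covered by at most $2n$ slabs of thickness $\lesssim|\wit z-\wit z'|$ and cross-sectional area $\lesssim 2^{-m(n-1)}$). Combined with $1/\mu(D^{\,a}_m)\lesssim 2^{mn}$, this yields
$$|\Lambda^\mu_m(x,z;y)-\Lambda^\mu_m(x,z';y)|\lesssim 2^{nm}\cdot 2^{-m(n-1)}|\wit z-\wit z'|\cdot 2^{mn}=2^{m(n+1)}|\wit z-\wit z'|.$$
For $|\wit z-\wit z'|>2^{-m}$ the same inequality follows trivially from the pointwise bound already obtained. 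Hence $\Lambda^\mu_m(x,\cdot\,;y)$ is Lipschitz in $\wit z$ with constant $\lesssim 2^{m(n+1)}$, and the desired pointwise gradient bound holds almost everywhere (or in the weak sense).

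No step here should be a genuine obstacle: the whole argument reduces to the elementary geometry of translated axis-aligned cubes together with the AD-regular behaviour $\mu(D^{\,a}_m)\approx 2^{-mn}$. The only point requiring a small amount of care is that $\Lambda^\mu_m$ is not classically differentiable in $z$ (the integrand contains characteristic functions that jump along $\partial D^{\,a}_m$), so the statement ``$|\nabla_z\Lambda^\mu_m|\lesssim 2^{m(n+1)}$'' must be read as a Lipschitz bound in $\wit z$, which is how it will be applied in the sequel when estimating increments such as $K(x-y)-K(z-y)$ against $\Lambda^\mu_m$.
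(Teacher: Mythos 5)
Your proof is correct and follows essentially the same route as the paper's: for the sup bound one multiplies the prefactor $2^{nm}$, the measure $\approx 2^{-mn}$ of the admissible set of $a$'s, and the uniform lower bound $\mu(D^{\,a}_m)\gtrsim 2^{-mn}$; for the gradient bound one computes the increment in $\wit z$ and controls it by the Lebesgue measure of the symmetric difference of two nearby translates of a side-$2^{-m}$ cube, $\lesssim 2^{-m(n-1)}|\wit z-\wit z'|$. (The paper performs the same calculation after the cosmetic change of variable from the corner $a$ to the center $b$, writing the admissible region as $B_\infty^m(\wit x)\cap B_\infty^m(\wit z)\cap B_\infty^m(\wit y)^c$.) Your remark that the bound should be read as a Lipschitz estimate in $\wit z$ is a fair clarification and matches how the paper actually argues, via the difference quotient.
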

Claim \ref{F1 eq3} and the subsequent ones \ref{F2 eq2},\ldots,\ref{G5 eq4} will be proved in subsection \ref{subseccio claims} below.

Putting all these estimates together we obtain that
\begin{equation*}
\Big|\nabla_{z}\Big(\Lambda_m^{\mu}(x,z\,;\,y)
(K(x-y)-K(z-y))\Big)\Big|\lesssim2^{j(n+1)+mn},
\end{equation*}
and, since $\supp\Lambda_m^{\mu}(x,\cdot\,;\,y)\subset B_D$, recalling the definition of $\dist_{B_D}$ in (\ref{wasserstein dist}),
\begin{equation*}
\left|\int\Lambda_m^{\mu}(x,z\,;\,y)(K(x-y)-K(z-y))\,d(\mu-\sigma_D)(z)\right|
\lesssim2^{j(n+1)+mn}\dist_{B_{D}}(\mu,\sigma_{D}).
\end{equation*}
We can use this last estimate in (\ref{F eq5}) to obtain
\begin{equation*}
\begin{split}
|F1_j^m(x)|&\lesssim2^{j(n+1)+mn}\dist_{B_{D}}(\mu,\sigma_{D})\int\varphi^{\,2^{-j}}_{2^{-j-1}}(x-y)\,d\mu(y)\\
&\lesssim2^{j+mn}\dist_{B_{D}}(\mu,\sigma_{D})
\approx2^{j-m}\,\ell(D)^{-n-1}\dist_{B_{D}}(\mu,\sigma_{D})\lesssim2^{j-m}\alpha(D),
\end{split}
\end{equation*}
which, together with the estimate of $|F3_j^m(x)|$ in subsection \ref{F ss3}, gives (\ref{F eq10}).

\subsubsection{{\bf Estimate of $F2_{j}^{m}(x)$}}\label{F ss2}
Arguing as in subsection \ref{F ss1}, we can obtain the following estimates for $x,y,z$ as above:
\begin{gather}
|\varphi^{\,2^{-j}}_{2^{-j-1}}(x-y)|\leq1\quad\text{and}\quad
|\nabla_y\varphi^{\,2^{-j}}_{2^{-j-1}}(x-y)|\lesssim2^j,\label{estimacio varphi}\\
|K(x-y)-K(z-y)|\lesssim|x-z||x-y|^{-n-1}\lesssim2^{j(n+1)-m},\\
|\nabla_{y}(K(x-y)-K(z-y))|\lesssim|x-z||x-y|^{-n-2}\lesssim2^{j(n+2)-m}.
\end{gather}

\begin{claim}\label{F2 eq2}
For $j<m$,
$y\in\supp\varphi^{\,2^{-j}}_{2^{-j-1}}(x-\cdot)$, and
$|\widetilde x-\widetilde z|\leq2^{-m}\sqrt{n}$, the following hold:
$|\Lambda_m^{\mu}(x,z\,;\,y)|\lesssim2^{mn}$ and
$\nabla_y\Lambda_m^{\mu}(x,z\,;\,y)=0.$
\end{claim}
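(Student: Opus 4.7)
The plan is to prove the two bounds by exploiting, respectively, the Ahlfors-regularity of $\mu$ and a direct geometric observation about the supports of the truncating functions relative to the side length of the v-cubes $D_m^{\,a}$.

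For the size bound $|\Lambda^\mu_m(x,z\,;\,y)|\lesssim 2^{mn}$, I would proceed by a brute volume estimate on the integral defining $\Lambda^\mu_m$. Since $\mu=f\HH^n_\Gamma$ with $f\approx 1$ and $\Gamma$ is a Lipschitz graph, one has the lower bound $\mu(D_m^{\,a})\gtrsim 2^{-mn}$ for every $a\in\R^n$, so that $\mu(D_m^{\,a})^{-1}\lesssim 2^{mn}$. Moreover, the set $\{a\in\R^n:x\in D_m^{\,a}\}$ is a translate of $[-2^{-m},0)^n$ and hence has Lebesgue measure exactly $2^{-mn}$. Intersecting with the further conditions on $z$ and $y$ only shrinks this set, so
\[
|\Lambda_m^\mu(x,z\,;\,y)|\leq 2^{mn}\cdot 2^{-mn}\cdot \sup_a \frac{1}{\mu(D_m^{\,a})}\lesssim 2^{mn},
\]
as required.

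The nontrivial part is showing $\nabla_y\Lambda_m^\mu(x,z\,;\,y)=0$, and my strategy is to show that the condition ``$y\notin D_m^{\,a}$'' in the definition of $\Lambda_m^\mu$ is automatic under the hypotheses, so that the $y$-dependence disappears entirely. The key geometric observation is this: the diameter of the cube $\wit D_m^{\,a}\subset\R^n$ is exactly $\sqrt{n}\,2^{-m}$, while the hypothesis $y\in\supp\varphi^{\,2^{-j}}_{2^{-j-1}}(x-\cdot)$ forces $|\widetilde x-\widetilde y|\geq 2.1\sqrt{n}\,2^{-j-1}$, and since $j\leq m-1$ this is $\geq 2.1\sqrt{n}\,2^{-m}$, which strictly exceeds $\sqrt{n}\,2^{-m}$. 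Thus whenever $x\in D_m^{\,a}$ (i.e.\ $\widetilde x\in \wit D_m^{\,a}$), the inequality $|\widetilde x-\widetilde y|>\mathrm{diam}(\wit D_m^{\,a})$ prevents $\widetilde y$ from lying in $\wit D_m^{\,a}$, hence $y\notin D_m^{\,a}$ for free.

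Consequently, the integrand in $\Lambda_m^\mu(x,z\,;\,y)$ reduces to $\chi_{\{a:x,z\in D_m^{\,a}\}}/\mu(D_m^{\,a})$, which has no $y$-dependence on the set where $y\in\supp\varphi^{\,2^{-j}}_{2^{-j-1}}(x-\cdot)$. Since the desired conclusion $\nabla_y\Lambda_m^\mu(x,z\,;\,y)=0$ is a local statement, and every $y'$ in a small neighbourhood of $y$ still satisfies $|\widetilde x-\widetilde y'|>\sqrt{n}\,2^{-m}$ (the support condition is open in this direction once one is in the interior of the annulus, and the boundary has measure zero), the gradient in $y$ vanishes. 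The only step that requires real care is this last verification that $y$ can be varied without leaving the region where the automatic exclusion holds; but this is immediate from the strict inequality $2.1\sqrt{n}>\sqrt{n}$ in the support definition of $\varphi$.
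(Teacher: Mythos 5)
Your proof is correct and takes essentially the same route as the paper. For the size bound you use $\mu(D_m^{\,a})\gtrsim 2^{-mn}$ (AD-regularity of $\mu$ on the Lipschitz graph) together with $\LL^n(\{a:x\in D_m^{\,a}\})=2^{-mn}$, which is exactly how the paper establishes this in Claim \ref{F1 eq3}; for the gradient, you observe that $|\widetilde x-\widetilde y|\geq 2.1\sqrt{n}\,2^{-j-1}\geq 2.1\sqrt{n}\,2^{-m}>\sqrt{n}\,2^{-m}=\diam(\wit D_m^{\,a})$ makes the constraint ``$y\notin D_m^{\,a}$'' automatic whenever $x\in D_m^{\,a}$, so $\Lambda^\mu_m$ is locally independent of $y$ — precisely the observation the paper makes (phrased there via the $B^m_\infty$-box reformulation of $\Lambda$), with the same appeal to the slack between $2.1\sqrt{n}$ and $\sqrt{n}$ to justify that the independence persists in a neighbourhood of $y$.
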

Notice that the first estimate in Claim \ref{F2 eq2} is the same as
the  first one in Claim \ref{F1 eq3}.

Let $D_{j}\in \DD_{j}$ be the unique dyadic v-cube with
$\ell(D_j)=2^{-j}$  which contains $D$. Then,
$\supp\varphi^{\,2^{-j}}_{2^{-j-1}}(x-\cdot)\subset
B_{D_j}$ for $C_\Gamma$ big enough. Therefore, we can use the
previous estimates to see that the gradient of the term inside the integral with respect to $y$ in (\ref{F eq6}) is bounded by $2^{j(n+2)+m(n-1)}$ and is supported in $B_{D_j}$, and then by (\ref{wasserstein dist}) we derive that
\begin{equation}\label{F2 eq3}
\begin{split}
|F2_j^m(x)|&\leq\int\left|\int\varphi^{\,2^{-j}}_{2^{-j-1}}(x-y)\Lambda_m^{\mu}(x,z\,;\,y)\right.\\ &\qquad\qquad\qquad\qquad\qquad(K(x-y)-K(z-y))\,d(\mu-\sigma_D)(y)\bigg|\,d\sigma_D(z)\\
&\lesssim\int_{|\widetilde x-\widetilde z|\leq2^{-m}\sqrt{n}}
2^{j(n+2)+m(n-1)}\dist_{B_{D_j}}(\mu,\sigma_{D})\,d\sigma_D(z)\\
&\lesssim2^{j(n+2)-m}\dist_{B_{D_j}}(\mu,\sigma_{D}).
\end{split}
\end{equation}

We shall estimate $\dist_{B_{D_j}}(\mu,\sigma_{D})$ in terms of the
$\alpha$ coeficients. Consider the unique sequence of dyadic v-cubes
$D=:D_{m}\subset\ldots\subset D_{i+1}\subset
D_{i}\subset\ldots\subset D_{j}$ such that each $D_{i}$ belongs to
$\DD_{i}$, for $i=j,\ldots,m$. Let $L_{D_i}$ be an $n$-plane that
gives the minimum in the definition of $\alpha(D_{i})$ and let
$\sigma_{D_i}:=c_{D_i}d\HH^{n}_{L_{D_i}}$ be a minimizing measure.
We will prove that
\begin{equation}\label{F2 eq4}
\dist_{B_{D_{j}}}(\mu,\sigma_{D})\lesssim2^{-j(n+1)}\sum_{i=j}^{m-1}\alpha(D_{i}).
\end{equation}
Combining (\ref{F2 eq4}) with (\ref{F2 eq3}), we will finally obtain that
$|F2_{j}^{m}(x)|\lesssim2^{j-m}\sum_{i=j}^{m-1}\alpha(D_{i}),$
which gives (\ref{F eq11}).

Let us prove (\ref{F2 eq4}). By the triangle inequality,
\begin{eqnarray*}
\dist_{B_{D_{j}}}(\mu,\sigma_{D})\leq&\dist_{B_{D_{j}}}(\mu,\sigma_{D_j})+\sum_{i=j}^{m-1}\dist_{B_{D_{j}}}(\sigma_{D_i},\sigma_{D_{i+1}})\\
\lesssim&2^{-j(n+1)}\alpha(D_{j})+\sum_{i=j}^{m-1}\dist_{B_{D_{j}}}(\sigma_{D_i},\sigma_{D_{i+1}}),
\end{eqnarray*}
so we are reduced to prove that, for all $i=j,\ldots,m-1$,
\begin{equation}\label{F2 eq6}
\dist_{B_{D_{j}}}(\sigma_{D_i},\sigma_{D_{i+1}})\lesssim2^{-j(n+1)}\alpha(D_{i}).
\end{equation}

By definition, $\dist_{B_{D_{j}}}(\sigma_{D_i},\sigma_{D_{i+1}})=\sup\big|\int g\,d(c_{D_i}\HH^{n}_{L_{D_i}}-c_{D_{i+1}}\HH^{n}_{L_{D_{i+1}}})\big|,$ where the supremum is taken over all Lipschitz functions $g$ supported in $B_{D_{j}}$ such that $\Lip(g)\leq1$. Fix one of such Lipschitz functions $g$. Then,
\begin{equation}\label{F2 eq7}
\begin{split}
\int g\,d(c_{D_i}\HH^{n}_{L_{D_i}}-c_{D_{i+1}}\HH^{n}_{L_{D_{i+1}}})
&=(c_{D_i}-c_{D_{i+1}})\int g\,d\HH^{n}_{L_{D_i}}\\
&\quad+c_{D_{i+1}}\int g\,d(\HH^{n}_{L_{D_i}}-\HH^{n}_{L_{D_{i+1}}}).
\end{split}
\end{equation}
It is shown in \cite[Lemma 3.4]{To} that
$|c_{D_i}-c_{D_{i+1}}|\lesssim\alpha(D_{i})$,  so the first term on
the right hand side of (\ref{F2 eq7}) is bounded in absolute value
by $C2^{-j(n+1)}\alpha(D_{i})$.

In order to estimate the second term of the right hand side of
(\ref{F2 eq7}), set $L_{D_{i+1}}=\{(\widetilde t, a(\widetilde
t))\in\R^d\,:\,\widetilde t\in\R^n\}$ (where $a:\R^n\to\R^{d-n}$ is
an appropriate affine map), and let $p:L_{D_i}\to L_{D_{i+1}}$ be
the projection defined by $p(t):=(\widetilde t,a(\widetilde t))$.
Since $\Gamma$ is a Lipschitz graph, $a$ is well defined and $p$ is
a homeomorphism. Let $p_{\sharp}\HH^{n}_{L_{D_i}}$ be the image
measure of $\HH^{n}_{L_{D_i}}$ by $p$. It is easy to check that
$\HH^{n}_{L_{D_{i+1}}}=\tau p_{\sharp}\HH^{n}_{L_{D_i}}$, where
$\tau$ is some positive constant such that
$|\tau-1|\lesssim\alpha(D_{i})$ and $\tau\lesssim1$. Therefore,
\begin{equation}\label{F2 eq8}
\begin{split}
\bigg|\int g\,d(\HH^{n}_{L_{D_i}}-&\HH^{n}_{L_{D_{i+1}}})\bigg|=\left|\int (g(t)-\tau g(p(t))\,)\,d\HH^{n}_{L_{D_i}}(t)\right|\\
&\leq\left|\int (1-\tau)g(t)\,d\HH^{n}_{L_{D_i}}(t)\right|+\left|\int\tau(g(t)-g(p(t))\,)\,d\HH^{n}_{L_{D_i}}(t)\right|\\
&\lesssim2^{-j(n+1)}\alpha(D_{i})+\int|(g(t)-g(p(t))|\,d\HH^{n}_{L_{D_i}}(t).\\
\end{split}
\end{equation}

Since $g$ and $g\circ p$ are supported in $B_{D_j}$ and $g$ is
1-Lipschitz, by \cite[Lemma 3.4]{To},
\begin{equation*}
\begin{split}
\int|(g-g\circ p)|\,d\HH^{n}_{L_{D_i}}
&\lesssim\int_{B_{D_{j}}}\dist_{\HH}(L_{D_i}\cap B_{D_{j}},L_{D_{i+1}}\cap B_{D_{j}})\,d\HH^{n}_{L_{D_i}}\\
&\lesssim\,\,2^{-jn}\dist_{\HH}(L_{D_i}\cap B_{D_{j}},L_{D_{i+1}}\cap B_{D_{j}})\\
&\lesssim\,\,2^{-jn}2^{i-j}\dist_{\HH}(L_{D_i}\cap B_{D_{i}},L_{D_{i+1}}\cap B_{D_{i}})\lesssim\,\,2^{-j(n+1)}\alpha(D_{i}).
\end{split}
\end{equation*}
This last estimate together with (\ref{F2 eq8}) and the fact  that
$|c_{D_{i+1}}|\lesssim1$ implies that the second term on the right
hand side of (\ref{F2 eq7}) is also bounded in absolute value by
$C2^{-j(n+1)}\alpha(D_{i})$. Therefore, to obtain (\ref{F2 eq6}) we
only have to take the supremum in (\ref{F2 eq7}) over all admissible
functions $g$.

\subsubsection{{\bf Estimate of $F3_{j}^{m}(x)$}}\label{F ss3}
Notice that, by Fubini's theorem,
\begin{equation*}
\begin{split}
\Lambda_m^{\mu}(x,z\,;\,y)&-\Lambda_m^{\sigma_D}(x,z\,;\,y)=2^{mn}\int_{\left\{a\,:\,x,z\in D^{\,a}_{m},\,y\notin D^{\,a}_{m}\right\}}\left(\frac{1}{\mu(D^{\,a}_{m})}-\frac{1}{\sigma_{D}(D^{\,a}_{m})}\right)\,da\\
&=2^{mn}\int_{\left\{a\,:\,x,z\in D^{\,a}_{m},\,y\notin D^{\,a}_{m}\right\}}\frac{\sigma_{D}(D^{\,a}_{m})-\mu(D^{\,a}_{m})}{\mu(D^{\,a}_{m})\sigma_{D}(D^{\,a}_{m})}\,da\\
&=2^{mn}\int_{\left\{a\,:\,x,z\in D^{\,a}_{m},\,y\notin D^{\,a}_{m}\right\}}
\bigg(\int_{t\in D^{\,a}_{m}}\,d(\sigma_{D}-\mu)(t)\bigg)\mu(D^{\,a}_{m})^{-1}\sigma_{D}(D^{\,a}_{m})^{-1}\,da\\
&=\int\Lambda^{\mu,\sigma_D}_m(x,z,t\,;\,y)\,d(\sigma_{D}-\mu)(t).
\end{split}
\end{equation*}
Since $\Lambda^{\mu,\sigma_D}_m(x,z,t\,;\,y)=0$ if $|\widetilde x-\widetilde t|>2^{-m}\sqrt{n}$, we may assume that
$\supp\Lambda^{\mu,\sigma_D}_m(x,z,\cdot\,;\,y)\subset B_D$ (by taking $C_\Gamma$ big enough).
\begin{claim}\label{F3 eq2}
We have $|\Lambda_m^{\mu,\sigma_D}(x,z,t\,;\,y)|\lesssim2^{2mn}$ and $|\nabla_t\Lambda_m^{\mu,\sigma_D}(x,z,t\,;\,y)|\lesssim2^{m(2n+1)}$ for all $x,y,z,t\in\R^d$.
\end{claim}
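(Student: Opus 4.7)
The plan is to read both bounds directly off the explicit expression
\[
\Lambda_m^{\mu,\sigma_D}(x,z,t;y) = 2^{mn}\int_{A(x,z,t,y)}\frac{da}{\mu(D_m^a)\,\sigma_D(D_m^a)},
\]
where $A(x,z,t,y):=\{a:x,z,t\in D_m^a,\,y\notin D_m^a\}$. I may assume $A(x,z,t,y)\neq\emptyset$, since otherwise both sides vanish. My first task is to show that $\mu(D_m^a)\approx 2^{-mn}$ and $\sigma_D(D_m^a)\approx 2^{-mn}$ uniformly in $a\in A(x,z,t,y)$. For $\mu=\HH^n_\Gamma$, the area formula immediately yields $\mu(D_m^a)\in[\,2^{-mn},(1+\Lip(A))\,2^{-mn}\,]$. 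For $\sigma_D=c_D\HH^n_{L_D}$, one uses that $c_D\approx 1$ by \cite[Lemma 3.4]{To}, while the slope of the minimizing plane $L_D$ is controlled by $\Lip(A)$ (comparing with the horizontal plane, whose $L^1$ approximation of $\Gamma$ already achieves $\alpha\lesssim\Lip(A)$), and the area formula then gives $\HH^n_{L_D}(D_m^a)\approx 2^{-mn}$.

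Granted these two-sided bounds, the $L^\infty$ estimate falls out immediately: the integrand is pointwise $\lesssim 2^{2mn}$, the region of integration is contained in $\{a:x\in D_m^a\}$, which has $n$-dimensional Lebesgue measure exactly $2^{-mn}$, and multiplying by the $2^{mn}$ prefactor produces the claimed bound $|\Lambda_m^{\mu,\sigma_D}(x,z,t;y)|\lesssim 2^{2mn}$.

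For the gradient in $t$, the key observation is that $t$ enters $\Lambda_m^{\mu,\sigma_D}$ only through the indicator $\chi_{\{a:\,t\in D_m^a\}}$, and the condition $t\in D_m^a$ is equivalent to $\widetilde a\in\widetilde t-[0,2^{-m})^n$. Hence, as $t$ moves to a nearby $t'$, the symmetric difference of admissible $a$'s consists of two slabs along the $(n-1)$-dimensional faces of the cube, whose Lebesgue measure is bounded by the boundary $(n-1)$-area times the displacement, i.e.\ $\lesssim 2^{-m(n-1)}|t-t'|$. Combining this with the pointwise bound $\lesssim 2^{2mn}$ on the integrand and the $2^{mn}$ prefactor yields
\[
|\Lambda_m^{\mu,\sigma_D}(x,z,t;y)-\Lambda_m^{\mu,\sigma_D}(x,z,t';y)|\lesssim 2^{mn}\cdot 2^{-m(n-1)}|t-t'|\cdot 2^{2mn}=2^{m(2n+1)}|t-t'|,
\]
which is the desired Lipschitz (equivalently, almost everywhere gradient) bound.

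The only step with any real subtlety is the uniform lower bound on $\sigma_D(D_m^a)$, since it requires controlling the tilt of $L_D$. However, in the Lipschitz graph setting this is routine, and the argument is essentially the one already needed (implicitly) for the analogous pointwise bound on $\Lambda_m^\mu$ in Claim \ref{F1 eq3}. Everything else is a direct consequence of the AD regularity of $\mu$ and $\sigma_D$ together with the elementary geometric fact that translating a cube by $|h|$ moves its indicator in $L^1$ by $\lesssim (\text{side})^{n-1}|h|$.
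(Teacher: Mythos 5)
Your proof is correct and follows essentially the same route as the paper, which proves this claim by pointing to the argument for Claim \ref{F1 eq3} (pointwise lower bound on the denominator giving the sup bound; symmetric-difference estimate of translated indicators giving the gradient bound) together with the observation that $\mu(D^{\,b}_{m})\sigma_D(D^{\,b}_{m})\gtrsim2^{-2mn}$. One small remark: you only need the lower bounds $\mu(D_m^a),\sigma_D(D_m^a)\gtrsim 2^{-mn}$, not the full two-sided comparisons $\approx$, so the upper-bound half of your first step is superfluous.
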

Using Claim \ref{F3 eq2}, we deduce that
$|\Lambda_m^{\mu}(x,z\,;\,y)-\Lambda_m^{\sigma_D}(x,z\,;\,y)|\lesssim
2^{m(2n+1)}\,\dist_{B_{D}}(\mu,\sigma_{D}),$
and then,
\begin{equation*}
\begin{split}
|F3_{j}^{m}(x)|&\lesssim\int\varphi^{\,2^{-j}}_{2^{-j-1}}(x-y)\int_{|\widetilde x-\widetilde z|\leq2^{-m}\sqrt{n}}
2^{m(2n+1)}\,\dist_{B_{D}}(\mu,\sigma_{D})\\
&\quad\qquad\qquad\qquad\qquad\qquad\qquad\qquad|x-z||x-y|^{-n-1}\,d\sigma_D(z)\,d\sigma_D(y)\\
&\lesssim2^{2mn+j(n+1)}\,\dist_{B_{D}}(\mu,\sigma_{D})\iint_{\begin{subarray}{l}|\widetilde x-\widetilde y|\leq2^{-j}3\sqrt{n}\\|\widetilde x-\widetilde z|\leq2^{-m}\sqrt{n}\end{subarray}}d\sigma_D(z)\,d\sigma_D(y)\\
&\lesssim2^{mn+j}\,\dist_{B_{D}}(\mu,\sigma_{D})\lesssim2^{j-m}\alpha(D),
\end{split}
\end{equation*}
which, together with the estimate of $|F1_j^m(x)|$ (see subsection \ref{F ss1}), gives (\ref{F eq10}).

\subsubsection{{\bf Estimate of $F4_{j}^{m}(x)$}}
Set $L_{D}=\{(\widetilde y, a(\widetilde y))\in\R^d\,:\,\widetilde
y\in\R^n\}$,  where $a:\R^n\to\R^{d-n}$ is an appropriate affine
map, and let $p:L_{D}^x\to L_{D}$ be the projection defined by
$p(y):=(\widetilde y,a(\widetilde y))$. Since $\Gamma$ is a
Lipschitz graph, $a$ is well defined and $p$ is a homeomorphism. If
$p_\sharp\sigma^x_{D}$ is the image measure of $\sigma^x_{D}$ under
$p$, we obviously have $\sigma_D=p_\sharp\sigma^x_{D}$ because $L_D$
and $L_D^x$ differ by a translation. Therefore, since
$\wit{p(y)}=\wit y$, (\ref{F eq8}) becomes
\begin{multline*}
F4_j^m(x)=\iint\big(K(x-p(y))-K(p(z)-p(y))-(K(x-y)-K(z-y))\big)\\
\varphi^{\,2^{-j}}_{2^{-j-1}}(x-y)\Lambda_m^{\sigma_D^x}(x,z\,;\,y)\,d\sigma_D^x(z)\,d\sigma_D^x(y).
\end{multline*}
For $y,z\in L_D^x$ such that $\varphi^{\,2^{-j}}_{2^{-j-1}}(x-y)\Lambda_m^{\sigma_D^x}(x,z\,;\,y)\neq0$, we have $K(p(z)-p(y))-K(z-y)=0$, so we can estimate
\begin{equation*}
\begin{split}
|K(x-p(y))-K(p(z)-p(y))&-(K(x-y)-K(z-y))|=|K(x-p(y))-K(x-y)|\\
&\lesssim\frac{|y-p(y)|}{|x-y|^{n+1}}
\lesssim2^{j(n+1)}|y-p(y)|\approx2^{j(n+1)}\dist(x,L_D).
\end{split}
\end{equation*}

By the same arguments as in the proof of Claim \ref{F1 eq3}, one  can
easily see that $|\Lambda_m^{\sigma_D^x}(x,z\,;\,y)|\lesssim2^{mn}$.
Therefore,
\begin{equation*}
\begin{split}
|F4_j^m(x)|&\lesssim2^{j(n+1)}\dist(x,L_D)2^{mn}
\iint_{\begin{subarray}{l}|\widetilde x-\widetilde y|\leq2^{-j}3\sqrt{n}\\|\widetilde x-\widetilde z|\leq2^{-m}\sqrt{n}\end{subarray}}d\sigma_D^x(z)\,d\sigma_D^x(y)\\
&\lesssim2^{j}\,\dist(x,L_D)\approx2^{j-m}\,\dist(x,L_D)/\ell(D),
\end{split}
\end{equation*}
which gives (\ref{F eq13}).

\subsection{Estimate of $\sum_{j\in\Z}G_{j}^{m}(x)$ when $x\in D\cap\Gamma$ for some $D\in\DD_m$}\label{G s1}
Assume that $x\in D$ for some $D\in\DD_{m}$. Recall from (\ref{R-E eq5}) that
\begin{equation*}
\begin{split}
G_{j}^{m}(x)=
\iint\varphi^{\,2^{-j}}_{2^{-j-1}}(z-y)&\gamma_{2^{-m}}(x-y)
\Lambda^\mu_{m}(x,z\,;\,y)K(z-y)\,d\mu(z)\,d\mu(y),
\end{split}
\end{equation*}
where $0\leq\gamma_{2^{-m}}(x-y)\leq1$, $|\nabla_y\gamma_{2^{-m}}(x-y)|\lesssim2^m$ for all $x,y\in\R^d$, and $\gamma_{2^{-m}}(x-y)=0$ whenever $|\widetilde x-\widetilde y|>2^{-m}3\sqrt{n}$. Notice that $\Lambda^\mu_{m}(x,z\,;\,y)=0$ if $|\widetilde x-\widetilde z|>2^{-m}\sqrt{n}$, thus we can assume that $|\widetilde x-\widetilde z|\leq 2^{-m}\sqrt{n}$ and $|\widetilde z-\widetilde y|\leq2^{-m+2}\sqrt{n}$ in the integral that defines $G_j^m(x)$. Hence, if $j\leq m-2$, $\varphi^{\,2^{-j}}_{2^{-j-1}}(z-y)\Lambda^\mu_{m}(x,z\,;\,y)=0$ for all $z,y\in\R^d$, because
$\varphi^{\,2^{-j}}_{2^{-j-1}}(z-y)=0$ if $|\widetilde z-\widetilde y|\leq2^{-j-1}2.1\sqrt{n}$, and $2^{-j-1}2.1\sqrt{n}\geq2^{-m+2}\sqrt{n}$ when $j\leq m-2$. Therefore, $G_j^m(x)=0$ for $j\leq m-2$, and then
\begin{equation}\label{G eq14}
\sum_{j\in\Z}G_{j}^{m}(x)=\sum_{j\geq m-1}G_{j}^{m}(x);
\end{equation}
so, from now on, we assume that $j\geq m-1$.

Let $L_{D}$ be an $n$-plane that minimizes $\alpha(D)$ and let
$\sigma_{D}:=c_{D}\HH^{n}_{L_{D}}$ be a minimizing measure of
$\alpha(D)$. As we did in the beginning of subsection \ref{F s1},
given $a\in\R^n$, let $b:=a+\{2^{-m-1}\}^{n}\in\R^n$ be the center
of $\wit D_m^{\,a}$. Recall that, for $t\in\R^d$, $t\in
\overline{D_m^{\,a}}$ if and only if $\|\tilde
t-b\|_\infty\leq2^{-m}$. Using that $\sigma_D$ is a Hausdorff
measure on an $n$-plane, that $K$ is antisymmetric and that
$\varphi^{\,2^{-j}}_{2^{-j-1}}$ and $\gamma_{2^{-m}}$ are
symmetric, one can show that
\begin{equation*}
\begin{split}
0=\int_{\|\wit x-b\|_\infty\leq2^{-m}}
\int_{\|\wit z-b\|_\infty\leq2^{-m}}\int_{\|\wit y-b\|_\infty>2^{-m}}
&\varphi^{\,2^{-j}}_{2^{-j-1}}(z-y)\\
&\gamma_{2^{-m}}(x-y)K(z-y)
\,d\sigma_{D}(y)\,d\sigma_{D}(z)\,db.
\end{split}
\end{equation*}
By the change of variable $b=a+\{2^{-m-1}\}^{n}$, it is easy to see
that this triple integral is equal to  $\int_{\{a\,:\,x\in
D^{\,a}_{m}\}}\int_{D^{\,a}_{m}}\int_{(D^{\,a}_{m})^c}\varphi^{\,2^{-j}}_{2^{-j-1}}(z-y)\gamma_{2^{-m}}(x-y)
K(z-y)\,d\sigma_{D}(y)\,d\sigma_{D}(z)\,da$. Thus, since
$\sigma_D(D^{\,a}_{m})$ does not depend on $a\in\R^n$ because
$\sigma_D$ is flat,
\begin{equation}\label{G eq2}
\begin{split}
0&=\int_{\{a\,:\,x\in D^{\,a}_{m}\}}\frac{2^{mn}}{\sigma_D(D^{\,a}_{m})}\int_{D^{\,a}_{m}}\int_{(D^{\,a}_{m})^c}
\varphi^{\,2^{-j}}_{2^{-j-1}}(z-y)\\
&\qquad\qquad\qquad\qquad\qquad\qquad\qquad\qquad\gamma_{2^{-m}}(x-y)K(z-y)\,d\sigma_{D}(y)\,d\sigma_{D}(z)\,da\\
&=\iint\varphi^{\,2^{-j}}_{2^{-j-1}}(z-y)\gamma_{2^{-m}}(x-y)
\Lambda_m^{\sigma_D}(x,z\,;\,y)K(z-y)\,d\sigma_{D}(z)\,d\sigma_{D}(y).
\end{split}
\end{equation}

Let $\{\eta_{Q}\}_{Q\in\DD_{j}}$ be a partition of the unity with
respect to the v-cubes $Q\in\DD_{j}$, i.e. $\eta_{Q}:\R^d\to\R$ are
$\CC^{\infty}$ functions such that:
$\chi_{0.9Q}\leq\eta_{Q}\leq\chi_{1.1Q}$,
$|\nabla\eta_{Q}|\lesssim\ell(Q)^{-1}=2^j$,
$\sum_{Q\in\DD_{j}}\eta_{Q}=1$ and $\eta_{Q}(y)=\eta_{Q}(\widetilde
y,0)$ for all $y\in\R^{d}$. It is easy to check that, if $j\geq
m-1$, $Q\in\DD_j$, and
$\supp\eta_{Q}\cap\supp\gamma_{2^{-m}}(x-\cdot)\neq\emptyset$, then
$Q\subset C_eD$ for some absolute constant $C_e>1$.

Given $Q\in\DD_j$, let $L_{Q}$ and
$\sigma_{Q}:=c_{Q}\HH^{n}_{L_{Q}}$ be  a minimizing $n$-plane and
measure for $\alpha(Q)$, respectively, and consider the measure
$$\lambda:=\sum_{Q\in\DD_{j}\,:\,Q\subset C_eD}\eta_{Q}\sigma_{Q}.$$
By (\ref{G eq2}) and the properties of the partition of the unity
$\{\eta_{Q}\}_{Q\in\DD_{j}}$, for $j\geq m-1$ we can decompose
$G_j^m(x)$ as
\begin{equation}\label{G eq3}
G_j^m(x)=G1_{j}^{m}(x)+G2_{j}^{m}(x)+G3_{j}^{m}(x)+G4_{j}^{m}(x)+G5_{j}^{m}(x),
\end{equation}
where
\vskip-10pt
\begin{eqnarray}
\qquad G1_{j}^{m}(x)&:=&\sum_{Q\in\DD_{j}:\,Q\subset C_eD}\iint\ldots\;d(\mu-\sigma_{Q})(z)\,d\mu(y),\\
G2_{j}^{m}(x)&:=&\sum_{Q\in\DD_{j}:\,Q\subset C_eD}\iint\ldots\;d\sigma_{Q}(z)\,d(\mu-\sigma_{Q})(y),\\
G3_{j}^{m}(x)&:=&\sum_{Q\in\DD_{j}:\,Q\subset C_eD}\iint\ldots\;
d(\sigma_{Q}\times\sigma_{Q}-\sigma_{D}\times\sigma_{D})(z,y),\label{G eq6}
\end{eqnarray}
where ``$\ldots$'' stands for $\varphi^{\,2^{-j}}_{2^{-j-1}}(z-y)\gamma_{2^{-m}}(x-y)\eta_Q(y)K(z-y)\Lambda^\mu_{m}(x,z\,;\,y)$, and
\begin{multline}\label{G eq7}
G4_{j}^{m}(x):=
\iint\varphi^{\,2^{-j}}_{2^{-j-1}}(z-y)\gamma_{2^{-m}}(x-y)K(z-y)\\
(\Lambda^\mu_{m}(x,z\,;\,y)-\Lambda^{\lambda}_{m}(x,z\,;\,y))\,d\sigma_{D}(z)\,d\sigma_{D}(y),
\end{multline}
\vskip-15pt
\begin{multline}\label{G eq8}
G5_{j}^{m}(x):=
\iint\varphi^{\,2^{-j}}_{2^{-j-1}}(z-y)\gamma_{2^{-m}}(x-y)K(z-y)\\
(\Lambda^\lambda_{m}(x,z\,;\,y)-\Lambda^{\sigma_D}_{m}(x,z\,;\,y))\,d\sigma_{D}(z)\,d\sigma_{D}(y).
\end{multline}

In the next subsections we will prove the following estimates:
\begin{equation}\label{G eq9}
|G1^m_j(x)|+|G2^m_j(x)|+|G4^m_j(x)|
\lesssim\sum_{Q\in\DD_{j}\,:\,Q\subset C_eD}2^{(m-j)(n+1)}\alpha(Q),
\end{equation}
\vskip-15pt
\begin{equation} \label{G eq11}
|G3^m_j(x)|+|G5^m_j(x)|\lesssim\sum_{Q\in\DD_{j}\,:\,Q\subset C_eD}
2^{(m-j)(n+1)}\bigg(\alpha(C_bD)+\sum_{R\in\DD\,:\,Q\subset R\subset C_bD}\alpha(R)\bigg),
\end{equation}
where $C_b$ is some absolute constant bigger than $C_e$.
Then, using (\ref{G eq14}) and (\ref{G eq3}), we will finally obtain that, for all $D\in\DD_m$ and $x\in D\cap\Gamma$,
\begin{equation}\label{G estimacio final}
\begin{split}
\sum_{j\in\Z}|G_j^m(x)|&\lesssim\sum_{j\geq m-1}\,\sum_{Q\in\DD_{j}\,:\,Q\subset C_eD}
2^{(m-j)(n+1)}\bigg(\alpha(C_bD)+\sum_{R\in\DD\,:\,Q\subset R\subset C_bD}\alpha(R)\bigg)\\
&\lesssim\sum_{Q\in\DD\,:\,Q\subset C_eD}
\frac{\ell(Q)^{n+1}}{\ell(D)^{n+1}}\bigg(\alpha(C_bD)+\sum_{R\in\DD\,:\,Q\subset R\subset C_bD}\alpha(R)\bigg)\\
&\lesssim\alpha(C_bD)+\sum_{R\in\DD\,:\,R\subset C_bD}\frac{\ell(R)^{n+1}}{\ell(D)^{n+1}}\,\alpha(R),
\end{split}
\end{equation}
which gives (\ref{G estimacio final2}).

\subsubsection{{\bf Estimate of $G1_{j}^{m}(x)$}}\label{G ss1}
If $\varphi^{\,2^{-j}}_{2^{-j-1}}(z-y)\neq0$ then $2^{-j-1}2.1\sqrt{n}\leq|\widetilde z-\widetilde y|\leq2^{-j}3\sqrt{n}$, so if we also have that $y\in\supp\eta_Q$, then $z\in8\sqrt{n}Q$ because $Q\in\DD_j$. Therefore, we can assume that $\supp\varphi^{\,2^{-j}}_{2^{-j-1}}(\cdot-y)\eta_Q(y)\subset B_Q$ if $C_\Gamma$ is big enough.
\begin{claim}\label{G1 eq1}
For $z\in\supp\varphi^{\,2^{-j}}_{2^{-j-1}}(\cdot-y)$, the
following hold: $|\Lambda_m^{\mu}(x,z\,;\,y)|\lesssim2^{m(n+1)-j}$,
 $|\nabla_z\Lambda_m^{\mu}(x,z\,;\,y)|\lesssim2^{m(n+1)},$ and $|\nabla_y\Lambda_m^{\mu}(x,z\,;\,y)|\lesssim2^{m(n+1)}.$
\end{claim}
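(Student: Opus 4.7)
The plan is to work directly from the definition
\begin{equation*}
\Lambda^{\mu}_m(x,z\,;\,y)=2^{nm}\int_{\{a\,:\,x,z\in D^{\,a}_{m},\,y\notin D^{\,a}_{m}\}}\frac{da}{\mu(D^{\,a}_{m})},
\end{equation*}
using that the Lipschitz graph measure is AD-regular, so $\mu(D^{\,a}_m)\approx 2^{-mn}$ uniformly in $a$, to reduce everything to estimating Lebesgue or surface measures on the parameter set in $\R^n$; in particular,
\begin{equation*}
|\Lambda_m^\mu(x,z\,;\,y)|\lesssim 2^{2mn}\,\LL^n\bigl(\{a\in\R^n:x,z\in D^{\,a}_m,\,y\notin D^{\,a}_m\}\bigr).
\end{equation*}

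For the sharpened sup bound I would exploit the hypothesis $z\in\supp\varphi^{\,2^{-j}}_{2^{-j-1}}(\cdot-y)$, which forces $|\wit z-\wit y|\lesssim 2^{-j}$. The condition $z\in D^{\,a}_m$ translates to $a^k\in(\wit z^k-2^{-m},\wit z^k]$ for every $k\leq n$, while $y\notin D^{\,a}_m$ means that for some coordinate $k$, $a^k\notin(\wit y^k-2^{-m},\wit y^k]$. Consequently, for this separating $k$ the parameter $a^k$ lies in the set difference $(\wit z^k-2^{-m},\wit z^k]\setminus(\wit y^k-2^{-m},\wit y^k]$, an interval of length at most $|\wit z^k-\wit y^k|$, whereas the remaining $n-1$ coordinates are each confined to an interval of length $2^{-m}$. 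A union bound over $k$ yields
\begin{equation*}
\LL^n\bigl(\{a:x,z\in D^{\,a}_m,\,y\notin D^{\,a}_m\}\bigr)\lesssim|\wit z-\wit y|\cdot 2^{-m(n-1)}\lesssim 2^{-j-m(n-1)},
\end{equation*}
and hence $|\Lambda_m^\mu(x,z\,;\,y)|\lesssim 2^{2mn-j-m(n-1)}=2^{m(n+1)-j}$.

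For the gradient bounds the key point is that $\mu(D^{\,a}_m)$ does not depend on $z$ or $y$, so only the characteristic function of the parameter set is differentiated. Writing $\chi_{\{z\in D^{\,a}_m\}}=\prod_{k=1}^n\chi_{(\wit z^k-2^{-m},\wit z^k]}(a^k)$ and differentiating distributionally in $z^k$ produces $\delta$-contributions supported on the hyperplanes $\{a^k=\wit z^k\}$ and $\{a^k=\wit z^k-2^{-m}\}$. On each such face the remaining $n-1$ coordinates $a^l$ are trapped in intervals of length $2^{-m}$ by $z\in D^{\,a}_m$, so the $(n-1)$-dimensional measure of the relevant face is $\lesssim 2^{-m(n-1)}$. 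Combined with $1/\mu(D^{\,a}_m)\approx 2^{mn}$ and the prefactor $2^{mn}$, this yields $|\nabla_z\Lambda_m^\mu(x,z\,;\,y)|\lesssim 2^{2mn-m(n-1)}=2^{m(n+1)}$. The estimate for $\nabla_y$ is completely symmetric: differentiating $1-\chi_{\{y\in D^{\,a}_m\}}$ in $y^k$ yields faces of the same form and the same $(n-1)$-dimensional surface bound.

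I expect the main obstacle to be making the distributional-derivative step rigorous, since $\Lambda_m^\mu$ is only Lipschitz (not $\CC^1$) in $(z,y)$, and correctly tracking the case analysis so that one really sums only over the separating coordinate $k$ while the orthogonal $n-1$ directions all contribute length $2^{-m}$. This should nevertheless follow from the same type of coordinatewise bookkeeping already used to establish Claim \ref{F1 eq3}, the new input being solely the proximity estimate $|\wit z-\wit y|\lesssim 2^{-j}$, which sharpens the $L^\infty$ bound from $2^{mn}$ to $2^{m(n+1)-j}$.
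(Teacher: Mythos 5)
Your proposal is correct and takes essentially the same approach as the paper: both reduce to estimating the Lebesgue measure of the parameter set in $\R^n$ after using $\mu(D^{\,a}_m)\approx 2^{-mn}$, with the sharpened sup bound coming precisely from the new input $|\wit z-\wit y|\lesssim 2^{-j}$. The only refinement to note is that the paper sidesteps the distributional-derivative technicality you flag by working directly with difference quotients of the characteristic function of the parameter set (estimating symmetric differences such as $B_\infty^m(\wit z_1)\Delta B_\infty^m(\wit z_2)$), which is the rigorous form of your coordinate-face bookkeeping and already appears in its proof of Claim \ref{F1 eq3}.
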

We have that $|K(z-y)|\lesssim2^{jn}$ and $|\nabla_zK(z-y)|\lesssim2^{j(n+1)}$ for all $z\in\supp\varphi^{\,2^{-j}}_{2^{-j-1}}(\cdot-y)$. Using (\ref{estimacio varphi}) and the first two estimates in Claim \ref{G1 eq1}, we get
$$|\nabla_z\big(\varphi^{\,2^{-j}}_{2^{-j-1}}(z-y)K(z-y)\Lambda^\mu_{m}(x,z\,;\,y)\big)|\lesssim2^{m(n+1)+jn}.$$
Therefore, for $y\in\supp\eta_Q$,
\begin{equation*}
\begin{split}
\left|\int\varphi^{\,2^{-j}}_{2^{-j-1}}(z-y)K(z-y)\Lambda^\mu_{m}(x,z\,;\,y)\,d(\mu-\sigma_{Q})(z)\right|
&\lesssim2^{m(n+1)+jn}\dist_{B_Q}(\mu,\sigma_Q)\\
&\lesssim2^{m(n+1)-j}\alpha(Q),
\end{split}
\end{equation*}
and then,
\begin{equation*}
\begin{split}
|G1_{j}^{m}(x)|&\lesssim\sum_{Q\in\DD_{j}\,:\,Q\subset C_eD}\int_{\supp\eta_{Q}}2^{m(n+1)-j}\alpha(Q)\,d\mu(y)
\lesssim\sum_{Q\in\DD_{j}\,:\,Q\subset C_eD}2^{(m-j)(n+1)}\alpha(Q).
\end{split}
\end{equation*}

\subsubsection{{\bf Estimate of $G2_{j}^{m}(x)$}}\label{G ss2}
It can be estimated using the arguments of subsection \ref{G ss1},
but  now we also have to take into account that
$|\nabla_{y}\gamma_{2^{-m}}(x-y)|\lesssim2^{m}\lesssim2^{j},$
because we are assuming $j\geq m-1$, and we have to use the last estimate in Claim \ref{G1 eq1}.

\subsubsection{{\bf Estimate of $G3_{j}^{m}(x)$}}\label{G ss3}
Given $x\in D\cap\Gamma$ and $Q\in\DD_j$, denote
\begin{equation*}
H_{Q}(y,z):=\varphi^{\,2^{-j}}_{2^{-j-1}}(z-y)\gamma_{2^{-m}}(x-y)\eta_Q(y)K(z-y)
\Lambda^\mu_{m}(x,z\,;\,y).
\end{equation*}
Then, (\ref{G eq6}) becomes
\begin{equation}\label{G3 eq1}
\begin{split}
G3_{j}^{m}(x)&=\sum_{Q\in\DD_{j}:\,Q\subset C_eD}
\iint H_Q(y,z)\,d(\sigma_{Q}\times\sigma_{Q}-\sigma_{D}\times\sigma_{D})(z,y)\\
&=\sum_{Q\in\DD_{j}:\,Q\subset C_eD}
\iint H_{Q}(y,z)\,d(c_{Q}^{2}\,\HH^{n}_{L_{Q}}\times\HH^{n}_{L_{Q}}-c_{D}^{2}\,\HH^{n}_{L_{D}}\times\HH^{n}_{L_{D}})(z,y)\\
&=\sum_{Q\in\DD_{j}:\,Q\subset C_eD}
\iint H_{Q}(y,z)(c_{Q}^{2}-c_{D}^{2})\,d\HH^{n}_{L_{Q}}(z)\,d\HH^{n}_{L_{Q}}(y)\\
&\quad+\sum_{Q\in\DD_{j}:\,Q\subset C_eD}
c_{D}^{2}\iint H_{Q}(y,z)\,d(\HH^{n}_{L_{Q}}\times\HH^{n}_{L_{Q}}-\HH^{n}_{L_{D}}\times\HH^{n}_{L_{D}})(z,y)\\
&=:G3A_j^m(x)+G3B_j^m(x).
\end{split}
\end{equation}

We are going to estimate the terms $G3A_j^m(x)$ and $G3B_j^m(x)$ separately.
Recall that $\ell(D)=2^{-m}$. Given a v-cube $Q\in\DD_{j}$ such that $Q\subset C_eD$, let $Q=:Q_{j}\subset\ldots\subset Q_{i+1}\subset Q_{i}\subset\ldots\subset Q_{m-1}$ be the sequence of v-cubes such that $Q_{i}$ belongs to $\DD_{i}$ for $i=m-1,\ldots,j$. Evidently, $Q_{m-1}\subset C_bD$ for some constant $C_b$ big enough, because $\ell(Q_{m-1})=2\ell(D)$ and $Q\subset Q_{m-1}\cap C_eD$. Let $L_{Q_{i}}$ be an $n$-plane that minimizes $\alpha(Q_{i})$ and let $\sigma_{Q_{i}}:=c_{Q_{i}}\HH^{n}_{L_{Q_{i}}}$ be a minimizing measure of $\alpha(Q_{i})$. Also, let $L_{C_bD}$ and $\sigma_{C_bD}:=c_{C_bD}\HH^{n}_{L_{C_bD}}$ be a minimizing $n$-plane and measure of $\alpha(C_bD)$, respectively.

In order to estimate $G3A_j^m(x)$, notice that, by \cite[Lemma
3.4]{To}  and the triangle inequality, $|c_{Q_{i}}|\lesssim1$ for
all $i=m-1,\ldots,j$, and
\begin{equation}\label{G eq15}
\begin{split}
|c_{Q}^{2}-c_{D}^{2}|&=|c_{Q}+c_{D}|\,|c_{Q}-c_{D}|\lesssim|c_{Q_j}-c_{D}|\\
&\lesssim|c_{Q_{m-1}}-c_{C_bD}|+|c_{C_bD}-c_{D}|+\sum_{i=m-1}^{j-1}|c_{Q_{i+1}}-c_{Q_{i}}|\\
&\lesssim\alpha(C_bD)+\sum_{i=m-1}^{j-1}\alpha(Q_{i})
\lesssim\alpha(C_bD)+\sum_{R\in\DD:\,Q\subset R\subset C_bD}\alpha(R)
\end{split}
\end{equation}
(in the case that $j=m-1$, there are no intermediate scales between
$j$  and $m-1$, so one just obtains
$|c_{Q}^{2}-c_{D}^{2}|\lesssim\alpha(C_bD)$).
\begin{claim}\label{G3 eq2}
For $z\in\supp\varphi^{\,2^{-j}}_{2^{-j-1}}(\cdot-y)$,  we
have $|\Lambda_m^{\mu}(x,z\,;\,y)|\lesssim2^{m(n+1)-j}.$
\end{claim}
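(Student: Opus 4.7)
The plan is to prove the bound directly from the definition of $\Lambda^\mu_m$ by estimating the Lebesgue measure of the admissible set of translation parameters. Since $\mu=f\HH^n_\Gamma$ with $f\approx 1$ and $\Gamma$ is an $n$-dimensional Lipschitz graph, one has the uniform comparability $\mu(D^{\,a}_m)\approx 2^{-mn}$. Thus the denominator in the definition of $\Lambda^\mu_m$ can be pulled out, giving
$$|\Lambda^\mu_m(x,z\,;\,y)|\lesssim 2^{2mn}\,\LL^n\bigl(\{a\in\R^n:x,z\in D^{\,a}_m,\,y\notin D^{\,a}_m\}\bigr),$$
and it remains to show that this Lebesgue measure is $\lesssim 2^{-m(n-1)-j}$.

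The key observation is to rewrite the three membership conditions in terms of $\wit a$: they become $\wit a\in\wit x-[0,2^{-m})^n$, $\wit a\in\wit z-[0,2^{-m})^n$, and $\wit a\notin\wit y-[0,2^{-m})^n$, respectively. Dropping the constraint coming from $x$, the admissible set is contained in the set-difference of two translates of the same cube of side $2^{-m}$, separated by the vector $\wit y-\wit z$. An elementary computation using
$$1-\prod_{i=1}^n(1-t_i)\leq\sum_{i=1}^n t_i\qquad(t_i\in[0,1])$$
together with the obvious trivial bound $2^{-mn}$ shows that this set-difference has $\LL^n$-measure at most $C\,2^{-m(n-1)}\min(|\wit y-\wit z|_\infty,2^{-m})$.

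Finally, since $z\in\supp\varphi^{\,2^{-j}}_{2^{-j-1}}(\cdot-y)$ forces $|\wit y-\wit z|\lesssim 2^{-j}$, and since the standing assumption $j\ge m-1$ (imposed at the beginning of subsection \ref{G s1}) gives $2^{-j}\lesssim 2^{-m}$, the minimum above is comparable to $2^{-j}$. Chaining the estimates yields
$$|\Lambda^\mu_m(x,z\,;\,y)|\lesssim 2^{2mn}\cdot 2^{-m(n-1)}\cdot 2^{-j}=2^{m(n+1)-j},$$
which is the desired bound. No step poses a genuine obstacle: the whole argument reduces to an elementary geometric volume estimate for the set-difference of two translated cubes in $\R^n$, combined with the $\mu(D^{\,a}_m)\approx 2^{-mn}$ comparability that was already used to establish the analogous bounds in Claims \ref{F1 eq3}, \ref{F2 eq2}, and \ref{G1 eq1}.
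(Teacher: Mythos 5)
Your proof is correct and follows essentially the same route as the paper, which disposes of this claim by noting it is contained in Claim \ref{G1 eq1}: there one bounds $|\Lambda_m^{\mu}(x,z\,;\,y)|\lesssim 2^{2mn}\LL^n\big(B_\infty^m(\wit z)\cap B_\infty^m(\wit y)^c\big)$ using $\mu(D^{\,a}_m)\gtrsim 2^{-mn}$, then uses the translated-cube estimate $\LL^n\big(B_\infty^m(\wit z)\cap B_\infty^m(\wit y)^c\big)\lesssim 2^{-m(n-1)}|\wit y-\wit z|$ together with $|\wit y-\wit z|\lesssim 2^{-j}$ from the support of $\varphi^{\,2^{-j}}_{2^{-j-1}}$, exactly the volume computation you spell out. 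One small wording slip: the quantity $\min(|\wit y-\wit z|_\infty,2^{-m})$ is only known to be $\lesssim 2^{-j}$, not comparable to it (nor is comparability needed); also, the invocation of $j\ge m-1$ is harmless but unnecessary, since when $j<m$ the target bound $2^{m(n+1)-j}$ exceeds the trivial bound $2^{mn}$ anyway.
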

Notice that this last estimate is the same as the first one in Claim
\ref{G1 eq1}.  Using Claim \ref{G3 eq2} and that
$|K(z-y)|\lesssim2^{jn}$ for all
$z\in\supp\varphi^{\,2^{-j}}_{2^{-j-1}}(\cdot-y)$, we easily
obtain $|H_{Q}(y,z)|\lesssim2^{m(n+1)+j(n-1)}$.
Therefore, using (\ref{G eq15}),
\begin{equation}\label{G3 eq12}
\begin{split}
|G3A_j^m(x)|&\lesssim\sum_{Q\in\DD_{j}\,:\,Q\subset C_eD}
|c_{Q}^{2}-c_{D}^{2}|\iint |H_{Q}(y,z)|\,d\HH^{n}_{L_{Q}}(z)\,d\HH^{n}_{L_{Q}}(y)\\
&\lesssim\sum_{Q\in\DD_{j}\,:\,Q\subset C_eD}
2^{(m-j)(n+1)}\bigg(\alpha(C_bD)+\sum_{R\in\DD\,:\,Q\subset R\subset C_bD}\alpha(R)\bigg).
\end{split}
\end{equation}

To estimate $G3B_j^m(x)$ in (\ref{G3 eq1}), we set
\begin{equation}\label{G3 eq10}
G3B_j^m(x)=\sum_{Q\in\DD_{j}\,:\,Q\subset C_eD}c_{D}^{2}\,G3B(Q)_j^m(x),
\end{equation}
where $G3B(Q)_j^m(x):=\iint H_{Q}\,d(\HH^{n}_{L_{Q}}\times\HH^{n}_{L_{Q}}-\HH^{n}_{L_{D}}\times\HH^{n}_{L_{D}})$. Given $Q\in\DD_j$ such that $Q\subset C_eD$, we split $G3B(Q)_j^m(x)$ as follows:
\begin{equation}\label{G3 eq6}
\begin{split}
G3B(Q)_j^m(x)&=\sum_{i=m-1}^{j-1}\iint H_{Q}\,d(\HH^{n}_{L_{Q_{i+1}}}\times\HH^{n}_{L_{Q_{i+1}}}-\HH^{n}_{L_{Q_{i}}}\times\HH^{n}_{L_{Q_{i}}})\\
&\quad+\iint H_{Q}\,d(\HH^{n}_{L_{Q_{m-1}}}\times\HH^{n}_{L_{Q_{m-1}}}-\HH^{n}_{L_{C_bD}}\times\HH^{n}_{L_{C_bD}})\\
&\quad+\iint H_{Q}\,d(\HH^{n}_{L_{C_bD}}\times\HH^{n}_{L_{C_bD}}-\HH^{n}_{L_{D}}\times\HH^{n}_{L_{D}})
\end{split}
\end{equation}
(as before, in the case $j=m-1$ the first term on the right hand side of (\ref{G3 eq6}) does not exist).

Fix $i\in\Z$ such that $m-1\leq i<j$. Set $L_{Q_{i+1}}=\{(\widetilde y, a(\widetilde y))\in\R^d\,:\,\widetilde y\in\R^n\}$, where $a:\R^n\to\R^{d-n}$ is an appropriate affine map, and let $p:L_{Q_{i}}\to L_{Q_{i+1}}$ be the map defined by $p(y):=(\widetilde y,a(\widetilde y))$. Let $p_{\sharp}\HH^{n}_{L_{Q_{i}}}$ be the image measure of $\HH^{n}_{L_{Q_{i}}}$ by $p$. It is easy to check that $\HH^{n}_{L_{Q_{i+1}}}=\tau p_{\sharp}\HH^{n}_{L_{Q_{i}}}$, where $\tau$ is some positive constant such that $|\tau-1|\lesssim\alpha(Q_{i})$ and $\tau\lesssim1$. Therefore,
\begin{equation}\label{G3 eq3}
\begin{split}
\iint H_{Q}(y,z)\,&d(\HH^{n}_{L_{Q_{i+1}}}\times\HH^{n}_{L_{Q_{i+1}}}
-\HH^{n}_{L_{Q_{i}}}\times\HH^{n}_{L_{Q_{i}}})(z,y)\\
&=\iint \Big(\tau^{2}H_{Q}(p(y),p(z))-H_{Q}(y,z)\Big)\,d\HH^{n}_{L_{Q_{i}}}(z)\,d\HH^{n}_{L_{Q_{i}}}(y)\\
&=\iint \tau^{2}\Big(H_{Q}(p(y),p(z))-H_{Q}(y,z)\Big)\,d\HH^{n}_{L_{Q_{i}}}(z)\,d\HH^{n}_{L_{Q_{i}}}(y)\\
&\quad+\iint (\tau^{2}-1)H_{Q}(y,z)\,d\HH^{n}_{L_{Q_{i}}}(z)\,d\HH^{n}_{L_{Q_{i}}}(y).
\end{split}
\end{equation}
Since $|\tau^{2}-1|\lesssim\alpha(Q_{j})$ and we have seen that
$|H_{Q}(y,z)|\lesssim2^{m(n+1)+j(n-1)}$ after Claim \ref{G3 eq2},
the second term on the right side of the last equality is bounded by $C2^{(m-j)(n+1)}\alpha(Q_{i})$.

In order to estimate the first term on the right hand side of (\ref{G3 eq3}), notice that
$\varphi^{\,2^{-j}}_{2^{-j-1}}(z-y),\gamma_{2^{-m}}(x-y),\eta_Q(y)$
and $\Lambda^\mu_{m}(x,z\,;\,y)$ only depend on the first $n$
coordinates of $y$ and $z$ (i.e., on $\widetilde y$ and $\widetilde
z$), thus their values coincide on $(y,z)$ and $(p(y),p(z))$. Then,
\begin{equation*}
\begin{split}
\iint\tau^{2}&\Big(H_{Q}(p(y),p(z))-H_{Q}(y,z)\Big)\,d\HH^{n}_{L_{Q_{i}}}(z)\,d\HH^{n}_{L_{Q_{i}}}(y)\\
&=\tau^2\iint\varphi^{\,2^{-j}}_{2^{-j-1}}(z-y)\gamma_{2^{-m}}(x-y)\eta_Q(y)\Lambda^\mu_{m}(x,z\,;\,y)\\
&\qquad\qquad\qquad\qquad\qquad\qquad\Big(K(p(z)-p(y))-K(z-y)\Big)\,d\HH^{n}_{L_{Q_{i}}}(z)\,d\HH^{n}_{L_{Q_{i}}}(y).
\end{split}
\end{equation*}
Let $\theta_i$ be the angle between $L_{Q_i}$ and $L_{Q_{i+1}}$.
One can easily see that, for $y,z\in L_{Q_{i}}$,
$|(p(z)-p(y))-(z-y)|\lesssim\sin(\theta_i)|z-y|\lesssim\alpha(Q_{i})|z-y|$.
Thus, if also $z\in\supp\varphi^{\,2^{-j}}_{2^{-j-1}}(\cdot-y)$,
\begin{equation*}
\begin{split}
|K(p(z)-p(y))-K(z-y)|&\lesssim2^{j(n+1)}|(p(z)-p(y))-(z-y)|\lesssim2^{jn}\alpha(Q_{i}).
\end{split}
\end{equation*}
Together with Claim \ref{G3 eq2} and the fact that $\tau^{2}\lesssim1$, this gives
\begin{equation*}
\begin{split}
\bigg|\iint\tau^{2}\Big(H_{Q}&(p(y),p(z))-H_{Q}(y,z)\Big)\,d\HH^{n}_{L_{Q_{i}}}(z)\,d\HH^{n}_{L_{Q_{i}}}(y)\bigg|\\
&\lesssim\iint\varphi^{\,2^{-j}}_{2^{-j-1}}(z-y)\eta_Q(y)2^{m(n+1)+j(n-1)}\alpha(Q_{i})\,d\HH^{n}_{L_{Q_{i}}}(z)\,d\HH^{n}_{L_{Q_{i}}}(y)\\
&\lesssim2^{(m-j)(n+1)}\alpha(Q_{i}).
\end{split}
\end{equation*}

From the last estimates and (\ref{G3 eq3}), we get
\begin{equation*}
\Big|\iint H_{Q}\,d(\HH^{n}_{L_{Q_{i+1}}}\times\HH^{n}_{L_{Q_{i+1}}}
-\HH^{n}_{L_{Q_{i}}}\times\HH^{n}_{L_{Q_{i}}})\Big|\lesssim2^{(m-j)(n+1)}\alpha(Q_{i})
\end{equation*}
for $i=m-1,\ldots,j-1$. With similar arguments, one also obtains
\begin{equation*}
\begin{split}
\Big|\iint H_{Q}\,d(\HH^{n}_{L_{Q_{m-1}}}\times\HH^{n}_{L_{Q_{m-1}}}-\HH^{n}_{L_{C_bD}}\times\HH^{n}_{L_{C_bD}})\Big|
&\lesssim2^{(m-j)(n+1)}\alpha(C_bD),\\
\Big|\iint H_{Q}\,d(\HH^{n}_{L_{C_bD}}\times\HH^{n}_{L_{C_bD}}-\HH^{n}_{L_{D}}\times\HH^{n}_{L_{D}})\Big|
&\lesssim2^{(m-j)(n+1)}\alpha(C_bD).
\end{split}
\end{equation*}
These last three inequalities together with (\ref{G3 eq6}), (\ref{G3 eq10}) and the fact that $|c_D|\lesssim1$ yield
\begin{equation}\label{G3 eq11}
\begin{split}
|G3B_j^m(x)|&\lesssim\sum_{Q\in\DD_{j}\,:\,Q\subset C_eD}2^{(m-j)(n+1)}\bigg(\alpha(C_bD)+\sum_{i=m-1}^{j-1}\alpha(Q_{i})\bigg)\\
&\leq\sum_{Q\in\DD_{j}\,:\,Q\subset C_eD}2^{(m-j)(n+1)}\bigg(\alpha(C_bD)+\sum_{R\in\DD\,:\,Q\subset R\subset C_bD}\alpha(R)\bigg).
\end{split}
\end{equation}
Finally, (\ref{G3 eq12}) and (\ref{G3 eq11}) applied to (\ref{G3 eq1}) give half of (\ref{G eq11}).

\subsubsection{{\bf Estimate of $G4_{j}^{m}(x)$}}\label{G ss4}
By Fubini's theorem and the definitions of $\lambda$, $\Lambda_m^{\mu}$ and $\Lambda_m^{\lambda}$,
\begin{equation*}
\begin{split}
\Lambda_m^{\mu}&(x,z\,;\,y)-\Lambda_m^{\lambda}(x,z\,;\,y)
=2^{mn}\int_{\{a:\,x,z\in D^{\,a}_{m},\,y\notin D^{\,a}_{m}\}}\frac{\lambda(D^{\,a}_{m})-\mu(D^{\,a}_{m})}{\mu(D^{\,a}_{m})\lambda(D^{\,a}_{m})}\,da\\
&=2^{mn}\int_{\{a:\,x,z\in D^{\,a}_{m},\,y\notin D^{\,a}_{m}\}}
\bigg(\int_{t\in D^{\,a}_{m}}\sum_{Q\in\DD_{j}\,:\,Q\subset C_eD}\eta_{Q}(t)\,d(\sigma_{Q}-\mu)(t)\bigg)\frac{da}{\mu(D^{\,a}_{m})\lambda(D^{\,a}_{m})}\\
&=\sum_{Q\in\DD_{j}\,:\,Q\subset C_eD}\int\eta_{Q}(t)\Lambda^{\mu,\lambda}_m(x,z,t\,;\,y)\,d(\sigma_{Q}-\mu)(t).
\end{split}
\end{equation*}
We also used in the second equality that $1=\sum_{Q\in\DD_{j}}\eta_{Q}(t)=\sum_{Q\in\DD_{j}:\,Q\subset C_eD}\eta_{Q}(t)$ for all $t\in D_m^{\,a}$ if $C_e$ is big enough, and this is because $j\geq m-1$ and $|\wit x-\wit t|\lesssim2^{-m}$ for all $t\in D_m^{\,a}$.
\begin{claim}\label{G4 eq1}
For $x\in D$, $j\geq m-1$, $|x-y|\lesssim2^{-m}$, and $z\in\supp\varphi^{\,2^{-j}}_{2^{-j-1}}(\cdot-y)$, the
following  hold:
$|\Lambda_m^{\mu,\lambda}(x,z,t\,;\,y)|\lesssim2^{m(2n+1)-j}$ and
$|\nabla_t\Lambda_m^{\mu,\lambda}(x,z,t\,;\,y)|\lesssim2^{m(2n+1)}.$
\end{claim}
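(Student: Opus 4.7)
My plan is to follow the template used for Claims \ref{F1 eq3}, \ref{F3 eq2}, and \ref{G1 eq1}, starting from the definition
\begin{equation*}
\Lambda_m^{\mu,\lambda}(x,z,t;y)=2^{mn}\int_{\{a\,:\,x,z,t\in D_m^{\,a},\,y\notin D_m^{\,a}\}}\frac{da}{\mu(D_m^{\,a})\,\lambda(D_m^{\,a})}
\end{equation*}
and estimating separately the Lebesgue measure of the integration domain in $a$ and the two denominators. The bound $\mu(D_m^{\,a})\gtrsim 2^{-mn}$ is routine: $f\approx 1$ and $\Gamma$ projects bijectively onto $\R^n$ with bounded Jacobian. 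The bound $\lambda(D_m^{\,a})\gtrsim 2^{-mn}$ appears here for the first time and is the only point at which the patchwork structure $\lambda=\sum_{Q\in\DD_j,\,Q\subset C_eD}\eta_Q\sigma_Q$ plays a substantive role; the strategy is to project each $L_Q$ bijectively down to $\R^n$ with Jacobian $\approx 1$ (since $L_Q$ makes a small angle with $\R^n$), invoke $c_Q\approx 1$ from \cite[Lemma 3.4]{To}, and use the partition of unity $\sum_Q\eta_Q\equiv 1$ near $D$ together with $\eta_Q(y)=\eta_Q(\wit y,0)$, to reduce $\lambda(D_m^{\,a})$ to $\approx\LL^n(\wit D_m^{\,a})=2^{-mn}$.

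The gain of a factor $2^{m-j}$ over the value bound in Claim \ref{F3 eq2} comes entirely from the integration domain. Setting $v:=\wit z-a$ and $w:=\wit y-\wit z$, the conditions $z\in D_m^{\,a}$ and $y\notin D_m^{\,a}$ become $v\in[0,2^{-m})^n$ and $v+w\notin[0,2^{-m})^n$ respectively. The hypothesis $z\in\supp\varphi^{\,2^{-j}}_{2^{-j-1}}(\cdot-y)$ forces $|w|\lesssim 2^{-j}$, so for each coordinate $i$ the set $\{v^i\in[0,2^{-m})\,:\,v^i+w^i\notin[0,2^{-m})\}$ has length at most $|w^i|$; a union bound over $i=1,\ldots,n$ then yields
\begin{equation*}
\LL^n\bigl(\{a\,:\,z\in D_m^{\,a},\,y\notin D_m^{\,a}\}\bigr)\lesssim\sum_{i=1}^n|w^i|\cdot 2^{-m(n-1)}\lesssim 2^{-j-m(n-1)},
\end{equation*}
and the extra constraints $x,t\in D_m^{\,a}$ can only shrink this set further. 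Combining the three estimates gives $|\Lambda_m^{\mu,\lambda}(x,z,t;y)|\lesssim 2^{mn}\cdot 2^{-j-m(n-1)}\cdot 2^{2mn}=2^{m(2n+1)-j}$, which is the first inequality.

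For the gradient bound, note that $\mu$, $\lambda$ and the constraints on $x,z,y$ are all independent of $t$; the only $t$-dependence sits inside the indicator $\chi_{[0,2^{-m})^n}(\wit t-a)$. Distributional differentiation with respect to $t^i$ (for $1\leq i\leq n$) produces two oppositely signed $(n-1)$-dim surface contributions on the faces $F_i^\pm$ of $\wit t+(-2^{-m},0]^n$ perpendicular to $e_i$, hence
\begin{equation*}
|\partial_{t^i}\Lambda_m^{\mu,\lambda}(x,z,t;y)|\leq 2^{mn}\sum_{\pm}\LL^{n-1}(F_i^{\pm})\cdot\sup_{a'\in F_i^{\pm}}\frac{1}{\mu(D_m^{\,a'})\,\lambda(D_m^{\,a'})}.
\end{equation*}
Each face has $(n-1)$-dim Lebesgue measure $2^{-m(n-1)}$ and the supremum is $\lesssim 2^{2mn}$, so $|\partial_{t^i}\Lambda_m^{\mu,\lambda}|\lesssim 2^{mn}\cdot 2^{-m(n-1)}\cdot 2^{2mn}=2^{m(2n+1)}$; the derivatives $\partial_{t^k}$ for $k>n$ vanish because the constraint $t\in D_m^{\,a}$ does not see the last $d-n$ coordinates of $t$.

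The main obstacle will be the uniform lower bound $\lambda(D_m^{\,a})\gtrsim 2^{-mn}$; once this is secured, the rest is a direct combination of Claim \ref{F1 eq3}'s mechanism (controlling $\Lambda_m$ and $\nabla_t\Lambda_m$ via volume of domain and boundary measure) with the thin-slab volume computation from Claim \ref{G1 eq1}, which is precisely what produces the extra $2^{m-j}$ factor in the value bound and, as is visible from the surface integral above, gives no improvement in the gradient bound since the latter picks up the full face area $2^{-m(n-1)}$.
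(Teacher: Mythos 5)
Your proposal is correct and takes essentially the same route as the paper: you bound the measure of the integration domain in $a$ by the thin-slab estimate $\LL^n(\{a:z\in D_m^{\,a},\,y\notin D_m^{\,a}\})\lesssim 2^{-m(n-1)-j}$ (exactly what the paper obtains via $\LL^n(B_\infty^m(\wit z)\cap B_\infty^m(\wit y)^c)\lesssim 2^{-m(n-1)}|\wit y-\wit z|$), you bound the denominator by $\mu(D_m^{\,a})\lambda(D_m^{\,a})\gtrsim 2^{-2mn}$ (with the same observation that the lower bound on $\lambda$ requires $D_m^{\,a}$ to sit where $\sum_Q\eta_Q\equiv 1$, which the constraints $x\in D$, $x\in D_m^{\,a}$, $j\geq m-1$ guarantee), and you multiply to get $2^{m(2n+1)-j}$. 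For the gradient, your face-measure/distributional-derivative computation is the infinitesimal form of the paper's argument, which phrases it as a Lipschitz estimate $|\Lambda(\cdot,t_1;\cdot)-\Lambda(\cdot,t_2;\cdot)|\lesssim 2^{3mn}\LL^n(B_\infty^m(\wit t_1)\Delta B_\infty^m(\wit t_2))\lesssim 2^{m(2n+1)}|t_1-t_2|$ and then lets $t_1\to t_2$; the two are the same estimate.
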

Using Claim \ref{G4 eq1} and the properties of $\eta_Q$, we obtain
\begin{equation*}
\begin{split}
|\Lambda_m^{\mu}(x,z\,;\,y)-\Lambda_m^{\lambda}(x,z\,;\,y)|
&\lesssim\sum_{Q\in\DD_{j}\,:\,Q\subset C_eD}2^{m(2n+1)}\dist_{B_Q}(\mu,\sigma_Q)\\
&\lesssim\sum_{Q\in\DD_{j}\,:\,Q\subset C_eD}2^{m(2n+1)-j(n+1)}\alpha(Q).
\end{split}
\end{equation*}
Plugging this estimate into the definition of $G4_{j}^{m}(x)$ in (\ref{G eq7}), we get
\begin{equation*}
\begin{split}
|G4_{j}^{m}(x)|&\lesssim\iint\varphi^{\,2^{-j}}_{2^{-j-1}}(z-y)\gamma_{2^{-m}}(x-y)|K(z-y)|\\
&\qquad\qquad\qquad\qquad\sum_{Q\in\DD_{j}\,:\,Q\subset C_eD}2^{m(2n+1)-j(n+1)}\alpha(Q)\,d\sigma_{D}(z)\,d\sigma_{D}(y)\\
&\lesssim\sum_{Q\in\DD_{j}\,:\,Q\subset C_eD}2^{(m-j)(n+1)}\alpha(Q),\\
\end{split}
\end{equation*}
which, together with the estimates of $|G1_{j}^{m}(x)|$ and
$|G2_{j}^{m}(x)|$  in subsections \ref{G ss1} and \ref{G ss2}, gives
(\ref{G eq9}).

\subsubsection{{\bf Estimate of $G5_{j}^{m}(x)$}}\label{G ss5}
Arguing as in subsection \ref{G ss4}, we have
\begin{equation}\label{G5 eq1}
\begin{split}
\Lambda_m^{\lambda}(x,z\,;\,y)-\Lambda_m^{\sigma_D}(x,z\,;\,y)
&=\sum_{Q\in\DD_{j}\,:\,Q\subset C_eD}\int\eta_{Q}(t)\Lambda^{\lambda,\sigma_D}_m(x,z,t\,;\,y)\,d(\sigma_{D}-\sigma_Q)(t)\\
&=\sum_{Q\in\DD_{j}\,:\,Q\subset C_eD}\int H_{Q}(t)\,d(\sigma_{D}-\sigma_Q)(t),
\end{split}
\end{equation}
where we have set $H_Q(t):=\eta_{Q}(t)\Lambda^{\lambda,\sigma_D}_m(x,z,t\,;\,y)$.

We are going to estimate the right hand side of (\ref{G5 eq1}) using the techniques of subsection \ref{G
ss3}. We have
\begin{equation}\label{G5 eq3}
\int H_Q\,d(\sigma_{D}-\sigma_{Q})=(c_{D}-c_{Q})\int H_Q\,d\HH^{n}_{L_{D}}+c_{Q}\int H_{Q}\,d(\HH^{n}_{L_{D}}-\HH^{n}_{L_{Q}}).
\end{equation}
We introduce the intermediate v-cubes between $Q\in\DD_{j}$ and $D\in\DD_{m}$ to obtain
\begin{equation}\label{G5 eq5}
|c_{D}-c_{Q}|\lesssim\alpha(C_bD)+\sum_{R\in\DD\,:\,Q\subset R\subset C_bD}\alpha(R).
\end{equation}
\begin{claim}\label{G5 eq4}
For $x\in D$, $j\geq m-1$, $|x-y|\lesssim2^{-m}$, and $z\in\supp\varphi^{\,2^{-j}}_{2^{-j-1}}(\cdot-y)$, the
following holds:
$|\Lambda^{\lambda,\sigma_D}_m(x,z,t\,;\,y)|\lesssim2^{m(2n+1)-j}.$
\end{claim}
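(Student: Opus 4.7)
The approach I would take mirrors the strategy used for Claim \ref{G4 eq1}: estimate directly the integral defining $\Lambda^{\lambda,\sigma_D}_m(x,z,t\,;\,y)$ by combining a lower bound on the product $\lambda(D^a_m)\sigma_D(D^a_m)$ with an upper bound on the Lebesgue measure of the set of admissible parameters
\begin{equation*}
A := \{a\in\R^n\,:\,x,z,t\in D^a_m,\;y\notin D^a_m\}.
\end{equation*}
If $|\wit x-\wit t|>2^{-m}\sqrt n$ then $A=\emptyset$ and the claim is trivial, so I may assume $|\wit x-\wit t|\leq 2^{-m}\sqrt n$.

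For the first lower bound, $\sigma_D(D^a_m)\approx 2^{-mn}$ whenever $A\ne\emptyset$, because $\sigma_D=c_D\HH^n_{L_D}$, $|c_D|\approx 1$ by \cite[Lemma 3.4]{To}, and $L_D$ is a nearly flat $n$-plane near $\Gamma$, so it intersects the vertical slab $D^a_m$ in a set of $\HH^n$-measure comparable to $2^{-mn}$. For the second lower bound, recall that $\lambda=\sum_{Q\in\DD_j,\,Q\subset C_eD}\eta_Q\sigma_Q$ with $\sigma_Q=c_Q\HH^n_{L_Q}$, and since $j\geq m-1$ every contributing $Q$ has $\ell(Q)\leq 2\ell(D)$. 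For $C_e$ chosen large enough, the partition of unity property gives $\sum_Q\eta_Q\equiv 1$ on $D^a_m$; combined with $c_Q\approx 1$ and the fact that each $L_Q$ lies close to $\Gamma$ over $C_eD$, this yields $\lambda(D^a_m)\gtrsim 2^{-mn}$.

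The geometric heart of the argument is the upper bound
\begin{equation*}
|A|\lesssim 2^{-j}\cdot 2^{-m(n-1)}.
\end{equation*}
The condition $z\in\supp\varphi^{\,2^{-j}}_{2^{-j-1}}(\cdot-y)$ forces $|\wit z-\wit y|\approx 2^{-j}$; together with $\wit z\in\wit D^a_m$ and $\wit y\notin\wit D^a_m$, this means that the boundary of $\wit D^a_m$ separates $\wit z$ from $\wit y$, so $\wit z$ lies within distance $O(2^{-j})$ of $\partial\wit D^a_m$ in at least one coordinate direction. Fixing such a coordinate confines the corresponding component of $a$ to a set of length $\lesssim 2^{-j}$, while the requirement $x\in D^a_m$ confines each of the remaining $n-1$ components to an interval of length $\leq 2^{-m}$. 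Summing over the $n$ possible coordinates yields the measure bound.

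Combining the three estimates,
\begin{equation*}
|\Lambda^{\lambda,\sigma_D}_m(x,z,t\,;\,y)|\leq 2^{nm}\cdot\frac{|A|}{\inf_a\lambda(D^a_m)\sigma_D(D^a_m)}\lesssim 2^{nm}\cdot\frac{2^{-j}\cdot 2^{-m(n-1)}}{2^{-mn}\cdot 2^{-mn}}=2^{m(2n+1)-j},
\end{equation*}
as required. The main obstacle I expect is establishing the uniform lower bound $\lambda(D^a_m)\gtrsim 2^{-mn}$, since $\lambda$ is a partition-of-unity combination of Hausdorff measures living on distinct $n$-planes $L_Q$: one has to verify that the relative tilting of these planes over $C_eD$ (controlled by $\alpha$-coefficients through tilting estimates of the type used in subsection \ref{F ss2}) is small enough that no geometric cancellation in the sum can degrade the bound below the order of $\HH^n(D^a_m\cap\Gamma)$.
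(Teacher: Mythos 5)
Your proposal is correct and follows the same route the paper takes (which simply invokes the argument of Claim \ref{G4 eq1} with $\mu$ replaced by $\sigma_D$): lower-bound both $\lambda(D^{\,a}_{m})$ and $\sigma_D(D^{\,a}_{m})$ by $c\,2^{-mn}$, upper-bound the parameter set via $\LL^n\big(B_\infty^m(\wit z)\cap B_\infty^m(\wit y)^c\big)\lesssim 2^{-m(n-1)-j}$, and multiply. One small clarification regarding your closing worry: there is no risk of ``geometric cancellation'' in $\lambda=\sum_Q\eta_Q\sigma_Q$ because every summand is a nonnegative measure; the bound $\lambda(D^{\,a}_{m})\gtrsim 2^{-mn}$ comes simply from projecting each $\HH^n_{L_Q}$ vertically onto $\R^n$ (each projection dominates $\LL^n$ since the slope of $L_Q$ is controlled by $\Lip(A)$), using that $\eta_Q$ depends only on $\wit t$, and then summing the partition of unity $\sum_Q\eta_Q\equiv1$ over $\wit D^{\,a}_m$, together with $c_Q\approx 1$. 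No control on the relative tilting between the distinct $L_Q$'s is needed at this point.
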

Combining Claim \ref{G5 eq4} with (\ref{G5 eq5}), we derive that
\begin{equation}\label{G5 eq6}
|c_{D}-c_{Q}|\int|H_Q|\,d\HH^{n}_{L_{D}}
\lesssim2^{m(2n+1)-j(n+1)}\bigg(\alpha(C_bD)+\sum_{R\in\DD\,:\,Q\subset R\subset C_bD}\alpha(R)\bigg).
\end{equation}

For the second term on the right side of (\ref{G5 eq3}),  one
can also use the arguments in subsection \ref{G ss3} (see (\ref{G3
eq6}) and following) to show that
\begin{multline}\label{G5 eq7}
\left|\int H_Q\,d(\HH^{n}_{L_{D}}-\HH^{n}_{L_{Q}})\right|
\lesssim2^{m(2n+1)-j(n+1)}\bigg(\alpha(C_bD)+\sum_{R\in\DD\,:\,Q\subset R\subset C_bD}\alpha(R)\bigg)
\end{multline}
(now it is easier because the function $H_{Q}(t)$ only depends on the first $n$ coordinates of the points involved, i.e., it depends only on $\widetilde x$, $\widetilde y$, $\widetilde z$ and $\widetilde t$, so when we project vertically to deal with the image measure, the function $H_Q$ is not affected).
Therefore, by (\ref{G5 eq6}), (\ref{G5 eq7}), (\ref{G5 eq3}), and (\ref{G5 eq1}), we obtain
\begin{multline*}
|\Lambda_m^{\lambda}(x,z\,;\,y)-\Lambda_m^{\sigma_D}(x,z\,;\,y)|
\lesssim\sum_{\begin{subarray}{c}Q\in\DD_{j}:\\Q\subset C_eD\end{subarray}}2^{m(2n+1)-j(n+1)}\bigg(\alpha(C_bD)+
\sum_{\begin{subarray}{c}R\in\DD:\\Q\subset R\subset C_bD\end{subarray}}\alpha(R)\bigg).
\end{multline*}

From the definition of $G5_{j}^{m}(x)$ in (\ref{G eq8}), we conclude that
\begin{equation*}
\begin{split}
|G5_{j}^{m}(x)|&\lesssim\sum_{Q\in\DD_{j}\,:\,Q\subset C_eD}2^{m(2n+1)-j(n+1)}
\bigg(\alpha(C_bD)+\sum_{R\in\DD\,:\,Q\subset R\subset C_bD}\alpha(R)\bigg)\\
&\qquad\qquad\qquad\iint\varphi^{\,2^{-j}}_{2^{-j-1}}(z-y)\gamma_{2^{-m}}(x-y)|K(z-y)|\,d\sigma_{D}(z)\,d\sigma_{D}(y)\\
&\lesssim\sum_{Q\in\DD_{j}\,:\,Q\subset C_eD}
2^{(m-j)(n+1)}\bigg(\alpha(C_bD)+
\sum_{R\in\DD\,:\,Q\subset R\subset C_bD}\alpha(R)\bigg),\\
\end{split}
\end{equation*}
which, together with the estimate of $|G3_j^m(x)|$ in subsection \ref{G ss3}, gives (\ref{G eq11}).

\subsection{Proof of Claims \ref{F1 eq3}$,\ldots,$ \ref{G5 eq4} }\label{subseccio claims}
We have to prove:
\begin{itemize}
\item{\bf Claim \ref{F1 eq3}:} We have $|\Lambda_m^{\mu}(x,z\,;\,y)|\lesssim2^{mn}$ and $|\nabla_z\Lambda_m^{\mu}(x,z\,;\,y)|\lesssim2^{m(n+1)}$ for all $x,y,z\in\R^d$.
\item{\bf Claim \ref{F2 eq2}:} For $j<m$, $y\in\supp\varphi^{\,2^{-j}}_{2^{-j-1}}(x-\cdot)$, and $|\widetilde x-\widetilde z|\leq2^{-m}\sqrt{n}$, the following hold:
    $|\Lambda_m^{\mu}(x,z\,;\,y)|\lesssim2^{mn}$ and $\nabla_y\Lambda_m^{\mu}(x,z\,;\,y)=0.$
\item{\bf Claim \ref{F3 eq2}:} We have $|\Lambda_m^{\mu,\sigma_D}(x,z,t\,;\,y)|\lesssim2^{2mn}$ and $|\nabla_t\Lambda_m^{\mu,\sigma_D}(x,z,t\,;\,y)|\lesssim2^{m(2n+1)}$ for all $x,y,z,t\in\R^d$.
\item{\bf Claim \ref{G1 eq1}:} For $z\in\supp\varphi^{\,2^{-j}}_{2^{-j-1}}(\cdot-y)$, the following hold: $|\Lambda_m^{\mu}(x,z\,;\,y)|\lesssim2^{m(n+1)-j}$, $|\nabla_z\Lambda_m^{\mu}(x,z\,;\,y)|\lesssim2^{m(n+1)},$ and $|\nabla_y\Lambda_m^{\mu}(x,z\,;\,y)|\lesssim2^{m(n+1)}.$
\item{\bf Claim \ref{G3 eq2}:} For $z\in\supp\varphi^{\,2^{-j}}_{2^{-j-1}}(\cdot-y)$,  $|\Lambda_m^{\mu}(x,z\,;\,y)|\lesssim2^{m(n+1)-j}.$
\item{\bf Claim \ref{G4 eq1}:} For $x\in D$, $j\geq m-1$, $|x-y|\lesssim2^{-m}$, and $z\in\supp\varphi^{\,2^{-j}}_{2^{-j-1}}(\cdot-y)$, the following hold: $|\Lambda_m^{\mu,\lambda}(x,z,t\,;\,y)|\lesssim2^{m(2n+1)-j}$ and $|\nabla_t\Lambda_m^{\mu,\lambda}(x,z,t\,;\,y)|\lesssim2^{m(2n+1)}.$
\item{\bf Claim \ref{G5 eq4}:}  For $x\in D$, $j\geq m-1$, $|x-y|\lesssim2^{-m}$, and $z\in\supp\varphi^{\,2^{-j}}_{2^{-j-1}}(\cdot-y)$, the following holds: $|\Lambda^{\lambda,\sigma_D}_m(x,z,t\,;\,y)|\lesssim2^{m(2n+1)-j}.$
\end{itemize}

To prove the claims, we need to express the function $\Lambda$ at
the end  of subsection \ref{ss particular martingale} in a more
convenient way. Notice that we can
replace $D^{\,a}_{m}$ by $\overline{D^{\,a}_{m}}$ in the definition
of $\Lambda$ because $\mu$ and the $n$-dimensional
Hausdorff measure vanish on $\partial D^{\,a}_{m}$.

For $u\in\R^n$ and $r>0$, we denote $|u|_\infty:=\max_{i=1,\ldots,n}|u^i|$, $B_\infty(u,r):=\{v\in\R^n\,:\,|u-v|_\infty\leq r\}$, and $B_\infty^m(u):=B_\infty(u,2^{-m-1})$.
Given $a\in\R^n$, let $b:=a+\{2^{-m-1}\}^{n}\in\R^n$ be the center of $\wit D_m^{\,a}$. Then, given $q\in\R^d$,
$$q\in \overline{D_m^{\,a}}\;\Longleftrightarrow\;|\tilde q-b|_\infty\leq2^{-m}\;\Longleftrightarrow\;b\in B_\infty^m(\wit q).$$

Let $\mu_1,\ldots,\mu_k$ be positive Borel measures such that
$\mu_l(D^{\,a}_{m})>0$ and $\mu_l(\partial D^{\,a}_{m})=0$  for all
$a\in\R^n$, $m\in\Z$ and $l=1,\ldots,k$. Given $m\in\Z$ and
$\,x_1,\ldots,x_i,y_1,\ldots,y_j\in\R^{d}$ we have, by the change of
variable $b=a+\{2^{-m-1}\}^{n}\in\R^n$,
\begin{equation}\label{claims formula Lambda}
\begin{split}
\Lambda^{\mu_1,\ldots,\mu_k}_m(x_1,\ldots,x_i\,;\,y_1,\ldots,&y_j)
=\int_{\left\{a\in\R^n\,:\,x_1,\ldots,x_i\in \overline{D^{\,a}_{m}},\,y_1,\ldots,y_j\notin \overline{D^{\,a}_{m}}\right\}}\frac{2^{nm}\,da}{\prod_{l=1}^k\mu_l(D^{\,a}_{m})}\\
&=2^{nm}\int\frac{\chi_{B_\infty^m(\wit x_1)\cap\ldots\cap B_\infty^m(\wit{x_i})\cap B_\infty^m(\wit{y_1})^c\cap\ldots\cap B_\infty^m(\wit{y_j})^c}(b)}{\prod_{l=1}^k\mu_l\big(D^{b-\{2^{-m-1}\}^{n}}_{m}\big)}\,db.
\end{split}
\end{equation}

\begin{proof}[Proof of Claim \ref{F1 eq3}]
By (\ref{claims formula Lambda}), we have 
\begin{equation}\label{claim 4.1 formula1}
\Lambda_m^{\mu}(x,z\,;\,y)=2^{nm}\int\mu\big(D^{b-\{2^{-m-1}\}^{n}}_{m}\big)^{-1}
\chi_{B_\infty^m(\wit{x})\cap B_\infty^m(\wit{z})\cap B_\infty^m(\wit{y})^c}(b)\,db.
\end{equation}
Since $\mu(D^{\,b}_{m})\gtrsim2^{-mn}$ for all $b\in\R^n$,
$$|\Lambda_m^{\mu}(x,z\,;\,y)|\lesssim2^{2mn}\LL^n(B_\infty^m(\wit{x})\cap B_\infty^m(\wit{z})\cap B_\infty^m(\wit{y})^c)\leq2^{2mn}\LL^n(B_\infty^m(\wit{x}))\leq2^{mn}.$$

To deal with the second inequality in Claim \ref{F1 eq3}, we will
estimate
$$|\Lambda_m^{\mu}(x,z_1\,;\,y)-\Lambda_m^{\mu}(x,z_2\,;\,y)|/|z_1-z_2|$$
for $z_1$ close enough to $z_2$. Recall that, given two sets
$F_1,F_2\subset\R^n$, $F_1\Delta F_2:=(F_1\setminus
F_2)\cup(F_2\setminus F_1)$ denotes their symmetric difference. Using (\ref{claim 4.1 formula1}), we get
\begin{equation}\label{claim 4.1 formula2}
\begin{split}
|\Lambda_m^{\mu}(x,&z_1\,;\,y)-\Lambda_m^{\mu}(x,z_2\,;\,y)|\\
&\lesssim2^{2nm}\int
|\chi_{B_\infty^m(\wit{x})\cap B_\infty^m(\wit{z_1})\cap B_\infty^m(\wit{y})^c}(b)
-\chi_{B_\infty^m(\wit{x})\cap B_\infty^m(\wit{z_2})\cap B_\infty^m(\wit{y})^c}(b)|\,db\\
&=2^{2nm}\LL^n\Big(\big(B_\infty^m(\wit{x})\cap B_\infty^m(\wit{z_1})\cap B_\infty^m(\wit{y})^c\big)\Delta
\big(B_\infty^m(\wit{x})\cap B_\infty^m(\wit{z_2})\cap B_\infty^m(\wit{y})^c\big) \Big)\\
&\leq2^{2nm}\LL^n\big(B_\infty^m(\wit{z_1})\Delta B_\infty^m(\wit{z_2})\big)
\lesssim2^{2nm}|\wit{z_1}-\wit{z_2}|2^{-m(n-1)}\leq2^{m(n+1)}|z_1-z_2|,
\end{split}
\end{equation}
and the claim follows.
\end{proof}
\begin{proof}[Proof of Claim \ref{F2 eq2}]
The first estimate has been already proved in Claim \ref{F1 eq3}.
Let us deal with the second one. Notice that if
$y\in\supp\varphi^{\,2^{-j}}_{2^{-j-1}}(x-\cdot)$ then
$|\wit x-\wit y|\geq2^{-j-1}2.1\sqrt{n}$. Thus, if also $j<m$ and
$|\widetilde x-\widetilde z|\leq2^{-m}\sqrt{n}$, then $|\wit x-\wit
y|>2^{-m}\sqrt{n}$ and $|\wit z-\wit y|>2^{-m}\sqrt{n}$. Therefore,
$B_\infty^m(\wit{x})\cap B_\infty^m(\wit{z})\cap
B_\infty^m(\wit{y})^c=B_\infty^m(\wit{x})\cap B_\infty^m(\wit{z})$
for all $y\in\supp\varphi^{\,2^{-j}}_{2^{-j-1}}(x-\cdot)$,
if $|\widetilde x-\widetilde z|\leq2^{-m}\sqrt{n}$. This means,
using (\ref{claim 4.1 formula1}), that $\Lambda_m^{\mu}(x,z\,;\,y)$
does not depend on $y$, so $\nabla_y\Lambda_m^{\mu}(x,z\,;\,y)=0$
for all $y\in\supp\varphi^{\,2^{-j}}_{2^{-j-1}}(x-\cdot)$,
and the claim is proved.
\end{proof}
\begin{proof}[Proof of Claim \ref{F3 eq2}]
This claim follows by arguments very similar to the ones in the proof of Claim \ref{F1 eq3}. Just notice that
$\mu(D^{\,b}_{m})\sigma_D(D^{\,b}_{m})\gtrsim2^{-2mn}$ for all $b\in\R^n$.
\end{proof}
\begin{proof}[Proof of Claim \ref{G1 eq1}]
Using (\ref{claim 4.1 formula1}), we have that
$$|\Lambda_m^{\mu}(x,z\,;\,y)|\lesssim2^{2mn}\LL^n(B_\infty^m(\wit{x})\cap B_\infty^m(\wit{z})\cap B_\infty^m(\wit{y})^c)\leq2^{2mn}\LL^n(B_\infty^m(\wit{z})\cap B_\infty^m(\wit{y})^c).$$
Notice that $\LL^n(B_\infty^m(\wit{z})\cap B_\infty^m(\wit{y})^c)\lesssim2^{-m(n-1)}|\wit y-\wit z|$.
Since $z\in\supp\varphi^{\,2^{-j}}_{2^{-j-1}}(\cdot-y)$, $|\wit y-\wit z|\leq2^{-j}3\sqrt{n}$. Thus,
$\LL^n(B_\infty^m(\wit{z})\cap B_\infty^m(\wit{y})^c)\lesssim2^{-m(n-1)-j}$, and then
$|\Lambda_m^{\mu}(x,z\,;\,y)|\lesssim2^{m(n+1)-j}$.

In Claim \ref{F1 eq3} we already proved that $|\nabla_z\Lambda_m^{\mu}(x,z\,;\,y)|\lesssim2^{m(n+1)}.$ Finally, to prove that $|\nabla_y\Lambda_m^{\mu}(x,z\,;\,y)|\lesssim2^{m(n+1)},$ one can repeat the computations done in (\ref{claim 4.1 formula2}) but applied to the $y$ coordinate  and use that $B_\infty^m(\wit{y_1})^c\Delta B_\infty^m(\wit{y_2})^c=B_\infty^m(\wit{y_1})\Delta B_\infty^m(\wit{y_2}).$
\end{proof}

\begin{proof}[Proof of Claim \ref{G3 eq2}]
This claim is included in the previous one.
\end{proof}
\begin{proof}[Proof of Claim \ref{G4 eq1}]
Recall that $\lambda=\sum_{Q\in\DD_{j}\,:\,Q\subset C_eD}\eta_{Q}\sigma_{Q},$ where $C_e$ is some fixed constant big enough (see the beginning of subsection \ref{G s1}). Using the properties of $\eta_Q$ and that $C_e$ is big enough, it is not difficult to show that $\lambda\big(D^{b-\{2^{-m-1}\}^{n}}_{m}\big)\gtrsim2^{-mn}$ for all $b\in\R^n$ such that $b\in B_\infty^m(\wit x)$ (recall that $x\in D$ and $j\geq m-1$). Therefore, by (\ref{claims formula Lambda}),
\begin{equation*}
\begin{split}
|\Lambda_m^{\mu,\lambda}(x,z,t\,;\,y)|&\lesssim2^{3nm}\LL^n\big(B_\infty^m(\wit{x})\cap B_\infty^m(\wit{z})
\cap B_\infty^m(\wit{t})\cap B_\infty^m(\wit{y})^c\big)\\
&\leq2^{3nm}\LL^n\big(B_\infty^m(\wit{z})\cap B_\infty^m(\wit{y})^c\big).
\end{split}
\end{equation*}
As in the proof of Claim \ref{G1 eq1}, we have
$\LL^n\big(B_\infty^m(\wit{z})\cap
B_\infty^m(\wit{y})^c\big)\lesssim2^{-m(n-1)-j}$ for all
$z\in\supp\varphi^ {\,2^{-j}}_{2^{-j-1}}(\cdot-y)$. Thus,
$|\Lambda_m^{\mu,\lambda}(x,z,t\,;\,y)|\lesssim2^{m(2n+1)-j}$, as
wished.

For the second estimate in Claim \ref{G4 eq1}, we argue as in (\ref{claim 4.1 formula2}). For $t_1$ and $t_2$ close enough,
\begin{equation*}
\begin{split}
|\Lambda_m^{\mu,\lambda}(x,&z,t_1\,;\,y)-\Lambda_m^{\mu,\lambda}(x,z,t_2\,;\,y)|\\
&\lesssim2^{3nm}\int
|\chi_{B_\infty^m(\wit{x})\cap B_\infty^m(\wit{z})\cap B_\infty^m(\wit{t_1})\cap B_\infty^m(\wit{y})^c}(b)
-\chi_{B_\infty^m(\wit{x})\cap B_\infty^m(\wit{z})\cap B_\infty^m(\wit{t_2})\cap B_\infty^m(\wit{y})^c}(b)|\,db\\
&\leq2^{3nm}\LL^n\big(B_\infty^m(\wit{t_1})\Delta B_\infty^m(\wit{t_2})\big)
\lesssim2^{3nm}|\wit{t_1}-\wit{t_2}|2^{-m(n-1)}\leq2^{m(2n+1)}|t_1-t_2|,
\end{split}
\end{equation*}
and the claim follows by letting $t_1\to t_2$.
\end{proof}
\begin{proof}[Proof of Claim \ref{G5 eq4}]
This claim is proved as the first estimate in Claim \ref{G4 eq1},
replacing $\mu$ by $\sigma_D$ (we only used that
$\mu(D^{\,b}_{m})\gtrsim2^{-mn}$ for all $b\in\R^n$, which also
holds for $\sigma_D$).
\end{proof}

\section{Proof of Theorem \ref{teorema salts curt martingala}}\label{s teorema salts curt martingala}
Given $x\in\Gamma$, let $\{\epsilon_{m}\}_{m\in\Z}$ be a
decreasing sequence of positive numbers such that
\begin{equation}\label{Short eq7}
S\mu(x)^2\leq2
\sum_{j\in\Z}\,\sum_{m\in\Z:\,\epsilon_{m},\epsilon_{m+1}\in I_j}
|(K\varphi_{\epsilon_{m+1}}^{\,\epsilon_{m}}*\mu)(x)|^{2},
\end{equation}
so $\{\epsilon_{m}\}_{m\in\Z}$ depends on $x$.

Fix $j\in\Z$ and assume that $x\in D$, for some $D\in\DD_j$. Let
$L_{D}$ be an $n$-plane that minimizes $\alpha(D)$ and let
$\sigma_{D}:=c_{D}\HH^{n}_{L_{D}}$ be a minimizing measure for
$\alpha(D)$. Let $L_D^x$ be the $n$-plane parallel to $L_D$
which contains $x$, and set $\sigma^x_{D}:=c_{D}\HH^{n}_{L^x_{D}}$.

By the antisymmetry of the function
$\varphi_{\epsilon_{m+1}}^{\,\epsilon_{m}}K$, and since $\sigma_D^x$ is a
Hausdorff measure on the $n$-plane $L_D^x$ and $x\in L_{D}^x$, we have
\begin{equation*}
(K\varphi_{\epsilon_{m+1}}^{\,\epsilon_{m}}*\sigma_D^x)(x)=\int\varphi_{\epsilon_{m+1}}^{\,\epsilon_{m}}(x-y)K(x-y)\,d\sigma_{D}^x(y)=0
\end{equation*}
for all $m\in\Z$. Therefore, we can decompose
\begin{equation}\label{Short eq1a}
(K\varphi_{\epsilon_{m+1}}^{\,\epsilon_{m}}*\mu)(x)=(K\varphi_{\epsilon_{m+1}}^{\,\epsilon_{m}}*(\mu-\sigma_D))(x)
+(K\varphi_{\epsilon_{m+1}}^{\,\epsilon_{m}}*(\sigma_D-\sigma_D^x))(x).
\end{equation}

For every $m\in\Z$ such that $\epsilon_m,\epsilon_{m+1}\in I_j$ we will prove the following inequalities:
\begin{gather}
|(K\varphi_{\epsilon_{m+1}}^{\,\epsilon_{m}}*(\mu-\sigma_D))(x)|\lesssim2^j|\epsilon_m-\epsilon_{m+1}|\alpha(D),
\label{Short eq1b}\\
|(K\varphi_{\epsilon_{m+1}}^{\,\epsilon_{m}}*(\sigma_D-\sigma_D^x))(x)|\lesssim2^{2j}|\epsilon_m-\epsilon_{m+1}|\dist(x,L_D).
\label{Short eq1c}
\end{gather}
Assume for a moment that these estimates hold. Then, by (\ref{Short eq1a}),
$$|(K\varphi_{\epsilon_{m+1}}^{\,\epsilon_{m}}*\mu)(x)|\lesssim2^j|\epsilon_m-\epsilon_{m+1}|\left(\alpha(D)+2^j\dist(x,L_D)\right).$$
Then, using (\ref{Short eq7}), we conclude that
\begin{equation*}
\begin{split}
\|S\mu\|^{2}_{L^{2}(\mu)}&\leq2
\sum_{j\in\Z}\sum_{D\in\DD_j}\int_D\sum_{m\in\Z:\,\epsilon_{m},\epsilon_{m+1}\in
I_j}|(K\varphi_{\epsilon_{m+1}}^{\,\epsilon_{m}}*\mu)(x)|^{2}\,d\mu(x)\\
&\lesssim\sum_{j\in\Z}\sum_{D\in\DD_j}\int_D\bigg(\alpha(D)+\frac{\dist(x,L_D)}{2^{-j}}\bigg)^{2}
\sum_{\begin{subarray}{c}m\in\Z:\\\epsilon_{m},\epsilon_{m+1}\in I_j\end{subarray}}\bigg(\frac{|\epsilon_m-\epsilon_{m+1}|}{2^{-j}}\bigg)^2\,d\mu(x)\\
&\lesssim\sum_{D\in\DD}\alpha(D)^{2}\mu(D)+
\sum_{D\in\DD}\int_{D}\bigg(\frac{\dist(x,L_{D})}{\ell(D)}\bigg)^{2}\,d\mu(x).
\end{split}
\end{equation*}
Notice that, under the integral sign on the right hand side of the first inequality above, $\epsilon_m$ and $\epsilon_{m+1}$ depend on $x$. It is not obvious that the resulting function is measurable. To deal with this issue more carefully, we might first ask $\{\epsilon_m\}_{m\in\Z}$ to lie in some finite set, prove the variational norm inequality with constants  independent of the set, and then enlarge the set. Then, by monotone convergence we would obtain the result with $\{\epsilon_m\}_{m\in\Z}$ restricted to a countable set dense in $(0,\infty)$. The final theorem would follow then from the continuity properties of the operators involved. This applies to other similar situations in the paper. However, for the
sake of conciseness, we will skip further details.

The second term on the right hand side of the last inequality coincides with $W_1\mu$ (see (\ref{R-E eq6})),
thus it is bounded (modulo constants) by $\sum_{D\in\DD}\big(\alpha(D)^2+\beta_2(D)^2\big)\mu(D)$, and Theorem \ref{teorema salts curt martingala} is proved.

It only remains to verify (\ref{Short eq1b}) and (\ref{Short eq1c}) for $x\in D\in\DD_j$ and $m\in\Z$ such that $\epsilon_m,\epsilon_{m+1}\in I_j$. First of all, notice that $\varphi_{\epsilon_{m+1}}^{\,\epsilon_{m}}$ satisfies
\begin{equation}\label{Short eq1}
\begin{split}
|\varphi_{\epsilon_{m+1}}^{\,\epsilon_{m}}(x-y)|&=\left|\varphi_\R\bigg(\frac{|\widetilde x-\widetilde y|}{\epsilon_{m+1}}\bigg)-\varphi_\R\bigg(\frac{|\widetilde x-\widetilde y|}{\epsilon_{m}}\bigg)\right|
\leq\|\varphi'_\R\|_{L^\infty(\R)}\left|\frac{|\widetilde x-\widetilde y|}{\epsilon_{m+1}}-\frac{|\widetilde x-\widetilde y|}{\epsilon_{m}}\right|\\
&=\|\varphi'_\R\|_{\infty}|\widetilde x-\widetilde y|\,\frac{\epsilon_{m}-\epsilon_{m+1}}{\epsilon_{m}\epsilon_{m+1}}\lesssim2^{j}|\epsilon_{m}-\epsilon_{m+1}|
\end{split}
\end{equation}
for all $y\in\supp\,\varphi_{\epsilon_{m+1}}^{\,\epsilon_m}(x-\cdot)$. For $i=1,\ldots,d$,
$$\partial_{y^i}\left(\varphi_{\epsilon_m}(x-y)\right)=
\varphi'_\R\left(\frac{|\widetilde x-\widetilde y|}{\epsilon_{m}}\right)\frac{y^i-x^i}{\epsilon_m|\widetilde x-\widetilde y|}\,\chi_{[1,n]}(i).$$
Hence,
\begin{equation*}
\begin{split}
\big|\partial_{y^i}\big(\varphi_{\epsilon_{m+1}}^{\,\epsilon_m}&(x-y)\big)\big|\leq\left|\varphi'_\R\left(\frac{|\widetilde x-\widetilde y|}{\epsilon_{m}}\right)\frac{1}{\epsilon_m}-
\varphi'_\R\left(\frac{|\widetilde x-\widetilde y|}{\epsilon_{m+1}}\right)\frac{1}{\epsilon_{m+1}}\right|\\
&\leq\left|\varphi'_\R\left(\frac{|\widetilde x-\widetilde y|}{\epsilon_{m}}\right)\right|\left|\frac{1}{\epsilon_{m}}-\frac{1}{\epsilon_{m+1}}\right|
+\left|\varphi'_\R\left(\frac{|\widetilde x-\widetilde y|}{\epsilon_{m}}\right)-\varphi'_\R\left(\frac{|\widetilde x-\widetilde y|}{\epsilon_{m+1}}\right)\right|\frac{1}{\epsilon_{m+1}}\\
&\leq\left(\|\varphi_\R'\|_{\infty}+\|\varphi''_\R\|_{\infty}\frac{|\widetilde x-\widetilde y|}{\epsilon_{m+1}}\right)\frac{\epsilon_{m}-\epsilon_{m+1}}{\epsilon_{m}\epsilon_{m+1}}.
\end{split}
\end{equation*}
Since $\epsilon_m,\epsilon_{m+1}\in I_j$, we deduce from the previous estimate that, for $x-y\in\supp\,\varphi_{\epsilon_{m+1}}^{\,\epsilon_m}$,
\begin{equation}\label{Short eq2}
|\nabla_{y}\left(\varphi_{\epsilon_{m+1}}^{\,\epsilon_m}(x-y)\right)|\lesssim\frac{\epsilon_{m}-\epsilon_{m+1}}{\epsilon_{m}\epsilon_{m+1}}\approx2^{2j}|\epsilon_{m}-\epsilon_{m+1}|.
\end{equation}

We are going to use (\ref{Short eq1}) and (\ref{Short eq2}) to prove (\ref{Short eq1b}) and (\ref{Short eq1c}). Let us start with (\ref{Short eq1b}). Since $\epsilon_m,\epsilon_{m+1}\in I_j$, we can assume that $\supp\,\varphi_{\epsilon_{m+1}}^{\,\epsilon_m}(x-\cdot)\subset B_D$, by taking $C_\Gamma$ big enough.

By (\ref{Short eq1}) and (\ref{Short eq2}), for all $y\in\supp\,\varphi_{\epsilon_{m+1}}^{\,\epsilon_m}(x-\cdot)$,
$$\left|\nabla_y\left(\varphi_{\epsilon_{m+1}}^{\,\epsilon_{m}}(x-y)K(x-y)\right)\right|\lesssim2^{j(n+2)}|\epsilon_{m}-\epsilon_{m+1}|,$$
hence
\begin{equation*}
\begin{split}
|(K\varphi_{\epsilon_{m+1}}^{\,\epsilon_{m}}*(\mu-\sigma_D))(x)|&\lesssim2^{j(n+2)}|\epsilon_{m}-\epsilon_{m+1}|\dist_{B_D}(\mu,\sigma_D)
\lesssim2^{j}|\epsilon_{m}-\epsilon_{m+1}|\alpha(D),
\end{split}
\end{equation*}
which gives (\ref{Short eq1b}).

In order to prove (\ref{Short eq1c}), set $L^x_{D}=\{(\widetilde t, a(\widetilde t))\in\R^d\,:\,\widetilde t\in\R^n\}$, where $a:\R^n\to\R^{d-n}$ is an appropriate affine map, and let $p:L_{D}\to L_{D}^x$ be the map defined by $p(t):=(\widetilde t,a(\widetilde t))$. Since $\Gamma$ is a Lipschitz graph, $a$ is well defined and $p$ is a homeomorphism. Let $p_{\sharp}\HH^{n}_{L_{D}}$ be the image measure of $\HH^{n}_{L_D}$ by $p$. It is easy to see that,
$|y-p(y)|\approx\dist(x,L_{D})$ for all $y\in L_{D}$. Notice also that the image measure $p_{\sharp}\HH^{n}_{L_{D}}$ coincides with $\HH_{L_{D}^x}^{n}$. Therefore, since $\varphi_{\epsilon_{m+1}}^{\,\epsilon_{m}}(x-y)$ only depends on $\widetilde x-\widetilde y$,
\begin{equation}\label{Short eq1d}
\begin{split}
(K\varphi_{\epsilon_{m+1}}^{\,\epsilon_{m}}*(\sigma_D-\sigma_D^x))&(x)
=c_{D}\int\varphi_{\epsilon_{m+1}}^{\,\epsilon_{m}}(x-y)K(x-y)\,d(\HH^{n}_{L_{D}}-p_{\sharp}\HH^{n}_{L_{D}})(y)\\
&=c_{D}\int\varphi_{\epsilon_{m+1}}^{\,\epsilon_{m}}(x-y)(K(x-y)-K(x-p(y)))\,d\HH^{n}_{L_{D}}(y).
\end{split}
\end{equation}
For $y\in\supp\,\varphi_{\epsilon_{m+1}}^{\,\epsilon_m}(x-\cdot)\cap L_{D},$ we have
\begin{equation*}
|K(x-y)-K(x-p(y))|\lesssim2^{j(n+1)}|y-p(y)|\approx2^{j(n+1)}\dist(x,L_{D}).
\end{equation*}
Plugging this estimate and (\ref{Short eq1}) into (\ref{Short eq1d}), we conclude that
\begin{equation*}
|(K\varphi_{\epsilon_{m+1}}^{\,\epsilon_{m}}*(\sigma_D-\sigma_D^x))(x)|\lesssim2^{2j}|\epsilon_{m}-\epsilon_{m+1}|\dist(x,L_{D}),
\end{equation*}
which gives (\ref{Short eq1c}); and the theorem follows.

\section{$L^2$ localization of $\VV_\rho\circ\TT_{\varphi}^{\HH^{n}_{\Gamma}}$ and $\OO\circ\TT_{\varphi}^{\HH^{n}_{\Gamma}}$}\label{seccio localitzacio}
From here till the end of the paper,
$\Gamma:=\{x\in \R^d\,:\,x=(\widetilde x,A(\widetilde x))\}$ will be the graph of a Lipschitz function $A:\R^n\to\R^{d-n}$, without any assumption on the support of $A$.

\begin{teo}\label{teorema localitzacio}
Let $\rho>2$. There exist $C_1,C_2>0$ such that, for every $f\in L^\infty(\HH^{n}_{\Gamma})$ supported in $\Gamma\cap D$ (where $D:=\widetilde D\times\R^{d-n}$ and $\widetilde D$ is a cube of $\R^n$),
\begin{eqnarray}
\int_{D}\big((\VV_\rho\circ\TT_{\varphi}^{\HH^{n}_{\Gamma}})f\big)^2\,d\HH^{n}_{\Gamma}&\leq&
C_1\|f\|^2_{L^\infty(\HH^{n}_{\Gamma})}\HH^{n}_{\Gamma}(D)\quad\text{and}\label{teorema localitzacio1}\\
\int_{D}\big((\OO\circ\TT_{\varphi}^{\HH^{n}_{\Gamma}})f\big)^2\,d\HH^{n}_{\Gamma}&\leq&C_2
\|f\|^2_{L^\infty(\HH^{n}_{\Gamma})}\HH^{n}_{\Gamma}(D).\label{teorema localitzacio2}
\end{eqnarray}
The constant $C_2$ does not depend on the fixed sequence that defines $\OO$.
\end{teo}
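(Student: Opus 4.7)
The plan is to combine the three $L^2$ bounds already established --- Theorems \ref{osc-var martingala2}, \ref{teorema salts llarg martingala}, and \ref{teorema salts curt martingala} --- through the short-and-long decomposition of the $\rho$-variation described in Section \ref{seccio sketch proof}. The first step is to reduce to the framework of those theorems by ``freezing'' the graph outside a v-cube slightly larger than $D$: I would extend $A$ to a Lipschitz function $A'$ with $\Lip(A')\lesssim\Lip(A)$, agreeing with $A$ on $\widetilde D$ and compactly supported in some cube $\widetilde P\supset 2\widetilde D$, and let $\Gamma'$ be the graph of $A'$. Without loss of generality assume $\|f\|_{L^\infty(\HH^n_\Gamma)}=1$; setting $g:=1+f/2$ on $\Gamma\cap D$ and $g:=1$ elsewhere on $\Gamma'$ gives $1/2\leq g\leq 3/2$ with $g\equiv 1$ outside $\widetilde P$, so that $\mu':=g\HH^n_{\Gamma'}$ fits the hypotheses of the cited theorems with cube $\widetilde P$. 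The identity $f\HH^n_\Gamma=2(\mu'-\HH^n_{\Gamma'})$ together with the triangle inequality for $\VV_\rho$ and $\OO$ reduces the problem to bounding $\VV_\rho(K\varphi*\mu)$ and $\OO(K\varphi*\mu)$ in $L^2(\mu)$ for each of the two measures $\mu\in\{\mu',\HH^n_{\Gamma'}\}$.

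For such a $\mu$ and a decreasing sequence $\{\epsilon_m\}$, I would split the indices $m\in\Z$ into those for which $\epsilon_m$ and $\epsilon_{m+1}$ lie in a common $I_j:=[2^{-j-1},2^{-j})$ (\emph{short}) and those for which they lie in distinct ones (\emph{long}). Since $\rho>2$ gives the elementary inequality $(\sum_i a_i^\rho)^{1/\rho}\leq(\sum_i a_i^2)^{1/2}$ for nonnegative sequences, the short contribution is pointwise dominated by the square function $S\mu$ of Theorem \ref{teorema salts curt martingala}. For the long contribution, inserting at most two dyadic numbers in each long interval $[\epsilon_{m+1},\epsilon_m]$ reduces the problem to bounding $\VV_\rho(\{K\varphi_{2^{-r}}*\mu\}_{r\in\Z})$ plus further short terms; writing $K\varphi_{2^{-r}}*\mu=E_r\mu+(K\varphi_{2^{-r}}*\mu-E_r\mu)$ and applying the triangle inequality for $\VV_\rho$ splits this into a martingale part controlled by $\VV_\rho(E\mu)$ --- handled by Theorem \ref{osc-var martingala2} --- and a remainder dominated by $W\mu$ (again via the same elementary inequality and Theorem \ref{teorema salts llarg martingala}). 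Altogether,
\[
\VV_\rho(K\varphi*\mu)(x)\lesssim S\mu(x)+W\mu(x)+\VV_\rho(E\mu)(x),
\]
and taking $L^2(\mu)$-norms yields $\|\VV_\rho(K\varphi*\mu)\|_{L^2(\mu)}^2\lesssim\mu(\widetilde P)$ once the $\alpha$- and $\beta$-sums appearing in Theorems \ref{teorema salts llarg martingala} and \ref{teorema salts curt martingala} are controlled via Remark \ref{remark packing}. Since $\mu(\widetilde P)\lesssim\HH^n_\Gamma(D)$ and $\HH^n_\Gamma|_D=\HH^n_{\Gamma'}|_D\leq\mu'|_D$, restricting to $D$ yields (\ref{teorema localitzacio1}). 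The oscillation estimate (\ref{teorema localitzacio2}) follows by the same scheme, and the elementary inequality step is now unnecessary because $\OO$ is intrinsically an $\ell^2$-type object; the independence of the constant on the defining sequence of $\OO$ is inherited from Theorem \ref{osc-var martingala2}.

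The main technical difficulty, beyond the triangle-inequality bookkeeping above, lies in controlling the \emph{ambient} sum $\sum_{Q\in\DD}(\alpha_\mu(C_2Q)^2+\beta_{2,\mu}(Q)^2)\mu(Q)$ produced by Theorems \ref{teorema salts llarg martingala} and \ref{teorema salts curt martingala} by a multiple of $\mu(\widetilde P)$. Remark \ref{remark packing} only handles sums over v-cubes $Q\subset C_1R$ for a fixed v-cube $R$, whereas here the sum ranges over all of $\DD$. The tail of v-cubes $Q\in\DD$ with $\ell(Q)\gg\ell(\widetilde P)$ must be handled separately using the flatness of $\Gamma'$ outside $\widetilde P$: on such $Q$, the coefficients $\alpha_\mu(C_2Q)$ and $\beta_{2,\mu}(Q)$ decay like a fixed positive power of $\ell(\widetilde P)/\ell(Q)$, and the resulting geometric sum contributes at most $O(\mu(\widetilde P))$.
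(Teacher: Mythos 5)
Your proposal is correct and uses the same short/long decomposition machinery (reducing to $S\mu$, $W\mu$, $\VV_\rho(E\mu)$ and invoking Theorems \ref{osc-var martingala2}, \ref{teorema salts llarg martingala}, \ref{teorema salts curt martingala}), but the \emph{reduction} to that machinery is genuinely different from the paper's, and in fact cleaner. The paper first normalizes to $f\approx1$, then defines $\mu:=\HH^n_{\Gamma_D\setminus D}+f\HH^n_{\Gamma_D\cap D}$, and writes $\chi_D\mu=\mu-\chi_{(3D)^c}\mu-\chi_{3D\setminus D}\mu$; the full measure $\mu$ is handled by the three theorems (as in your subsection), but the pieces $\chi_{(3D)^c}\mu$ and $\chi_{3D\setminus D}\mu$ fall \emph{outside} the hypotheses of those theorems and require separate direct kernel estimates (subsections \ref{ss localitzacio1} and the one after it). Your identity $f\HH^n_\Gamma=2(\mu'-\HH^n_{\Gamma'})$, with $\mu'=g\HH^n_{\Gamma'}$ and $g=1+f/2$, expresses the truncated operators applied to $f\HH^n_\Gamma$ as a difference of the same operators applied to two measures that \emph{both} satisfy the hypotheses of Theorems \ref{osc-var martingala2}--\ref{teorema salts curt martingala}, so the sublinearity of $\VV_\rho$ and $\OO$ collapses the whole argument into two applications of the same lemma and no auxiliary tail-of-the-kernel analysis is needed. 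This buys a shorter proof at the cost of one small additional care: your extension $A'$ should be taken with $A'(\widetilde z_D)$ (rather than $0$) outside $\widetilde P$ and then translated, or equivalently one subtracts the constant $A(\widetilde z_D)$ before extending, so that Theorem \ref{osc-var martingala2}'s requirement $\supp A'\subset\widetilde P$ is met; and $\widetilde P$ should be fixed comparable to $\widetilde D$ (say $\widetilde P=4\widetilde D$) so that $\mu'(P)\approx\HH^n_\Gamma(D)$ at the end. You also correctly flag that the packing sum $\sum_{Q\in\DD}(\alpha_\mu(C_2Q)^2+\beta_{2,\mu}(Q)^2)\mu(Q)$ coming out of Theorems \ref{teorema salts llarg martingala}--\ref{teorema salts curt martingala} ranges over \emph{all} of $\DD$, whereas Remark \ref{remark packing} only covers $Q\subset C_1R$; the paper dismisses this with ``it is easy to check,'' while you explicitly supply the needed observation that $\alpha_\mu$ and $\beta_{2,\mu}$ decay geometrically in $\ell(\widetilde P)/\ell(Q)$ on scales larger than $\widetilde P$ because $\mu$ is flat there, making the tail contribute $O(\mu(\widetilde P))$. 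This is a correct and complete plan.
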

We will only give the proof of (\ref{teorema localitzacio1}), because the proof of (\ref{teorema localitzacio2}) follows by very similar arguments and computations.

We claim that it is enough to prove (\ref{teorema localitzacio1}) for all functions $f$ such that $f(x)\approx1$ for all $x\in\Gamma\cap D$. Otherwise, we consider $g(x):=\|f\|_{L^\infty(\HH^{n}_{\Gamma})}^{-1}f(x)+2\chi_{D}(x),$ which clearly satisfies $g(x)\approx1$ for all $x\in\Gamma\cap D$. Since $f=\|f\|_{L^\infty(\HH^{n}_{\Gamma})}\left(g-2\chi_{D}\right)$,
$$(\VV_\rho\circ\TT_{\varphi}^{\HH^{n}_{\Gamma}})f(x)\leq\|f\|_{L^\infty(\HH^{n}_{\Gamma})}\big((\VV_\rho\circ\TT_{\varphi}^{\HH^{n}_{\Gamma}})g(x)
+2(\VV_\rho\circ\TT_{\varphi}^{\HH^{n}_{\Gamma}})\chi_D(x)\big).$$
Applying (\ref{teorema localitzacio1}) to the functions $g$ and $\chi_D$, we finally get
$$\int_{D}\big((\VV_\rho\circ\TT_{\varphi}^{\HH^{n}_{\Gamma}})f\big)^2\,d\HH^{n}_{\Gamma}\lesssim\|f\|_{L^\infty(\HH^{n}_{\Gamma})}^2\HH^{n}_{\Gamma}(D).$$

Given $f$ and $D$ as in Theorem \ref{teorema localitzacio}, from now on, we assume that $f\approx1$ in $\Gamma\cap D$. Let $\widetilde z_D$ be the center of $\widetilde D$ and set $z_D:=(\widetilde z_D,A(\widetilde z_D))$. One can easily construct a Lipschitz function $A_D:\R^n\to\R^{d-n}$ such that $\Lip(A_D)\lesssim\Lip(A)$, $A_D(\widetilde x)=A(\widetilde z_D)$ for all $\widetilde x\in(3\widetilde D)^c$, and $A_D(\widetilde x)=A(\widetilde x)$ for all $\widetilde x\in\widetilde D$. Let $\Gamma_D$ be the graph of $A_D$ and define the measure $\mu:=\HH^{n}_{\Gamma_D\setminus D}+f\HH^{n}_{\Gamma_D\cap D}$. Notice that $\chi_{(3D)^c}\mu$ is supported in the $n$-plane $L:=\R^n\times\{A(\widetilde z_D)\}$ and $\chi_D\mu=f\HH^{n}_{\Gamma\cap D}$.

Since $f\approx1$ in $\Gamma\cap D$ and $\chi_D\mu=(1-\chi_{(3D)^c}-\chi_{3D\setminus D})\mu$, we can decompose
\begin{equation*}
\begin{split}
\int_{D}\big((\VV_\rho\circ\TT_{\varphi}^{\HH^{n}_{\Gamma}})&f\big)^2\,d\HH^{n}_{\Gamma}\approx\int_{D}\VV_\rho(K\varphi*(\chi_D\mu))^2\,d\mu\\
&\lesssim\int_{D}\left(\VV_\rho(K\varphi*\mu)+\VV_\rho(K\varphi*(\chi_{(3D)^c}\mu))
+\VV_\rho(K\varphi*(\chi_{3D\setminus D}\mu))\right)^2\,d\mu.
\end{split}
\end{equation*}

In the next subsections, we will see that $\int_{D}\VV_\rho(K\varphi*\mu)^2\,d\mu$,
$\int_{D}\VV_\rho(K\varphi*(\chi_{(3D)^c}\mu))^2\,d\mu$, and $\int_{D}\VV_\rho(K\varphi*(\chi_{3D\setminus D}\mu))^2\,d\mu$ are bounded by $C\mu(D)$, and (\ref{teorema localitzacio1}) will follow.

\subsection{Proof of $\int_D\VV_\rho(K\varphi*\mu)^2\,d\mu\lesssim\mu(D)$}
Fix $x\in\supp\mu$, and let $\{\epsilon_m\}_{m\in\Z}$ be a decreasing sequence of positive numbers (which depends on $x$) such that
\begin{equation}\label{teorema localitzacio8}
\left(\VV_\rho(K\varphi*\mu)(x)\right)^{\rho}\leq2
\sum_{m\in\Z}|(K\varphi_{\epsilon_{m+1}}^{\,\epsilon_m}*\mu)(x)|^{\rho}.
\end{equation}
For $j\in\Z$ we set $I_j:=[2^{-j-1},2^{-j})$. We decompose $\Z=\SSS\cup\LL$, where
\begin{equation}\label{teorema interpolacio H1-5}
\begin{split}
&\SSS:=\bigcup_{j\in\Z}\SSS_j,\quad\SSS_j:=\{m\in\Z\,:\,\epsilon_m,\epsilon_{m+1}\in I_j\},\\
&\LL:=\{m\in\Z\,:\,\epsilon_m\in I_i,\,\epsilon_{m+1}\in I_j\text{ for }i<j\}.
\end{split}
\end{equation}
Then, $\sum_{m\in\Z}|(K\varphi_{\epsilon_{m+1}}^{\,\epsilon_m}*\mu)(x)|^{\rho}=
\sum_{m\in\SSS}|(K\varphi_{\epsilon_{m+1}}^{\,\epsilon_m}*\mu)(x)|^{\rho}+
\sum_{m\in\LL}|(K\varphi_{\epsilon_{m+1}}^{\,\epsilon_m}*\mu)(x)|^{\rho}$.

Notice that, since the $\ell^\rho(\Z)$-norm is smaller than the $\ell^2(\Z)$-norm for $\rho>2$,
\begin{equation}\label{teorema localitzacio5}
\sum_{m\in\SSS}|(K\varphi_{\epsilon_{m+1}}^{\,\epsilon_m}*\mu)(x)|^{\rho}\leq S\mu(x)^\rho,
\end{equation}
where $S\mu(x)$ has been defined in Theorem \ref{teorema salts curt martingala}.

Let us now estimate the sum over the indices $m\in\LL$. For $m\in\Z$ we define $j(\epsilon_m)$ as the integer such that $\epsilon_m\in I_{j(\epsilon_m)}$. Since $\{\epsilon_m\}_{m\in\Z}$ is decreasing, given $j\in\Z$, there is at most one index $m\in\LL$ such that $\epsilon_m\in I_j$. Thus, if $k,m\in\LL$ and $k<m$, one has $j(\epsilon_k)<j(\epsilon_m)$.

With this notation, we have
\begin{equation}\label{teorema localitzacio6}
\begin{split}
\sum_{m\in\LL}|(K\varphi&_{\epsilon_{m+1}}^{\,\epsilon_m}*\mu)(x)|^{\rho}
=\sum_{m\in\LL}|(K\varphi_{\epsilon_{m+1}}*\mu)(x)-(K\varphi_{\epsilon_m}*\mu)(x)|^{\rho}\\
&\lesssim\sum_{m\in\LL}|(K\varphi_{\epsilon_{m+1}}*\mu)(x)-(K\varphi_{2^{-j(\epsilon_{m+1})-1}}*\mu)(x)|^{\rho}\\
&\quad+\sum_{m\in\LL}|(K\varphi_{2^{-j(\epsilon_{m+1})-1}}*\mu)(x)-E_{j(\epsilon_{m+1})+1}\mu(x)|^{\rho}\\
&\quad+\sum_{m\in\LL}|E_{j(\epsilon_{m+1})+1}\mu(x)-E_{j(\epsilon_m)+1}\mu(x)|^{\rho}\\
&\quad+\sum_{m\in\LL}|E_{j(\epsilon_m)+1}\mu(x)-(K\varphi_{2^{-j(\epsilon_m)-1}}*\mu)(x)|^{\rho}\\
&\quad+\sum_{m\in\LL}|(K\varphi_{2^{-j(\epsilon_m)-1}}*\mu)(x)-(K\varphi_{\epsilon_m}*\mu)(x)|^{\rho}\\
&\lesssim S\mu(x)^\rho+W\mu(x)^\rho+\VV_\rho(E\mu)(x)^\rho,
\end{split}
\end{equation}
where $S\mu(x)$ and $W\mu(x)$ have been defined in Theorems
\ref{teorema salts curt martingala} and \ref{teorema salts llarg
martingala}, respectively, and $\VV_\rho(E\mu)$ is the
$\rho$-variation of the average of martingales $\{E_m\mu\}_{m\in\Z}$
from subsection \ref{ss particular martingale}. Therefore, by
(\ref{teorema localitzacio5}), (\ref{teorema localitzacio6}), and
(\ref{teorema localitzacio8}), we deduce that
\begin{equation*}
\VV_\rho(K\varphi*\mu)(x)\lesssim S\mu(x)+W\mu(x)+\VV_\rho(E\mu)(x)
\end{equation*}
for all $x\in\supp\mu$, and so
\begin{equation}\label{teorema localitzacio7}
\int_D\VV_\rho(K\varphi*\mu)^2\,d\mu\lesssim \|S\mu\|^2_{L^2(\mu)}+\|W\mu\|^2_{L^2(\mu)}+\|\VV_\rho(E\mu)\|^2_{L^2(\mu)}.
\end{equation}

Clearly, Theorem \ref{osc-var martingala2}, Theorem \ref{teorema salts llarg martingala}, and Theorem \ref{teorema salts curt martingala} can be applied to the measure $\mu$, because $\supp\mu$ is a translation of the graph of a Lipschitz function with compact support. These theorems in combination with (\ref{teorema localitzacio7}) yield
\begin{equation}\label{teorema localitzacio9}
\int_D\VV_\rho(K\varphi*\mu)^2\,d\mu\leq C_1\bigg(\mu(3D)+\sum_{Q\in\DD}\big(\alpha_{\mu}(C_2Q)^{2}+\beta_{2,\mu}(Q)^2\big)\mu(Q)\bigg),\\
\end{equation}
where $C_1,C_2>0$ only depend on $n$, $d$, $K$, $\Lip(A)$, and $\rho$ (the condition $\rho>2$ is used to ensure the $L^2$ boundedness of $\VV_\rho(E\mu)$). Obviously, $\mu(3D)\approx\mu(D)$ and, since $\chi_{(3D)^c}\mu$ coincides with the $n$-dimensional Hausdorff measure on an $n$-plane, using Remark \ref{remark packing} it is easy to check that $\sum_{Q\in\DD}\big(\alpha_{\mu}(C_2Q)^{2}+\beta_{2,\mu}(Q)^2\big)\mu(Q)\lesssim\mu(3D)$. Hence, we conclude that $\int_D\VV_\rho(K\varphi*\mu)^2\,d\mu\lesssim\mu(D)$ by (\ref{teorema localitzacio9}).

\subsection{Proof of $\int_D\VV_\rho(K\varphi*(\chi_{(3D)^c}\mu))^2\,d\mu\lesssim\mu(D)$}\label{ss localitzacio1}
Fix $x\in\supp\mu\cap D$, and let $\{\epsilon_m\}_{m\in\Z}$ be a decreasing sequence of positive numbers (which depends on $x$) such that
\begin{equation}\label{teorema localitzacio10}
\left(\VV_\rho(K\varphi*(\chi_{(3D)^c}\mu))(x)\right)^\rho\leq2
\sum_{m\in\Z}|(K\varphi_{\epsilon_{m+1}}^{\,\epsilon_m}*(\chi_{(3D)^c}\mu))(x)|^{\rho}.
\end{equation}

Recall that $\widetilde z_D$ is the center of $\widetilde D$, $z_D:=(\widetilde z_D,A(\widetilde z_D))$ and
$L:=\R^n\times\{A(\widetilde z_D)\}$.
Since $\chi_{(3D)^c}\mu=\HH^n_{L\setminus3D}$ and $z_D$ is the center of $L\cap D$, $(K\varphi_{\epsilon}^{\delta}*(\chi_{(3D)^c}\mu))(z_D)=0$ for all $0<\epsilon\leq\delta$. Thus,
$|(K\varphi_{\epsilon_{m+1}}^{\,\epsilon_m}*(\chi_{(3D)^c}\mu))(x)|=
|(K\varphi_{\epsilon_{m+1}}^{\,\epsilon_m}*(\chi_{(3D)^c}\mu))(x)
-(K\varphi_{\epsilon_{m+1}}^{\,\epsilon_m}*(\chi_{(3D)^c}\mu))(z_D)|\leq\Theta1_m+\Theta2_m,$ where
\begin{equation}\label{teorema localitzacio11}
\begin{split}
&\Theta1_m:=\int_{(3D)^c}\varphi_{\epsilon_{m+1}}^{\,\epsilon_m}(x-y)|K(x-y)-K(z_D-y)|\,d\mu(y),\\
&\Theta2_m:=\int_{(3D)^c}|\varphi_{\epsilon_{m+1}}^{\,\epsilon_m}(x-y)-
\varphi_{\epsilon_{m+1}}^{\,\epsilon_m}(z_D-y)|K(z_D-y)\,d\mu(y).
\end{split}
\end{equation}

Since $x\in\supp\mu\cap D$ and $A$ is a Lipschitz function, we have $|x-z_D|\lesssim\ell(D)$, and then
$|K(x-y)-K(z_D-y)|\lesssim|x-z_D||z_D-y|^{-n-1}\lesssim\ell(D)|z_D-y|^{-n-1}$ for all $y\in(3D)^c$. Therefore, using that $\sum_{m\in\Z}\varphi_{\epsilon_{m+1}}^{\,\epsilon_m}\leq1$ and that $\rho>1$,
\begin{equation}\label{teorema localitzacio12}
\begin{split}
\bigg(\sum_{m\in\Z}\Theta1_m^\rho\bigg)^{1/\rho}\leq\sum_{m\in\Z}\Theta1_m\lesssim
\int_{(3D)^c}\ell(D)|z_D-y|^{-n-1}\,d\mu(y)\lesssim1.
\end{split}
\end{equation}

To deal with $\Theta2_m$, we decompose $\Z=\SSS\cup\LL$ as in (\ref{teorema interpolacio H1-5}). As before, given $m\in\Z$, let $j(\epsilon_m)$ be the integer such that $\epsilon_m\in I_{j(\epsilon_m)}$.
Observe that
$$\supp\,\varphi_{\epsilon_{m+1}}^{\,\epsilon_m}(x-\cdot)\subset A(\wit x,2.1\sqrt{n}2^{-j(\epsilon_{m+1})-1},3\sqrt{n}2^{-j(\epsilon_m)})\times\R^{d-n}=:A_m(x).$$
Notice also that the sets $A_m(x)$ have finite overlap for $m\in\LL$, and the same is true for the sets $A'_j(x):=A(\wit x,2.1\sqrt{n}2^{-j-1},3\sqrt{n}2^{-j})\times\R^{d-n}$ for $j\in\Z$. The same observations hold if we replace $x$ by $z_D$ (and $\wit x$ by $\wit z_D$). Obviously, $A_m(x)\subset A'_j(x)$ (and $A_m(z_D)\subset A'_j(z_D)$) for $m\in\SSS_j$.

Assume that $m\in\SSS$. With the same computations as those carried out in (\ref{Short eq2}), one can easily prove that, for all $z-y\in\supp\varphi_{\epsilon_{m+1}}^{\,\epsilon_m}$,
\begin{gather*}
|\nabla_z\big(\varphi_{\epsilon_{m+1}}^{\,\epsilon_m}(z-y)\big)|\lesssim
\left(\|\varphi_\R'\|_{L^\infty(\R)}
+\|\varphi''_\R\|_{L^\infty(\R)}\frac{|\widetilde z-\widetilde y|}{\epsilon_{m+1}}\right)
\frac{\epsilon_m-\epsilon_{m+1}}{\epsilon_m\epsilon_{m+1}}
\lesssim2^{j(\epsilon_m)}\frac{|\epsilon_m-\epsilon_{m+1}|}{|\widetilde z-\widetilde y|},
\end{gather*}
because $|\widetilde z-\widetilde y|\approx\epsilon_m\approx\epsilon_{m+1}\approx2^{j(\epsilon_m)}$ for all $z-y\in\supp\varphi_{\epsilon_{m+1}}^{\,\epsilon_m}$ and $m\in\SSS$. In particular, if $z\in D$ and $y\in(3D)^c$, $|\nabla_z\big(\varphi_{\epsilon_{m+1}}^{\,\epsilon_m}(z-y)\big)|\lesssim
2^{j(\epsilon_m)}|\epsilon_m-\epsilon_{m+1}||\widetilde z_D-\widetilde y|^{-1}$.
Hence,
\begin{equation*}
\Theta2_m\lesssim\int_{(A_m(x)\,\cup\,A_m(z_D))\setminus3D}
\ell(D)2^{j(\epsilon_m)}\frac{|\epsilon_m-\epsilon_{m+1}|}{|\wit z_D-\wit y|^{n+1}}\,d\mu(y),
\end{equation*}
and then,
\begin{equation}\label{teorema localitzacio13}
\begin{split}
\bigg(\sum_{m\in\SSS}\Theta2_m^\rho\bigg)^{1/\rho}
&\lesssim\sum_{m\in\SSS}\int_{(A_m(x)\,\cup\,A_m(z_D))\setminus3D}
\ell(D)2^{j(\epsilon_m)}\frac{|\epsilon_m-\epsilon_{m+1}|}{|\wit z_D-\wit y|^{n+1}}\,d\mu(y)\\
&\leq\sum_{j\in\Z}\int_{(A'_j(x)\,\cup\,A'_j(z_D))\setminus3D}
\frac{\ell(D)}{|\wit z_D-\wit y|^{n+1}}\sum_{m\in\SSS_j}
\frac{|\epsilon_m-\epsilon_{m+1}|}{2^{-j}}\,d\mu(y)\\
&\lesssim\int_{(3D)^c}\frac{\ell(D)}{|\wit z_D-\wit y|^{n+1}}\,d\mu(y)\lesssim1.
\end{split}
\end{equation}

Assume now that $m\in\LL$. It is easy to check that $|\nabla_z\big(\varphi_{\epsilon_{m+1}}^{\,\epsilon_m}(z-y)\big)|\lesssim|\wit z-\wit y|^{-1}$ for all $z,y\in\R^d$. So, if also $z\in D$ and $y\in(3D)^c$, $|\nabla_z\big(\varphi_{\epsilon_{m+1}}^{\,\epsilon_m}(z-y)\big)|\lesssim|\wit z_D-\wit y|^{-1}$. Therefore,
\begin{equation}\label{teorema localitzacio14}
\begin{split}
\bigg(\sum_{m\in\LL}\Theta2_m^\rho\bigg)^{1/\rho}
&\lesssim\sum_{m\in\LL}\int_{(A_m(x)\,\cup\,A_m(z_D))\setminus3D}\frac{\ell(D)}{|z_D-y|^{n+1}}\,d\mu(y)\\
&\lesssim\int_{(3D)^c}\frac{\ell(D)}{|\wit z_D-\wit y|^{n+1}}\,d\mu(y)\lesssim1.
\end{split}
\end{equation}

Finally combining (\ref{teorema localitzacio12}), (\ref{teorema localitzacio13}), and (\ref{teorema localitzacio14}), with (\ref{teorema localitzacio10}) and the fact that 
$(K\varphi_{\epsilon_{m+1}}^{\,\epsilon_m}*(\chi_{(3D)^c}\mu))(x)|
\leq\Theta1_m+\Theta2_m$, we conclude that
\begin{equation*}
\VV_\rho(K\varphi*(\chi_{(3D)^c}\mu))(x)
\lesssim\bigg(\sum_{m\in\Z}\Theta1_m^\rho\bigg)^{1/\rho}
+\bigg(\sum_{m\in\SSS}\Theta2_m^\rho\bigg)^{1/\rho}
+\bigg(\sum_{m\in\LL}\Theta2_m^\rho\bigg)^{1/\rho}\lesssim1
\end{equation*}
for all $x\in\supp\mu\cap D$. Therefore, $\int_D\VV_\rho(K\varphi*(\chi_{(3D)^c}\mu))^2\,d\mu\lesssim\mu(D)$.

\subsection{Proof of $\int_D\VV_\rho(K\varphi*(\chi_{3D\setminus D}\mu))^2\,d\mu\lesssim\mu(D)$}
Fix $x\in\supp\mu\cap D$. Since $\rho>1$,
\begin{equation*}
\begin{split}
\VV_\rho(K\varphi*&(\chi_{3D\setminus D}\mu))(x)=
\sup_{\{\epsilon_m\}}\bigg(\sum_{m\in\Z}
\bigg|\int_{3D\setminus D}\varphi_{\epsilon_{m+1}}^{\,\epsilon_m}(x-y)K(x-y)\,d\mu(y)\bigg|^{\rho}\bigg)^{1/\rho}\\
&\leq\sup_{\{\epsilon_m\}}\sum_{m\in\Z}
\int_{3D\setminus D}\varphi_{\epsilon_{m+1}}^{\,\epsilon_m}(x-y)|K(x-y)|\,d\mu(y)
\leq\int_{3D\setminus D}|K(x-y)|\,d\mu(y).
\end{split}
\end{equation*}
By a standard computation, one can show that $$\int_D\bigg(\int_{3D\setminus D}|K(x-y)|\,d\mu(y)\bigg)^{2}d\mu(x)\lesssim\mu(D),$$
hence we conclude that $\int_D\VV_\rho(K\varphi*(\chi_{3D\setminus D}\mu))^2d\mu\lesssim\mu(D).$

\section{$L^p$ and endpoint estimates for $\VV_\rho\circ\TT_{\varphi}^{\HH^{n}_{\Gamma}}$ and
$\OO\circ\TT_{\varphi}^{\HH^{n}_{\Gamma}}$}\label{seccio Lp suau}

We denote by $H^1(\HH^{n}_{\Gamma})$ and $BMO(\HH^{n}_{\Gamma})$ the (atomic) Hardy space and the space of functions with bounded mean oscillation, respectively, with respect to the measure $\HH^{n}_{\Gamma}$. These spaces are defined as the classical $H^1(\R^d)$ and $BMO(\R^d)$ (see \cite[Chapter 6]{Duoandi}, for example), but by replacing the true cubes of $\R^d$ by our special v-cubes.

\begin{teo}\label{teorema interpolacio}
Let $\rho>2$. The operators $\VV_\rho\circ\TT_{\varphi}^{\HH^{n}_{\Gamma}}$ and $\OO\circ\TT_{\varphi}^{\HH^{n}_{\Gamma}}$ are bounded
\begin{itemize}
\item in $L^p(\HH^{n}_{\Gamma})$ for $1<p<\infty$,
\item from $H^1(\HH^{n}_{\Gamma})$ to $L^1(\HH^{n}_{\Gamma})$,
\item from $L^1(\HH^{n}_{\Gamma})$ to $L^{1,\infty}(\HH^{n}_{\Gamma})$, and
\item from $L^\infty(\HH^{n}_{\Gamma})$ to $BMO(\HH^{n}_{\Gamma})$,
\end{itemize}
and the norm of $\OO\circ\TT_{\varphi}^{\HH^{n}_{\Gamma}}$ in the cases above is bounded independently of the sequence that defines $\OO$.
\end{teo}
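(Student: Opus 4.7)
The plan is to deduce Theorem \ref{teorema interpolacio} from the local $L^2$ estimate of Theorem \ref{teorema localitzacio} by adapting the standard Calder\'on-Zygmund toolkit to the operators $\VV_\rho\circ\TT_\varphi^\mu$ and $\OO\circ\TT_\varphi^\mu$, where we write $\mu:=\HH^n_\Gamma$ throughout. The crucial structural feature that makes such adaptations possible is subadditivity: Minkowski's inequality applied inside the supremum defining the $\rho$-variation (and analogously for the oscillation) yields $\VV_\rho(\TT_\varphi^\mu(f_1+f_2))(x)\leq\VV_\rho(\TT_\varphi^\mu f_1)(x)+\VV_\rho(\TT_\varphi^\mu f_2)(x)$, so both operators behave as sublinear maps for the purposes of splitting arguments.

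I would begin with the $L^\infty\to BMO$ estimate. Fix a v-cube $D$ and $f\in L^\infty(\mu)$, split $f=f_1+f_2$ with $f_1:=f\chi_{2D}$, pick an arbitrary base point $x_0\in D\cap\Gamma$, and choose the constant $c_D:=(\VV_\rho\circ\TT_\varphi^\mu)f_2(x_0)$. Subadditivity gives
\begin{equation*}
|(\VV_\rho\circ\TT_\varphi^\mu)f(x)-c_D|\leq(\VV_\rho\circ\TT_\varphi^\mu)f_1(x)+|(\VV_\rho\circ\TT_\varphi^\mu)f_2(x)-(\VV_\rho\circ\TT_\varphi^\mu)f_2(x_0)|.
\end{equation*}
The first summand contributes $\lesssim\|f\|_{L^\infty(\mu)}^2\mu(D)$ after integrating over $D$, directly from Theorem \ref{teorema localitzacio}. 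For the second summand one reuses, almost verbatim, the pointwise estimates carried out in subsection \ref{ss localitzacio1}: writing the difference of $\rho$-variations as a sup over sequences $\{\epsilon_m\}$ and decomposing the index set into the short-variation part $\SSS$ and the long-variation part $\LL$ as in \eqref{teorema interpolacio H1-5}, one bounds $|T_{\varphi_\epsilon}^\mu f_2(x)-T_{\varphi_\epsilon}^\mu f_2(x_0)|$ using the smoothness of $K\varphi_\epsilon$ together with $|x-x_0|\lesssim\ell(D)$ and $\mathrm{dist}(x,\supp f_2)\gtrsim\ell(D)$, then sums the resulting tails in $\ell^\rho$ using that each annular contribution has finite overlap. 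The outcome is $\VV_\rho(\TT_\varphi^\mu f_2)(x)-\VV_\rho(\TT_\varphi^\mu f_2)(x_0)\lesssim\|f\|_{L^\infty(\mu)}$ uniformly in $x\in D$, which combined with the local bound closes the BMO estimate for $\VV_\rho\circ\TT_\varphi^\mu$; the argument for $\OO\circ\TT_\varphi^\mu$ is identical (and independent of the fixed sequence).

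Next I would treat the $H^1\to L^1$ endpoint via atoms. For an atom $a$ supported on a v-cube $D$ with $\|a\|_{L^\infty(\mu)}\leq\mu(D)^{-1}$ and $\int a\,d\mu=0$, the integral over $2D$ is handled by Cauchy-Schwarz and Theorem \ref{teorema localitzacio}. For $x\in(2D)^c$ one uses cancellation of the atom and the smoothness of $K\varphi_\epsilon^\delta$: for any $x_D\in D\cap\Gamma$, the pointwise quantity $(\VV_\rho\circ\TT_\varphi^\mu)a(x)$ is dominated by $\sup_{\{\epsilon_m\}}(\sum_m|\int(K\varphi_{\epsilon_{m+1}}^{\epsilon_m}(x-y)-K\varphi_{\epsilon_{m+1}}^{\epsilon_m}(x-x_D))a(y)\,d\mu(y)|^\rho)^{1/\rho}$, and after splitting the sum into short and long ranges and telescoping the long-variation piece at dyadic scales one gets the pointwise decay $\lesssim\ell(D)/|x-x_D|^{n+1}$, which is integrable over $(2D)^c$ with total mass bounded by a constant. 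Interpolation between $H^1\to L^1$ and $L^\infty\to BMO$ (valid for sublinear operators by the standard argument, since the bad and good parts of the Calder\'on-Zygmund decomposition can be treated separately thanks to subadditivity) then yields $L^p$ boundedness for $1<p<\infty$, and in particular the $L^2$ bound.

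Finally, the $L^1\to L^{1,\infty}$ bound follows from the $L^2$ bound by a Calder\'on-Zygmund decomposition at level $\lambda$: the good part is handled by Chebyshev and the $L^2$ estimate, while for the bad part, which is a sum of oscillating pieces supported on disjoint v-cubes, subadditivity and the same kernel-smoothness estimate used for atoms above reduce matters to controlling an integrable tail outside each expanded v-cube and a direct volume bound on the union of these expanded cubes; this is exactly the scheme of \cite[Theorem B]{CJRW-singular integrals} adapted to the measure $\mu$ and to our v-cubes. The main technical obstacle is the $L^\infty\to BMO$ step: because $\VV_\rho$ and $\OO$ are not linear, the usual choice of $c_D$ as the value of an operator on $f_2$ has to be done pointwise, and the ensuing comparison $(\VV_\rho\circ\TT_\varphi^\mu)f_2(x)-(\VV_\rho\circ\TT_\varphi^\mu)f_2(x_0)$ must be controlled by an $\ell^\rho$ sum of kernel-smoothness increments—this is what forces the long/short splitting already exploited in subsection \ref{ss localitzacio1} to be revisited with two distinct evaluation points rather than the single point $z_D$ used there.
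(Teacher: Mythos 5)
Your proposal follows essentially the same route as the paper: split $f$ locally and globally, use Theorem \ref{teorema localitzacio} for the local piece, exploit sublinearity of $\VV_\rho\circ\TT_\varphi^{\HH^n_\Gamma}$ plus kernel smoothness and short/long decomposition for the far piece (both for the BMO estimate and for the atom estimate), then interpolate to get $L^p$ and invoke the CJRW-type argument for weak-$(1,1)$. The choice of $c_D$, the decomposition $\Z=\SSS\cup\LL$, the telescoping for the long-variation part, and the order of the steps all match the paper's.

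The one place where you gloss over a genuine subtlety is the interpolation step. The parenthetical ``valid for sublinear operators by the standard argument, since the bad and good parts of the Calder\'on-Zygmund decomposition can be treated separately thanks to subadditivity'' conflates two different mechanisms: the Calder\'on-Zygmund decomposition is what powers the $L^1\to L^{1,\infty}$ bound, not the $(H^1,L^1)$--$(L^\infty,BMO)$ interpolation. The latter (Journ\'e's version of the Fefferman--Stein argument) passes through the sharp maximal function $M^\sharp$ and requires, at a key point, that $M^\sharp\circ F$ be sublinear. That is automatic when $F$ is linear, but $F=\VV_\rho\circ\TT_\varphi^{\HH^n_\Gamma}$ is only sublinear, so $M^\sharp\circ F$ need not be. The paper fixes this precisely (its Lemma \ref{lema interpolacio}): since $F$ is sublinear \emph{and positive}, one has $|F(f+g)-F(g)|\leq F(f)$, whence $(M^\sharp\circ F)(f+g)\lesssim (M\circ F)f+(M^\sharp\circ F)g$, which is the substitute inequality Journ\'e's proof actually needs. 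You do implicitly use this positivity when writing $|(\VV_\rho\circ\TT_\varphi^\mu)f_2(x)-(\VV_\rho\circ\TT_\varphi^\mu)f_2(x_0)|\leq(\VV_\rho\circ\TT_\varphi^\mu)(\dots)$ in the BMO step, so the ingredient is present in your write-up; you just need to make it explicit again at the interpolation step rather than asserting the interpolation theorem applies ``by the standard argument.''

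A minor remark: you interpolate first (to obtain $L^2$) and then deduce weak-$(1,1)$ from $L^2$ via the CJRW scheme. This ordering is correct and necessary here, since $L^2$ is not known a priori; had you tried to run the CZ decomposition before having $L^2$, the good-part estimate would be unavailable.
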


We will only give the proof of Theorem \ref{teorema interpolacio} in the case of the $\rho$-variation, because the proof for the oscillation follows by analogous arguments.

\subsection{The operator $\VV_\rho\circ\TT_{\varphi}^{\HH^{n}_{\Gamma}}\,:\,H^1(\HH^{n}_{\Gamma})\to L^1(\HH^{n}_{\Gamma})$ is bounded}\label{teorema interpolacio s1}
Fix a cube $\widetilde D\subset\R^n$ and set $D:=\widetilde D\times\R^{d-n}$. Let $f$ be an atom, i.e., a function defined on $\Gamma$ and such that
\begin{equation}\label{teorema interpolacio H1}
\supp f\subset D,\quad\|f\|_{L^\infty(\HH^n_\Gamma)}\leq\frac{1}{\HH^{n}_{\Gamma}(D)},\quad\text{and}\quad\int f\,d\HH^{n}_{\Gamma}=0.
\end{equation}
We have to prove that $\int(\VV_\rho\circ\TT_{\varphi}^{\HH^{n}_{\Gamma}})f\,d\HH^{n}_{\Gamma}\leq C$, for some constant $C>0$ which does not depend on $f$ or $D$. Since $(\VV_\rho\circ\TT_{\varphi}^{\HH^{n}_{\Gamma}})f(x)$ is well defined and non negative for $f\in L^1(\HH^{n}_{\Gamma})$, the uniform boundedness of $\VV_\rho\circ\TT_{\varphi}^{\HH^{n}_{\Gamma}}$ on atoms yields its boundedness from $H^1(\HH^{n}_{\Gamma})$ to $L^1(\HH^{n}_{\Gamma})$ by standard arguments. We omit the details.

First of all, by H\"{o}lder's inequality, Theorem \ref{teorema localitzacio}, and (\ref{teorema interpolacio H1}),
\begin{equation*}
\begin{split}
\int_{3D}(\VV_\rho\circ\TT_{\varphi}^{\HH^{n}_{\Gamma}})f\,d\HH^{n}_{\Gamma}&\leq\HH^{n}_{\Gamma}(3D)^{1/2}
\bigg(\int_{3D}\big((\VV_\rho\circ\TT_{\varphi}^{\HH^{n}_{\Gamma}})f\big)^2\,d\HH^{n}_{\Gamma}\bigg)^{1/2}\\
&\lesssim\HH^{n}_{\Gamma}(3D)^{1/2}\left(\|f\|^2_{L^\infty(\HH^n_\Gamma)}\HH^{n}_{\Gamma}(3D)\right)^{1/2}\lesssim1.
\end{split}
\end{equation*}
Thus, it remains to prove that $\int_{(3D)^c}(\VV_\rho\circ\TT_{\varphi}^{\HH^{n}_{\Gamma}})f\,d\HH^{n}_{\Gamma}\leq C$.

Given $x\in\Gamma\setminus3D$, let $\{\epsilon_m\}_{m\in\Z}$ be a decreasing sequence of positive numbers (which depends on $x$) such that
\begin{equation}\label{teorema interpolacio H1-1}
\big((\VV_\rho\circ\TT_{\varphi}^{\HH^{n}_{\Gamma}})f(x)\big)^\rho\leq2
\sum_{m\in J}|(K\varphi_{\epsilon_{m+1}}^{\,\epsilon_m}*(f\HH^n_\Gamma))(x)|^{\rho},
\end{equation}
where $J:=\{m\in\Z\,:\,\supp\,\varphi_{\epsilon_{m+1}}^{\,\epsilon_m}(x-\cdot)\,\cap\,\supp f\neq\emptyset\}$.

Set $z_D:=(\widetilde z_D,A(\widetilde z_D))\in D\cap\Gamma$, where $\widetilde z_D$ is the center of $\widetilde D$. By (\ref{teorema interpolacio H1}), we have $\int\varphi_{\epsilon}^{\delta}(x-z_D)K(x-z_D)f(y)\,d\HH^n_\Gamma(y)=0$
for all $0<\epsilon\le\delta$. Thus, given $m\in J$, we can decompose
\begin{equation*}
\begin{split}
(K\varphi_{\epsilon_{m+1}}^{\,\epsilon_m}*(f\HH^n_\Gamma))(x)
&=\int\varphi_{\epsilon_{m+1}}^{\,\epsilon_m}(x-y)\left(K(x-y)-K(x-z_D)\right)f(y)\,d\HH^n_\Gamma(y)\\
&\quad+\int\big(\varphi_{\epsilon_{m+1}}^{\,\epsilon_m}(x-y)-\varphi_{\epsilon_{m+1}}^{\,\epsilon_m}(x-z_D)\big)K(x-z_D)f(y)\,d\HH^n_\Gamma(y),
\end{split}
\end{equation*}
and we obtain $|(K\varphi_{\epsilon_{m+1}}^{\,\epsilon_m}*(f\HH^n_\Gamma))(x)|\leq\|f\|_{L^\infty(\HH^n_\Gamma)}(\Theta1_m+\Theta2_m)$, where
\begin{equation*}
\begin{split}
\Theta1_m:&=\int_D\varphi_{\epsilon_{m+1}}^{\,\epsilon_m}(x-y)\left|K(x-y)-K(x-z_D)\right|\,d\HH^n_\Gamma(y),\\
\Theta2_m:&=\int_D\big|\varphi_{\epsilon_{m+1}}^{\,\epsilon_m}(x-y)-\varphi_{\epsilon_{m+1}}^{\,\epsilon_m}(x-z_D)\big||K(x-z_D)|\,d\HH^n_\Gamma(y).
\end{split}
\end{equation*}

The term $\Theta1_m$ can be easily handled. For $x\in\Gamma\setminus3D$, we have
\begin{equation}\label{teorema interpolacio H1-2}
\Theta1_m\lesssim\ell(D)\,\dist(x,D)^{-n-1}\int_D\varphi_{\epsilon_{m+1}}^{\,\epsilon_m}(x-y)\,d\HH^n_\Gamma(y).
\end{equation}

Let us estimate $\Theta2_m$. Decompose $J=\SSS\cup\LL$, where $\SSS$ and $\LL$ are as in (\ref{teorema interpolacio H1-5}) but replacing $m\in\Z$ by $m\in J$, and as before, let $j(\epsilon_m)$ be the integer such that $\epsilon_m\in I_{j(\epsilon_m)}$.
Using that $x\in\Gamma\setminus3D$ and $\supp f\subset D$, one can easily check that $\LL$ contains a finite number of elements, and this number only depends on $n$ and $d$. Similarly, $\SSS_j=\emptyset$ for all $j\in\Z$ except on a finite number which only depends on $n$ and $d$.

Assume that $m\in\SSS$. With the same computations as those in (\ref{Short eq2}), one can prove that, for all $y\in\supp\varphi_{\epsilon_{m+1}}^{\,\epsilon_m}(x-\cdot)$,
$|\nabla_y\big(\varphi_{\epsilon_{m+1}}^{\,\epsilon_m}(x-y)\big)|
\lesssim2^{j(\epsilon_m)}|\epsilon_m-\epsilon_{m+1}||\widetilde x-\widetilde y|^{-1},$
because $|\widetilde x-\widetilde y|\approx\epsilon_m\approx\epsilon_{m+1}\approx2^{-j(\epsilon_m)}$ for all $y\in\supp\varphi_{\epsilon_{m+1}}^{\,\epsilon_m}(x-\cdot)$. Thus,
\begin{equation}\label{teorema interpolacio H1-3}
\Theta2_m\lesssim\ell(D)^{n+1}\,\dist(x,D)^{-n-1}2^{j(\epsilon_m)}|\epsilon_m-\epsilon_{m+1}|.
\end{equation}

Assume now that $m\in\LL$. It is easy to verify that $|\nabla_y\big(\varphi_{\epsilon_{m+1}}^{\,\epsilon_m}(x-y)\big)|
\lesssim|\widetilde x-\widetilde y|^{-1},$ so $\Theta2_m\lesssim\ell(D)^{n+1}\,\dist(x,D)^{-n-1}$.

Combining this last estimate with (\ref{teorema interpolacio H1-2}), (\ref{teorema interpolacio H1-3}), the fact that $|(K\varphi_{\epsilon_{m+1}}^{\,\epsilon_m}*(f\HH^n_\Gamma))(x)|\leq\|f\|_{L^\infty(\HH^n_\Gamma)}(\Theta1_m+\Theta2_m)$, the remark on $\SSS$ and $\LL$ made just after (\ref{teorema interpolacio H1-2}), (\ref{teorema interpolacio H1-1}), and that $\rho>1$, we have that, for all $x\in\Gamma\setminus3D$,
\begin{equation*}
\begin{split}
(\VV_\rho&\circ\TT_{\varphi}^{\HH^{n}_{\Gamma}})f(x)\lesssim\|f\|_{L^\infty(\HH^n_\Gamma)}\bigg(\sum_{m\in J}\Theta1_m+\sum_{m\in\SSS}\Theta2_m+\sum_{m\in\LL}\Theta2_m\bigg)\\
&\lesssim\frac{\|f\|_{L^\infty(\HH^n_\Gamma)}\ell(D)^{n+1}}{\dist(x,D)^{n+1}}\,
\bigg(\sum_{m\in J}\int_D\frac{\varphi_{\epsilon_{m+1}}^{\,\epsilon_m}(x-y)}{\ell(D)^n}\,d\HH^n_\Gamma(y)+\sum_{m\in \SSS}\frac{|\epsilon_m-\epsilon_{m+1}|}{2^{-j(\epsilon_m)}}+\sum_{m\in \LL}1\bigg)\\
&\lesssim\frac{\|f\|_{L^\infty(\HH^n_\Gamma)}\ell(D)^{n+1}}{\dist(x,D)^{n+1}}.
\end{split}
\end{equation*}
Then, using (\ref{teorema interpolacio H1}) and standard estimates, we conclude that
\begin{equation*}
\begin{split}
\int_{(3D)^c}(\VV_\rho\circ\TT_{\varphi}^{\HH^{n}_{\Gamma}})f(x)\,d\HH^n_\Gamma(x)
\lesssim\int_{(3D)^c}\frac{\|f\|_{L^\infty(\HH^n_\Gamma)}\ell(D)^{n+1}}{\dist(x,D)^{n+1}}\,d\HH^n_\Gamma(x)\lesssim1.
\end{split}
\end{equation*}

\subsection{The operator $\VV_\rho\circ\TT_{\varphi}^{\HH^{n}_{\Gamma}}\,:\,L^\infty(\HH^{n}_{\Gamma})\to BMO(\HH^{n}_{\Gamma})$ is bounded}\label{teorema interpolacio s2}
We have to prove that there exists a constant $C>0$ such that, for any $f\in L^\infty(\HH^{n}_{\Gamma})$ and any cube $\widetilde D\subset\R^n$, there exists some constant $c$ depending on $f$ and $\wit D$ such that $$\int_D\big|(\VV_\rho\circ\TT_{\varphi}^{\HH^{n}_{\Gamma}})f-c\big|\,d\HH^{n}_{\Gamma}\leq C\|f\|_{L^\infty(\HH^n_\Gamma)}\HH^n_\Gamma(D).$$

Let $f$ and $D$ be as above, and set $f_1:=f\chi_{3D}$ and $f_2:=f-f_1$.
First of all, by H\"{o}lder's inequality and Theorem \ref{teorema localitzacio}, we have
\begin{equation*}
\begin{split}
\int_{D}(\VV_\rho\circ\TT_{\varphi}^{\HH^{n}_{\Gamma}})f_1\,d\HH^{n}_{\Gamma}&\leq\HH^{n}_{\Gamma}(D)^{1/2}
\bigg(\int_{3D}\big((\VV_\rho\circ\TT_{\varphi}^{\HH^{n}_{\Gamma}})f_1\big)^2\,d\HH^{n}_{\Gamma}\bigg)^{1/2}\\
&\lesssim\HH^{n}_{\Gamma}(D)^{1/2}\left(\|f_1\|^2_{L^\infty(\HH^n_\Gamma)}\HH^{n}_{\Gamma}(3D)\right)^{1/2}\lesssim\|f\|_{L^\infty(\HH^n_\Gamma)}\HH^{n}_{\Gamma}(D).
\end{split}
\end{equation*}

Notice that $|(\VV_\rho\circ\TT_{\varphi}^{\HH^{n}_{\Gamma}})(f_1+f_2)-(\VV_\rho\circ\TT_{\varphi}^{\HH^{n}_{\Gamma}})f_2|\leq
(\VV_\rho\circ\TT_{\varphi}^{\HH^{n}_{\Gamma}})f_1$, because $\VV_\rho\circ\TT_{\varphi}^{\HH^{n}_{\Gamma}}$ is sublinear and positive. Then, for any $c\in\R$,
\begin{equation*}
\begin{split}
|(\VV_\rho\circ\TT_{\varphi}^{\HH^{n}_{\Gamma}})(f_1+f_2)-c|
&\leq|(\VV_\rho\circ\TT_{\varphi}^{\HH^{n}_{\Gamma}})(f_1+f_2)-(\VV_\rho\circ\TT_{\varphi}^{\HH^{n}_{\Gamma}})f_2|
+|(\VV_\rho\circ\TT_{\varphi}^{\HH^{n}_{\Gamma}})f_2-c|\\
&\leq(\VV_\rho\circ\TT_{\varphi}^{\HH^{n}_{\Gamma}})f_1
+|(\VV_\rho\circ\TT_{\varphi}^{\HH^{n}_{\Gamma}})f_2-c|,
\end{split}
\end{equation*}
hence we are reduced to prove that, for some constant $c\in\R$,
\begin{equation}\label{teorema interpolacio Linf1}
\int_D\big|(\VV_\rho\circ\TT_{\varphi}^{\HH^{n}_{\Gamma}})f_2-c\big|\,d\HH^{n}_{\Gamma}\leq C\|f\|_{L^\infty(\HH^n_\Gamma)}\HH^n_\Gamma(D).
\end{equation}

Set $z_D:=(\widetilde z_D,A(\widetilde z_D))$, where $z_D$ is the center of $\wit D$, and take
$c:=(\VV_\rho\circ\TT_{\varphi}^{\HH^{n}_{\Gamma}})f_2(z_D)$. We may assume that $c<\infty$ (this is the case if, for example, $f$ has compact support). 

Given a family $a:=\{a_\epsilon\}_{\epsilon>0}\subset\C$, define its $\rho$-variation semi-norm to be 
$$\|a\|_{\VV_\rho}=\sup_{\{\epsilon_{m}\}}\bigg(\sum_{m\in\Z}
|a_{\epsilon_{m+1}}-a_{\epsilon_{m}}|^{\rho}\bigg)^{1/\rho},$$
where the supremum is taken over all decreasing
sequences $\{\epsilon_{m}\}_{m\in\Z}\subset (0,\infty)$.
Since $\|\cdot\|_{\VV_\rho}$ satisfies the triangle inequality,
we have $|\|a\|_{\VV_\rho}-\|b\|_{\VV_\rho}|\leq\|a-b\|_{\VV_\rho}$ for all $a:=\{a_\epsilon\}_{\epsilon>0}$ and $b:=\{b_\epsilon\}_{\epsilon>0}$, where $a-b:=\{a_\epsilon-b_\epsilon\}_{\epsilon>0}$. Hence,
\begin{multline*}
\big|(\VV_\rho\circ\TT_{\varphi}^{\HH^{n}_{\Gamma}})f_2(x)-c\,\big|^\rho
\leq \sup_{\{\epsilon_m\searrow0\}}\sum_{m\in\Z}
|(K\varphi_{\epsilon_{m+1}}^{\,\epsilon_m}*(f_2\HH^n_\Gamma))(x)-
(K\varphi_{\epsilon_{m+1}}^{\,\epsilon_m}*(f_2\HH^n_\Gamma))(z_D)|^{\rho}.
\end{multline*}

Given $x\in\Gamma\cap D$, let $\{\epsilon_m\}_{m\in\Z}$ be a decreasing sequence of positive numbers (which depends on $x$) such that
\begin{equation*}
\big|(\VV_\rho\circ\TT_{\varphi}^{\HH^{n}_{\Gamma}})f_2(x)-c\,\big|^\rho\\
\leq 2\sum_{m\in\Z}|(K\varphi_{\epsilon_{m+1}}^{\,\epsilon_m}*(f_2\HH^n_\Gamma))(x)-
(K\varphi_{\epsilon_{m+1}}^{\,\epsilon_m}*(f_2\HH^n_\Gamma))(z_D)|^{\rho}.
\end{equation*}
Notice that $|(K\varphi_{\epsilon_{m+1}}^{\,\epsilon_m}*(f_2\HH^n_\Gamma))(x)-
(K\varphi_{\epsilon_{m+1}}^{\,\epsilon_m}*(f_2\HH^n_\Gamma))(z_D)|\leq\|f\|_{L^\infty(\HH^n_\Gamma)}(\Theta1_m+\Theta2_m)$, where $\Theta1_m$ and $\Theta2_m$ are as in (\ref{teorema localitzacio11}) but replacing $\mu$ by $\HH^n_\Gamma$. It is straightforward to check that the arguments and computations given in subsection \ref{ss localitzacio1} to estimate the two terms in (\ref{teorema localitzacio11}) (see (\ref{teorema localitzacio12}), (\ref{teorema localitzacio13}), and (\ref{teorema localitzacio14})) still hold if we replace $\mu$ by $\HH^n_\Gamma$. Therefore, we have $$\sum_{m\in\Z=\SSS\cup\LL}(\Theta1_m+\Theta2_m)^\rho\lesssim1,$$
which impies that
$\big|(\VV_\rho\circ\TT_{\varphi}^{\HH^{n}_{\Gamma}})f_2(x)-c\,\big|\lesssim\|f\|_{L^\infty(\HH^n_\Gamma)}$ and, by integrating in $D$, gives (\ref{teorema interpolacio Linf1}).

\subsection{The operator $\VV_\rho\circ\TT_{\varphi}^{\HH^{n}_{\Gamma}}
\,:\,L^p(\HH^{n}_{\Gamma})\to L^p(\HH^{n}_{\Gamma})$ is
bounded for all $1<p<\infty$}\label{ss Lp suau}

Since $\VV_\rho\circ\TT_{\varphi}^{\HH^{n}_{\Gamma}}$ is
sublinear, the $L^p$ boundedness follows by applying the results of
subsection \ref{teorema interpolacio s1} and subsection \ref{teorema
interpolacio s2}, and the interpolation theorem between the pairs
$(H^1(\HH^n_\Gamma),L^1(\HH^n_\Gamma))$ and
$(L^\infty(\HH^n_\Gamma),BMO(\HH^n_\Gamma))$ in \cite[page
43]{Journe}.

Given a v-cube $Q\subset\R^d$, set
$m_Q(f):=\HH^{n}_{\Gamma}(Q)^{-1}\int_Q f\,d\HH^{n}_{\Gamma}$, and let $M$ denote the Hardy-Littlewood maximal operator with respect to $\Gamma$, i.e. for $x\in\Gamma$, $M(f)(x):=\sup m_Q(|f|)$, where the supremum is taken over all v-cubes $Q\subset\R^d$ containing $x\in\Gamma$. Let $M^\sharp$ be the sharp maximal operator defined by $M^\sharp(f)(x):=\sup m_Q(|f-m_Q(f)|)$, where the supremum is also taken over all v-cubes $Q\subset\R^d$ containing $x\in\Gamma$.

One comment about the interpolation theorem in \cite[page 43]{Journe} is in order. Given an operator $F$ bounded form $H^1$ to $L^1$ and from $L^\infty$ to $BMO$, in the proof of the interpolation theorem applied to $F$, one uses that $M^\sharp\circ F$ is sublinear (i.e. $(M^\sharp\circ F)(f+g)\leq (M^\sharp\circ F)f+(M^\sharp\circ F)g$ for all functions $f,g$). This is the case when $F$ is linear.
However, $\VV_\rho\circ\TT_{\varphi}^{\HH^{n}_{\Gamma}}$ is not linear, and then it is not clear if $M^\sharp\circ\VV_\rho\circ\TT_{\varphi}^{\HH^{n}_{\Gamma}}$ is sublinear. 
Nevertheless, this problem can be fixed easily using that $\VV_\rho\circ\TT_{\varphi}^{\HH^{n}_{\Gamma}}$ is sublinear and positive (that is $(\VV_\rho\circ\TT_{\varphi}^{\HH^{n}_{\Gamma}})f(x)\geq0$ for all $f$ and $x$), as the following lemma shows.

\begin{lema}\label{lema interpolacio}
Let $F:L^1_{loc}(\HH^{n}_{\Gamma})\to L^1_{loc}(\HH^{n}_{\Gamma})$ be a positive and sublinear operator. Then $(M^\sharp\circ F)(f+g)\lesssim (M\circ F)f+(M^\sharp\circ F)g$ for all functions $f,g$.
\end{lema}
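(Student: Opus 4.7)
The plan is to exploit two basic features of a positive sublinear $F$: the triangle inequality $F(f+g)\le Ff+Fg$, and the ``reverse'' inequality obtained by applying sublinearity to the decomposition $g=(f+g)+(-f)$. Since $F$ is sublinear in the usual sense (so $F(-h)=Fh$, which in the intended application to $F=\VV_\rho\circ\TT_\varphi^\mu$ follows from the linearity of $\TT_\varphi^\mu$ and the evenness of the $\rho$-variation semi-norm), the second application gives $Fg\le F(f+g)+Ff$. Combining these two pointwise bounds yields the key inequality
\begin{equation*}
|F(f+g)(y)-Fg(y)|\le Ff(y)\quad\text{for }\HH^n_\Gamma\text{-a.e.\ }y.
\end{equation*}

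With this in hand, fix $x\in\Gamma$ and any v-cube $Q\ni x$. Using the standard fact that $m_Q(|h-m_Q(h)|)\le 2\,m_Q(|h-c|)$ for every constant $c$, I would choose $c:=m_Q(Fg)$ and estimate
\begin{equation*}
m_Q\bigl(|F(f+g)-m_Q F(f+g)|\bigr)\le 2\,m_Q\bigl(|F(f+g)-Fg|\bigr)+2\,m_Q\bigl(|Fg-m_Q(Fg)|\bigr).
\end{equation*}
The pointwise bound $|F(f+g)-Fg|\le Ff$ controls the first term by $2\,m_Q(Ff)\le 2(M\circ F)f(x)$, while the second term is bounded by $2(M^\sharp\circ F)g(x)$ by definition of $M^\sharp$. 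Taking the supremum over all v-cubes $Q\ni x$ on the left gives
\begin{equation*}
(M^\sharp\circ F)(f+g)(x)\le 2(M\circ F)f(x)+2(M^\sharp\circ F)g(x),
\end{equation*}
which is the asserted inequality.

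There is no substantive obstacle here: the only subtle point is justifying $|F(f+g)-Fg|\le Ff$ from the hypotheses, and once sublinearity is understood to include $F(-h)=Fh$ (as is the case for $\VV_\rho\circ\TT_\varphi^\mu$ and $\OO\circ\TT_\varphi^\mu$), the rest is the elementary $c=m_Q(Fg)$ trick. This replaces the sublinearity of $M^\sharp\circ F$, which fails because $F$ is not linear, and hence restores the hypothesis needed for the interpolation theorem of \cite[page 43]{Journe} to apply to our operators.
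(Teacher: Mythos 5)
Your proof is correct and follows essentially the same route as the paper's: derive the pointwise bound $|F(f+g)-Fg|\le Ff$ from positivity and sublinearity, then control $m_Q|F(f+g)-m_Q(Fg)|$ by $(M\circ F)f(x)+(M^\sharp\circ F)g(x)$ and take the supremum over v-cubes $Q\ni x$. Your explicit invocation of the $m_Q(|h-m_Q h|)\le 2\,m_Q(|h-c|)$ trick is exactly the step the paper leaves implicit (which is why it states the conclusion with $\lesssim$ rather than $\le$).
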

\begin{proof}
If $F$ is sublinear and positive, one has that $|F(f)(x)-F(g)(x)|\leq F(f-g)(x)$ for all functions $f,g\in L^1_{loc}(\HH^{n}_{\Gamma})$.  In particular, $|F(f+g)(x)-F(g)(x)|\leq F(f)(x)$. Then, for $x,y\in Q\cap\Gamma$,
\begin{equation*}
\begin{split}
|F(f+g)(y)-m_Q(Fg)|&\leq|F(f+g)(y)-Fg(y)|+|Fg(y)-m_Q(Fg)|\\
&\leq|Ff(y)|+|Fg(y)-m_Q(Fg)|.
\end{split}
\end{equation*}
Hence, $m_Q|F(f+g)-m_Q(Fg)|\leq m_Q|Ff|+m_Q|Fg-m_Q(Fg)|\leq (M\circ F)f(x)+(M^\sharp\circ F)g(x)$ and, by taking the supremum over all possible v-cubes $Q\ni x$, we conclude $(M^\sharp\circ F)(f+g)(x)\lesssim(M\circ F)f(x)+ (M^\sharp\circ F)g(x).$
\end{proof}

By using Lemma \ref{lema interpolacio} and the fact that $\|M f\|_{L^p(\HH^{n}_{\Gamma})}\lesssim\|M^\sharp f\|_{L^p(\HH^{n}_{\Gamma})}$ for $f\in L^{p_0}(\HH^{n}_{\Gamma})\cap L^{p}(\HH^{n}_{\Gamma})$ and $1\leq p_0\leq p<\infty$, one can reprove Journ\'{e}'s interpolation theorem applied to $\VV_\rho\circ\TT_{\varphi}^{\HH^{n}_{\Gamma}}$ with minor modifications in the original proof.

\subsection{The operator $\VV_\rho\circ\TT_{\varphi}^{\HH^{n}_{\Gamma}}\,:\,L^1(\HH^{n}_{\Gamma})\to L^{1,\infty}(\HH^{n}_{\Gamma})$ is bounded}
By adapting \cite[Theorem B]{CJRW-singular integrals} to our setting and using the smoothnes of the family $\varphi$, one can show that the $L^2(\HH^n_\Gamma)$ boundedness of $\VV_\rho\circ\TT_{\varphi}^{\HH^n_\Gamma}$ yields the boundedness of this operator from $L^{1}(\HH^n_\Gamma)$ to $L^{1,\infty}(\HH^n_\Gamma)$. The interested reader may see \cite{MT}, where a more general result is proved.

\end{document}